\providecommand{\keywords}[1]{\textbf{\textit{Keywords ---}} #1}
\newtheorem{theorem}{Theorem}
\newtheorem{corollary}{Corollary}
\newtheorem{lemma}{Lemma}
\newtheorem{remark}{Remark}
\newtheorem{proposition}{Proposition}
\newtheorem*{remark*}{Remark}
\numberwithin{equation}{section}
\newcommand\omicron{o}
\title{On an Anisotropic Fractional Stefan-Type Problem \\with Dirichlet Boundary Conditions}
\author{Catharine W.K. Lo\thanks{CMAFcIO -- Departamento de Matem\'atica, Faculdade de Ci\^encias, Universidade de Lisboa P-1749-016 Lisboa, Portugal\\ Email address: cwklo@fc.ul.pt} \, and Jos\'e Francisco Rodrigues\thanks{CMAFcIO -- Departamento de Matem\'atica, Faculdade de Ci\^encias, Universidade de Lisboa P-1749-016 Lisboa, Portugal\\ Email address: jfrodrigues@ciencias.ulisboa.pt}}
\date{}
\begin{document}
\maketitle

\begin{abstract}
In this work, we consider the fractional Stefan-type problem in a Lipschitz bounded domain $\Omega\subset\mathbb{R}^d$ with time-dependent Dirichlet boundary condition for the temperature $\vartheta=\vartheta(x,t)$, $\vartheta=g$ on $\Omega^c\times]0,T[$, and initial condition $\eta_0$ for the enthalpy $\eta=\eta(x,t)$, given in $\Omega\times]0,T[$ by \[\frac{\partial \eta}{\partial t} +\mathcal{L}_A^s \vartheta= f\quad\text{ with }\eta\in \beta(\vartheta),\] where $\mathcal{L}_A^s$ is an anisotropic fractional operator defined in the distributional sense by \[\langle\mathcal{L}_A^su,v\rangle=\int_{\mathbb{R}^d}AD^su\cdot D^sv\,dx,\] $\beta$ is a maximal monotone graph, $A(x)$ is a symmetric, strictly elliptic and uniformly bounded matrix, and $D^s$ is the distributional Riesz fractional gradient for $0<s<1$. We show the existence of a unique weak solution with its corresponding weak regularity. We also consider the convergence as $s\nearrow 1$ towards the classical local problem, the asymptotic behaviour as $t\to\infty$, and the convergence of the two-phase Stefan-type problem to the one-phase Stefan-type problem by varying the maximal monotone graph $\beta$.
\end{abstract}

\keywords{Stefan problem, fractional derivatives, boundary value problem, nonlocal diffusion, phase transitions, subdifferential, nonlinear, fractional evolution equation}

\section{Introduction}

The classical Stefan problem, in an open bounded Lipschitz domain $\Omega \ni x=(x_1,\dots,x_d)$ and for time $t\in [0,T]$, can be formulated in $Q_T=\Omega\times]0,T[$ by an evolution equation involving a subdifferential operator \begin{equation}\label{ClassicalStefan}\frac{\partial }{\partial t}\beta(\vartheta)-D\cdot (AD \vartheta) \ni f,\end{equation} where $\vartheta(x,t)$ is the temperature, $D$ is the gradient, $A=A(x)$ is a symmetric, strictly elliptic and bounded matrix, and $\beta$ corresponds to a maximal monotone graph, such that $\beta(r)=b(r)+
\lambda\chi$ for $\chi\in H(\vartheta)$ for the maximal monotone graph $H(r)$ associated with the Heaviside function, i.e. $H(r)=0$ for $r<0$, $H(r)=1$ for $r>0$, $H(0)=[0,1]$, and $b$ a given continuous and strictly increasing function, $\lambda>0$ (see Figure \ref{fig:EnthalpyGraph}) with inverse $\gamma=\beta^{-1}$ satisfying $\lim_{r\to+\infty}\gamma(r)=+\infty$ and $\lim_{r\to-\infty}\gamma(r)=-\infty$ for the two-phase problem and $\gamma(r)=0$ for $r\leq \lambda$ for the one-phase problem. The notation $\beta(\vartheta)$ should be understood as follows: there exists a section $\eta$ of the multifunction $\beta(\vartheta)$ which satisfies the required conditions. In turn, $\vartheta$ is easy to recover from $\eta$ since $\beta^{-1}=\gamma$ is a single-valued mapping. For works on the variational formulation of the classical Stefan problem, see for instance \cite{OleinikStefanPaper}, \cite{KamenomostskajaStefan}, \cite{FriedmanStefanPbTAMS}, Chapter V.9 of \cite{LadyzhenskayaSolonnikovUraltsevaBook}, Section 3.3 of \cite{LionsBookNonlinear}, \cite{Damlamian1977}, \cite{TarziaThesis}, \cite{RodriguesStefanRevisited}, \cite{RodriguesVarMethods1994b} and \cite{VisintinStefan}. 

We can also consider the one-phase problem (I) as the limit of the two-phase problem (II). Indeed, physically, for large Stefan number, the liquid phase only contributes exponentially small terms to the location of the solid–melt interface. Therefore, at times close to complete solidification, the temperature in the liquid essentially vanishes and the two-phase problem reduces to the one-phase problem. For more detailed discussions, see \cite{McCueWuHill20082ph1ph}. See also \cite{BarbaraStoth19972ph1ph} for the one-dimensional case in the classical setting $s=1$.

Here, we consider the corresponding fractional Stefan-type problem, given in $Q_T$ by \begin{equation}\label{FracStefanTheta}\frac{\partial }{\partial t}\beta(\vartheta)+\mathcal{L}_A^s \vartheta\ni f,\end{equation} where $\mathcal{L}_A^s=-D^s\cdot AD^s$ is a non-local operator defined with the distributional Riesz fractional derivatives, with anisotropy given by a measurable matrix $A=A(x)$, which is symmetric, strictly uniformly elliptic and bounded independent of time satisfying \begin{equation}\label{Acoer}a_* |z|^2\leq A(x)z\cdot z\leq a^* |z|^2\end{equation} for almost every $x\in\mathbb{R}^d$ and all $z\in\mathbb{R}^d$. Then, the classical problem \eqref{ClassicalStefan} corresponds to the case $s=1$, i.e. \eqref{FracStefanTheta} with the operator $\mathcal{L}_A^1$, where $D^1=D$.

\begin{figure}[tb]
    \centering
    \includegraphics[width=.45\textwidth,align=c]{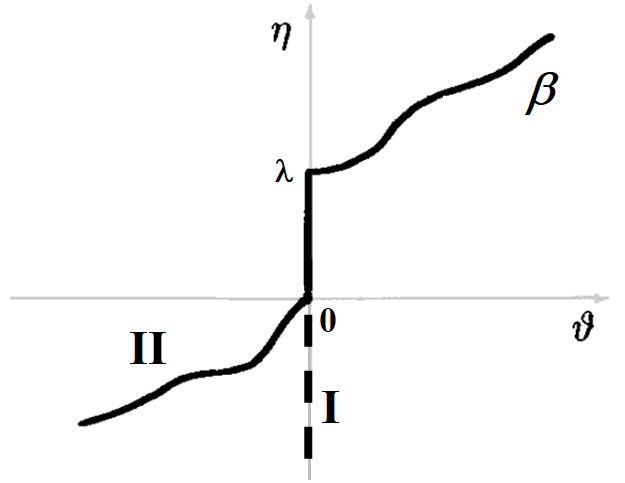}
    \includegraphics[width=.45\textwidth,align=c]{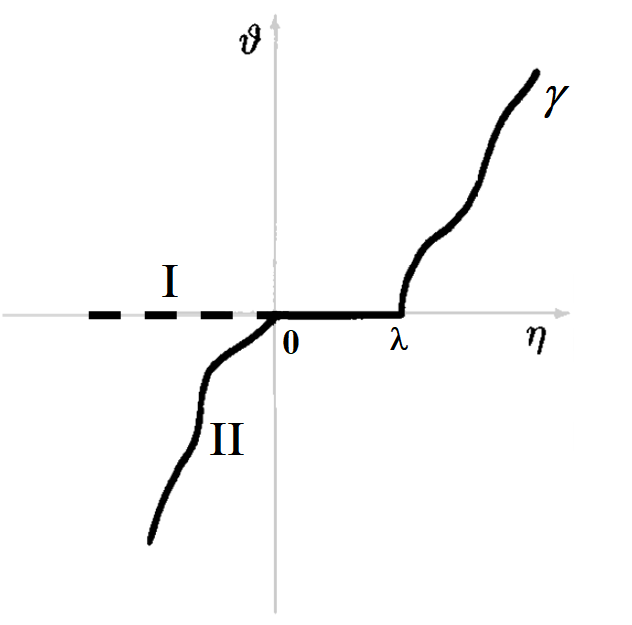}
    \caption{The maximal monotone graphs $\beta$ as the inverse of the continuous monotone functions $\gamma=\beta^{-1}$ in the case of the II phases and the I phase Stefan problems}
    \label{fig:EnthalpyGraph}
\end{figure}

The operator $\mathcal{L}_A^s$ can be viewed as an anisotopic generalisation of the fractional Laplacian. Indeed, following the works of Silhavy \cite{Silhavy}, Shieh-Spector \cite{SS1}--\cite{SS2} and Comi-Stefani \cite{Comi2}--\cite{Comi1}, the Riesz fractional $s$-gradient ($D^s$) and the $s$-divergence ($D^s\cdot$) are defined in integral form for sufficiently regular functions $u$ and vector fields $\bm{\phi}$, respectively, by \[D^su(x):=c_{d,s}\int_{\mathbb{R}^d}\frac{u(x)-u(y)}{|x-y|^{d+s}}\frac{x-y}{|x-y|}\,dy\] and \[D^s\cdot\bm{\phi}(x):=c_{d,s}\int_{\mathbb{R}^d}\frac{\bm{\phi}(x)-\bm{\phi}(y)}{|x-y|^{d+s}}\cdot\frac{x-y}{|x-y|}\,dy,\] where $c_{d,s}=2^s\pi^{-\frac{d}{2}}\frac{\Gamma\left(\frac{d+s+1}{2}\right)}{\Gamma\left(\frac{1-s}{2}\right)}$. Then, in the distributional sense, it is well-known that \[-D^s\cdot D^su=(-\Delta)^su\] for $u\in C_c^\infty(\Omega)$ (see for instance, \cite{SS1}, \cite{Silhavy}), where  $(-\Delta)^s$ is the fractional Laplacian defined as  \[(-\Delta)^su(x)=2^{2s}\pi^{-\frac{d}{2}}\frac{\Gamma\left(\frac{d+2s}{2}\right)}{\Gamma(-s)}\,P.V.\int_{\mathbb{R}^d}\frac{u(y)-u(x)}{|x-y|^{d+2s}}\,dy\quad\text{ for }0<s<1.\] Furthermore, we have the convergence of the fractional derivatives to the classical derivatives as $s\nearrow1$, i.e. \[D^su\to Du,\] as in Comi-Stefani \cite{Comi2}, Bellido et al. \cite{BellidoCuetoMoraCorral2021CVPDEgammaconvg} and Lo-Rodrigues \cite{FracObsRiesz}, for $u\in H^1(\mathbb{R}^d)$ and $u\in H^1_0(\Omega)$.

In this work, we are concerned with the classical fractional Sobolev space $H^s_0(\Omega)$ in a bounded domain $\Omega\subset\mathbb{R}^d$ with Lipschitz boundary, for $0<s<1$, defined as \[H^s_0(\Omega):=\overline{C_c^\infty(\Omega)}^{\norm{\cdot}_{H^s}},\] with \begin{equation}\label{Hs0GraphNorm}\norm{u}_{H^s}^2=\norm{u}_{L^2(\mathbb{R}^d)}^2+\norm{D^su}_{L^2(\mathbb{R}^d)^d}^2,\end{equation} where $u$ is extended by 0 in $\mathbb{R}^d\backslash\Omega$, so that this extension is also in $H^s(\mathbb{R}^d)$. By the classical fractional Poincar\'e inequality (see Lemma \ref{Poincare} below), we shall consider the space $H^s_0(\Omega)$ with the following equivalent norm  \begin{equation}\label{Hs0EquivNorm}\norm{u}_{H^s_0(\Omega)}^2=\norm{D^su}_{L^2(\mathbb{R}^d)^d}^2.\end{equation}

We subsequently denote the dual space of $H^s_0(\Omega)$ by $H^{-s}(\Omega)$ for $0<s\leq1$. Then, by the Sobolev-Poincar\'e inequalities, we have the compact embeddings \[H^s_0(\Omega)\hookrightarrow L^q(\Omega), \quad\  L^{q'}(\Omega)\hookrightarrow H^{-s}(\Omega)=(H^s_0(\Omega))'\] for $1\leq q<2^*$, where $2^*=\frac{2d}{d-2s}$ and $q'>2^\#=\frac{2d}{d+2s}$ when $s<\frac{d}{2}$, and if $d=1$, $2^*=q$ for any finite $q$ and $2^\#=\frac{q}{q-1}$ when $s=\frac{1}{2}$ and $2^*=\infty$ and $2^\#=1$ when $s>\frac{1}{2}$. We recall that those embeddings are continuous also for $q=2^*$ when $s<\frac{d}{2}$ (see for example, Theorem 4.54 of \cite{Demengel}). 

The nonlocal operator $\mathcal{L}_A^s=-D^s\cdot AD^s$ may be defined in the duality sense for $u\in H^s(\mathbb{R}^d)$: \begin{equation}\label{LAdef}\langle\mathcal{L}_A^su,v\rangle:=\int_{\mathbb{R}^d} AD^su\cdot D^sv \quad\forall v\in H^s_0(\Omega),\end{equation} with $v$ extended by zero outside $\Omega$, defining an operator from $H^s(\mathbb{R}^d)$ to $H^{-s}(\Omega)$ since $AD^su\in L^2(\mathbb{R}^d)^d$. Also for $u\in H^s_0(\Omega)$, since we can extend it by 0 outside $\Omega$ to obtain a function in $H^s(\mathbb{R}^d)$, $\mathcal{L}_A^s:H^s_0(\Omega)\to H^{-s}(\Omega)$ can also be represented by \begin{equation}\label{LAdefNotDist}\mathcal{L}_A^su=-D^s\cdot(AD^su).\end{equation}

Given any $\tilde{g}\in H^s(\mathbb{R}^d)$, we introduce $g\in H^s(\mathbb{R}^d)$ defined on the whole space $\mathbb{R}^d$ which satisfies $g|_{\Omega^c}=\tilde{g}$ and is $\mathcal{L}^s_A$-harmonic in $\Omega$, that is to say, we solve the Dirichlet problem with  $g=\tilde{g}\text{ a.e. on }\Omega^c$ for the equation \begin{equation}\label{DirichletBdryCond}\mathcal{L}_A^sg=0\text{ in }H^{-s}(\Omega)\end{equation} in a weak sense, which means
\[\int_{\mathbb{R}^d}AD^sg\cdot D^sv=0\quad\forall v\in H^s_0(\Omega).\]
Note that this is possible and defines $g$ a.e. in $\mathbb{R}^d$ by Lax-Milgram theorem (see Appendix \ref{Sect:DirBdryCondAp}, and also Theorem 1.13 of \cite{SS1}), since $A$ is strictly elliptic and bounded. 

Next, we introduce the enthalpy function \begin{equation}\label{enthalpy}\eta(x,t)\in\beta(\vartheta(x,t))\text{ for almost every }(x,t)\in Q_T\end{equation} with initial condition \begin{equation}\label{enthalpyinitial}\eta(0)=\eta_0\text{ in }H^{-s}(\Omega),\end{equation} and we prescribe a Dirichlet boundary condition \begin{equation}\label{tempdircond}\vartheta(t)=\tilde{g}(t)\text{ a.e. in }\Omega^c=\mathbb{R}^d \setminus\Omega,\text{ for a.e. }t\in]0,T[,\end{equation} for a given $\tilde{g}(t)\in H^s(\mathbb{R}^d)$. For simplicity we shall often describe this Dirichlet condition by saying that $\vartheta(t)-\tilde{g}(t)\in  H^s_0(\Omega)$ for a.e. $t$, which is certainly clear for $s>1/2$, by the trace theorem, and an abuse of notation for $s\leq 1/2$.  Now, for almost every $t\in[0,T]$, introducing $g(t)=\tilde{g}(t)$ in $\Omega^c$  and such that $\mathcal{L}_A^sg(t)=0$ in $\Omega$ in the distributional sense, assuming $f\in L^2(0,T;H^{-s}(\Omega))$, we then have the following weak formulation of the Stefan-type problem when viewed as a single-unknown problem: \begin{equation}\label{FracStefanEta}\left\langle\frac{d\eta}{dt},\xi\right\rangle+\langle \mathcal{L}^s_A(\gamma(\eta)-g),\xi\rangle=\langle f,\xi\rangle,\quad\forall \xi\in L^2(0,T;H^s_0(\Omega))\end{equation} with initial data \eqref{enthalpyinitial}, where $\langle \cdot,\cdot\rangle$ denotes the duality between $L^2(0,T;H^{-s}(\Omega))$ and $L^2(0,T;H^s_0(\Omega))$. Here the Lipschitz graph $\gamma$, which may have flat parts, is defined as the inverse of the maximal monotone graph $\beta$ (see Figure \ref{fig:EnthalpyGraph}). We call the solution $\eta$ of \eqref{FracStefanEta} the \emph{generalised solution} for the enthalpy formulation, by requiring
\[\eta\in H^1(0,T;H^{-s}(\Omega))\cap L^2(Q_T)\text{ with }\gamma(\eta)-g\in L^2(0,T;H^s_0(\Omega)).\]

By the regularity of $\eta$, setting $\beta=b+\lambda H$, we can write $\eta=[b(\vartheta)+\lambda\chi]\in\beta(\vartheta)$ with $\chi\in H(\vartheta)$ a.e. in $Q_T$, i.e. \[0\leq\chi_{\{\vartheta>0\}}\leq\chi\leq 1-\chi_{\{\vartheta<0\}}\leq1\quad\text{ a.e. in }Q_T.\] Suppose we take a more regular test function $\xi$ which additionally satisfies $\xi(T)=0$. Then, using integration by parts in time, we also have a weak variational formulation, with $f\in L^2(0,T;L^2(\Omega))$ and $\eta_0\in L^2(\Omega)$, for the solution $\vartheta=\gamma(\eta)$, i.e. $\vartheta$ is the \emph{weak solution} for the temperature formulation: \begin{equation}\label{SolnSpace}(\vartheta,\chi)\in [L^2(Q_T)]^2,\chi\in H(\vartheta)
\text{ and } \vartheta-g\in  L^2(0,T;H^s_0(\Omega)) \end{equation} satisfy
\begin{equation}\label{FracStefan}-\int_{Q_T}[b(\vartheta)+\lambda\chi]\frac{\partial \xi}{\partial t}+\int_{\mathbb{R}^d\times[0,T]}AD^s\vartheta\cdot D^s\xi=\int_{Q_T}f\xi+\int_\Omega \eta_0\xi(0),\quad\forall \xi\in\Xi^s_T,\end{equation} where \[\Xi^s_T:=\{\xi\in L^2(0,T;H^s_0(\Omega))\cap H^1(0,T;L^2(\Omega)):\xi(T)=0\text{ in }\Omega\}.\]
Compare with \cite{OleinikStefanPaper}, \cite{KamenomostskajaStefan} and Section V.9 of \cite{LadyzhenskayaSolonnikovUraltsevaBook} for the classical case with $s=1$.

\begin{remark}
Note that the variational problem \eqref{FracStefanEta} incorporates the Dirichlet condition \eqref{tempdircond} in the original problem given in \eqref{FracStefanTheta} because of the definition \eqref{DirichletBdryCond}. Since this implies $\int_{\mathbb{R}^d\times[0,T]}AD^sg\cdot D^s\xi=0$ for all $\xi\in\Xi^s_T$, we obtain \eqref{FracStefan} without that term.

Although in general $\eta,\vartheta$ may be nonzero outside $\Omega$, except for the bilinear form $\int_{\mathbb{R}^d\times[0,T]}AD^s\vartheta\cdot D^s\xi$, the other integral terms in the variational formulation \eqref{FracStefan} are only integrated over $\Omega$ in space, since the test function $\xi$ is 0 in $\Omega^c\times]0,T[$.
\end{remark}

Different non-local versions of Stefan-type problems have previously been considered, including in \cite{MR3023404} and \cite{MR3129012} for nonsingular integral kernels, in  \cite{VOLLER20105622}, \cite{MR3400331}, \cite{Ryszewska} and \cite{roscani2021onephase} for the fractional Caputo derivatives, and in \cite{MR3892428}, \cite{MR4013925}, \cite{DelTesoVazquez2Phase}, \cite{DelTesoVazquez1Phase} and \cite{ishige2021refined} for the fractional Laplacian and its nonlocal integral generalization in \cite{athanasopoulos2021twophase}. Stefan-type problems that are fractional in the time derivative have also been considered (see, for instance,  \cite{Tarzia2Phase}, \cite{MR3241498} and \cite{MR3657877}.)

Indeed, when the matrix $A$ is a multiple of the identity matrix, the fractional Stefan-type problem \eqref{FracStefanTheta} reduces to that with the fractional Laplacian as considered in \cite{MR3892428}--\cite{DelTesoVazquez1Phase}. Furthermore, in instances as described in Section 2.3 of \cite{FracObsRiesz} when the fractional operator $\mathcal{L}^s_A$ is replaced with a nonlocal operator $\tilde{\mathcal{L}}^s_a$, corresponding to a Dirichlet form with the kernel $a$ which satisfies some compatibility conditions, \eqref{FracStefanTheta} may also be considered a nonlocal Stefan problem, as considered in \cite{athanasopoulos2021twophase}. However, an equivalence relation between the fractional operator with the matrix $A$ and the nonlocal operator with the kernel $a$ cannot be established in general except in the isotropic homogeneous case (for more details, see Section 2.3 of \cite{FracObsRiesz}), so the two Stefan-type problems with those two operators are not equivalent.

In this paper, we show the existence of a unique solution for the fractional Stefan-type problem with Dirichlet boundary conditions, where the spatial operator is a general anisotropic non-local singular operator of fractional type as given by \eqref{LAdef}, and we keep the classical temperature-enthalpy relation illustrated in Figure \ref{fig:EnthalpyGraph}. This relation in the classical equation \eqref{ClassicalStefan} incorporates, in a generalised form, the free boundary condition relating the balance between the normal velocity of the interface and the jump of the local anisotropic heat flow. In 1-dimension, the extension of the classical free boundary Stefan condition to fractional diffusion, as in the recent paper \cite{Ryszewska} with the fractional Caputo derivative in the nonlocal diffusive term, can be easily made explicit. Similar explicit formulation can be used with the 1-dimensional fractional Riesz spatial derivative when, for each fixed time, the free boundary is a point.

However, in higher dimensions, the Riesz fractional $s$-gradient, as proposed in \cite{Silhavy}, is an appropriate fractional operator maintaining translational and rotational invariance, as well as homogeneity of degree $s$ under isotropic scaling, and so the $\mathcal{L}^s_A$ operator gives a natural and appropriate  anisotropic generalisation of the fractional Laplacian. Keeping the generalised Stefan condition in the evolution equation \eqref{FracStefanTheta} involving the maximal monotone operator $\beta$ is a natural generalisation for the formulation of the anisotropic Stefan problem, extending \cite{DelTesoVazquez2Phase} and \cite{DelTesoVazquez1Phase}, which corresponds to the case where the matrix $A$ is the identity matrix in the unbounded domain. Such an anisotropic operator is coordinate invariant, which makes it more suitable in higher dimensions. Furthermore, the use of this $\mathcal{L}^s_A$ operator allows us to recover the classical Stefan problem when $s=1$, which is in accordance with a requirement of weak continuity from the nonlocal model to the local model, when $s\nearrow1$. However, a main issue remains open in the fractional multidimensional model, namely what is the physical meaning of the Stefan condition due to the lack of a convenient interpretation and definition for the fractional heat flux across the solid-liquid interface.

In Sections \ref{Sect:EnthalpyExist} and \ref{Sect:TempExist}, we employ Hilbertian techniques to show the existence of a generalised enthalpy solution and a weak temperature solution to the initial and boundary value two-phase Stefan-type problem \eqref{FracStefanEta}--\eqref{FracStefan} following the approach of Damlamian \cite{damlamian1976thesis}--\cite{Damlamian1977} for the classical case $s=1$. 

Making use of convergence properties of the fractional derivatives to the classical derivatives when $s\nearrow1$, we show, in Section \ref{Sect:sConvg}, that the solution of the fractional Stefan-type problem converges to the solution of the classical case corresponding to $s=1$. Next, we consider the asymptotic behaviour of the solution as $t\to\infty$ in Section \ref{Sect:AsympBehavT}. Such convergence properties apply to both the two-phase problem, and the one-phase problem, which corresponds to the case of a nonnegative temperature.
The one-phase problem (I) is recovered in Section \ref{Sect:OnePhase} from the two-phase problem (II), when the maximal monotone graph for (II) (see Figure \ref{fig:EnthalpyGraph}) degenerates to that of the one-phase problem (I).

Finally, we complete our paper with three appendices: \ref{Sect:DirBdryCondAp} on the time dependent Dirichlet problem for the fractional operator $\mathcal{L}_A^s$, \ref{Sect:VarIneqEquiv} on the variational inequality formulation for the two-phase and the one-phase problems, and \ref{Sect:Eigen} on the stability of the eigenvalues and eigenfunctions for the operator $\mathcal{L}_A^s$ in $H^s_0(\Omega)$ with respect to $s$ including the convergence $s\nearrow1$.

\section{Existence of the Generalised Enthalpy Solution $\eta$}\label{Sect:EnthalpyExist}

Let $\mathcal{L}=\mathcal{L}^s_A$ be the duality mapping defined by \begin{equation}\label{Fdef}\langle \mathcal{L}u,v\rangle =\int_{\mathbb{R}^d}AD^su \cdot D^sv=:[u,v]_A=(U,V),\end{equation} from $H^s_0(\Omega)$ to $H^{-s}(\Omega)$ with $H^s_0(\Omega)$ identified to a subspace of $L^2(\Omega)$.  Here $\langle \cdot,\cdot\rangle$ is the duality between $H^{-s}(\Omega)$ with $H^s_0(\Omega)$, with $u,v$ extended by zero outside $\Omega$. The equality of the inner product in $H^{-s}(\Omega)$ given by $(\cdot,\cdot)$, with the topology endowed from $\mathcal{L}$, with the equivalent inner product $[\cdot,\cdot]_A$ in $H^s_0(\Omega)$ holds by Riesz representation theorem, with \begin{equation}\label{RieszRep}U=\mathcal{L}u\text{ and }V=\mathcal{L}v\text{ respectively}.\end{equation} This is possible by assumption \eqref{Acoer} and the Poincar\'e inequality, as long as $\Omega$ is bounded. 

In this section, we consider the two-phase problem with \begin{equation}\label{gammacond}\text{ $\gamma$ is Lipschitz with Lipschitz constant $C_\gamma$ such that $\gamma(0)=0$ }\quad\text{ and }\quad \liminf_{|r|\to+\infty}\frac{\gamma(r)}{r}>0.\end{equation}
We prove an existence theorem for the enthalpy $\eta$ similar to the classical case, as given in \cite{Damlamian1977} and \cite{damlamian1976thesis} (See also \cite{VisintinStefan} for further developments). 
To do so, we need a result of Attouch-Damlamian \cite{attouch1975problemes}--\cite{AttouchDamlamian2} in the case where the Hilbert space $H$ is $H^{-s}(\Omega)$.
\begin{proposition}\label{AttouchDamlamianThm}[Theorem 1 of \cite{attouch1975problemes}, and \cite{AttouchDamlamian2}]
Let $(\varphi_t)_{t\in[0,T]}$ be a family of lower semi-continuous convex functions on a Hilbert space $H$. Assume that there exists a function $\mathfrak{a}$ belonging to $BV(0,T)$ such that the following holds: \begin{equation}\label{AttouchDamlamianThmHypo}\varphi_t(V)\leq\varphi_\tau(V)+|\mathfrak{a}(t)-\mathfrak{a}(\tau)|(\varphi_\tau(V)+|V|+1),\quad\forall 0\leq \tau\leq t\leq T,\forall V\in H.\end{equation} Then, for $U_0\in D(\varphi_0)=\{U_0\in H:\varphi_0(U_0)<+\infty\}$ and $F\in L^2(0,T;H)$, there is a unique solution $U \in H^1(0,T;H)$ satisfying \begin{equation}\label{AttouchDalamianThmCauchyEq}\frac{dU}{dt}+\partial\varphi_t(U)=F,\quad U(0)=U_0.\end{equation} Furthermore, the following estimates hold independent of $\varphi$: \begin{equation}\label{AttouchDalamianThmEst1}\norm{U}_{C([0,T];H)}\leq C_1\left(\norm{U_0}_{H},\norm{F}_{L^1(0,T;H)},\norm{\mathfrak{a}}_{BV}\right),\end{equation}\begin{equation}\label{AttouchDalamianThmEst2}\norm{\frac{dU}{dt}}_{L^2(0,T;H)}\leq C_2\left(\norm{U_0}_{H},\varphi_0(U_0),\norm{F}_{L^2(0,T;H)},\norm{\mathfrak{a}}_{BV}\right),\end{equation}\begin{equation}\label{AttouchDalamianThmEst3}\norm{\varphi_t(U)}_{L^\infty(0,T)}\leq C_3\left(\norm{U_0}_{H},\varphi_0(U_0),\norm{F}_{L^2(0,T;H)},\norm{\mathfrak{a}}_{BV}\right).\end{equation}
\end{proposition}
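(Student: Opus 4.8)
The plan is to read \eqref{AttouchDalamianThmCauchyEq} as a subgradient flow and to treat uniqueness by monotonicity and existence by an implicit time-discretization, with hypothesis \eqref{AttouchDamlamianThmHypo} furnishing exactly the control on the temporal variation of the family $(\varphi_t)$ needed for the a priori bounds. Uniqueness is immediate: if $U_1,U_2$ both solve \eqref{AttouchDalamianThmCauchyEq} with $U_1(0)=U_2(0)=U_0$, I would subtract the equations and pair with $U_1-U_2$; since $\partial\varphi_t$ is monotone the subgradient contribution is nonnegative, so $\tfrac12\frac{d}{dt}|U_1-U_2|^2\le 0$ and hence $U_1\equiv U_2$.

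For existence I would first recall that, each $\varphi_t$ being proper, convex and lower semicontinuous, $\partial\varphi_t$ is maximal monotone, so for every $h>0$ the resolvent $(I+h\,\partial\varphi_t)^{-1}$ is a nonexpansive map defined on all of $H$. Partitioning $[0,T]$ into $N$ steps of length $h=T/N$ --- with partition points adapted to the (at most countable) jump set of $\mathfrak{a}$ so that \eqref{AttouchDamlamianThmHypo} can later be summed cleanly --- I would solve the backward Euler scheme
\[\frac{U^{n+1}-U^{n}}{h}+\partial\varphi_{t_{n+1}}(U^{n+1})\ni F^{n+1},\qquad U^0=U_0,\]
each step being uniquely solvable through the resolvent. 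A Moreau--Yosida regularization of each $\varphi_t$, reducing \eqref{AttouchDalamianThmCauchyEq} to a Lipschitz ODE in $H$, would be an equally viable approximation.

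The heart of the argument is the family of a priori estimates, uniform in $N$, reproducing \eqref{AttouchDalamianThmEst1}--\eqref{AttouchDalamianThmEst3}. Pairing the scheme with $U^{n+1}-U^{n}$ and using the subgradient inequality $\langle \partial\varphi_{t_{n+1}}(U^{n+1}),U^{n+1}-U^{n}\rangle\ge\varphi_{t_{n+1}}(U^{n+1})-\varphi_{t_{n+1}}(U^{n})$ gives
\[\frac{|U^{n+1}-U^{n}|^2}{h}+\varphi_{t_{n+1}}(U^{n+1})\le\varphi_{t_{n+1}}(U^{n})+\langle F^{n+1},U^{n+1}-U^{n}\rangle.\]
The decisive move is to trade $\varphi_{t_{n+1}}(U^{n})$ for $\varphi_{t_n}(U^{n})$ via \eqref{AttouchDamlamianThmHypo}, which costs a term $|\mathfrak{a}(t_{n+1})-\mathfrak{a}(t_n)|\,(\varphi_{t_n}(U^{n})+|U^{n}|+1)$. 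Summing telescopes the energy and leaves a discrete Gronwall inequality whose accumulated multiplicative weight is precisely $\|\mathfrak{a}\|_{BV}$; this simultaneously controls $\max_n|U^{n}|$ (the crude bound \eqref{AttouchDalamianThmEst1}, needing only $\|F\|_{L^1}$ and $\|\mathfrak{a}\|_{BV}$), the discrete dissipation $\sum_n|U^{n+1}-U^{n}|^2/h$ bounding $\|dU/dt\|_{L^2}$ as in \eqref{AttouchDalamianThmEst2}, and $\max_n\varphi_{t_n}(U^{n})$ as in \eqref{AttouchDalamianThmEst3}, the latter two now also involving $\varphi_0(U_0)$ and $\|F\|_{L^2}$.

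Finally I would interpolate the $U^{n}$ in time (piecewise linear and piecewise constant), extract from the uniform bounds a subsequence with $U_N\rightharpoonup U$ in $H^1(0,T;H)$ and, by compactness, $U_N\to U$ in $C([0,T];H)$, so that $U(0)=U_0$ and the estimates survive by weak lower semicontinuity. The main obstacle is identifying the weak limit of the discrete subgradients as a section of $\partial\varphi_t(U)$ for a.e. $t$: this rests on the demiclosedness of the maximal monotone graph of $\partial\varphi_t$ together with the lower semicontinuity of $\varphi_t$. The genuine technical difficulty, compared with a Lipschitz-in-time family, is that $\mathfrak{a}\in BV(0,T)$ makes the temporal ``derivative'' of $\varphi_t$ only a measure, so the chain rule $\frac{d}{dt}\varphi_t(U(t))$ underlying the continuous energy balance must be justified in an integrated, measure-theoretic form; controlling this $BV$ time-dependence is exactly where the Attouch--Damlamian refinement lies.
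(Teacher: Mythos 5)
First, note that the paper itself does not prove this proposition: it is imported from Attouch \cite{attouch1975problemes} and Attouch--Damlamian \cite{AttouchDamlamian2}, and the proof route the paper points to (Remark \ref{AttouchDamlamianThmContinuity}) is the original one: replace $\varphi_t$ by its Moreau--Yosida regularization $\varphi_{t,\lambda}$, which turns \eqref{AttouchDalamianThmCauchyEq} into a Lipschitz ODE in $H$, derive \eqref{AttouchDalamianThmEst1} and \eqref{AttouchDalamianThmEst3} by Gronwall using that \eqref{AttouchDamlamianThmHypo} carries over to $\varphi_{t,\lambda}$, obtain \eqref{AttouchDalamianThmEst2} from the time derivative via the absolute continuity of $t\mapsto\varphi_{t,\lambda}(V)$, and pass to the limit $\lambda\searrow0$. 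Your backward-Euler (Rothe) scheme is therefore a genuinely different, though equally classical, route; its core --- uniqueness by monotonicity, the discrete energy inequality obtained by pairing with $U^{n+1}-U^n$, and the use of \eqref{AttouchDamlamianThmHypo} to trade $\varphi_{t_{n+1}}(U^n)$ for $\varphi_{t_n}(U^n)$ at a cost summable against $\norm{\mathfrak{a}}_{BV}$ --- is the correct discrete counterpart of that Gronwall step.

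There is, however, a genuine gap at the limit passage. From boundedness of the interpolants in $H^1(0,T;H)$ you cannot conclude ``by compactness'' that $U_N\to U$ in $C([0,T];H)$: when $H$ is infinite dimensional this embedding is not compact (take $U_N(t)=t\,e_N$ with $(e_N)$ an orthonormal sequence), and, unlike Proposition \ref{BrezisThmAsyptotic} where compactness of the sublevel sets of $\varphi$ is an explicit hypothesis, nothing in the present proposition provides such compactness. What you do get is equicontinuity in the $H$-norm and hence $U_N(t)\rightharpoonup U(t)$ for every $t$. Consequently the identification of the weak $L^2(0,T;H)$ limit $\xi$ of the discrete subgradients as a section of $\partial\varphi_t(U(t))$ must be carried out with weak convergence only, by the Minty/demiclosedness argument in which the missing inequality $\limsup_N\int_0^T\langle\xi_N,U_N\rangle\,dt\leq\int_0^T\langle\xi,U\rangle\,dt$ is extracted from the equation itself: since $\xi_N=F_N-U_N'$, the discrete convexity inequality $\langle U^{n+1}-U^n,U^{n+1}\rangle\geq\frac{1}{2}\left(|U^{n+1}|^2-|U^n|^2\right)$ gives $\int_0^T\langle\xi_N,U_N\rangle\,dt\leq\int_0^T\langle F_N,U_N\rangle\,dt-\frac{1}{2}|U_N(T)|^2+\frac{1}{2}|U_0|^2$, and one concludes by weak lower semicontinuity of the norm at $t=T$, together with a Mosco-type convergence of the piecewise-frozen integrated functionals, which is exactly what \eqref{AttouchDamlamianThmHypo} furnishes. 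A second, smaller defect: your single energy inequality cannot yield \eqref{AttouchDalamianThmEst1} with the stated dependence, since after the discrete Gronwall it necessarily carries $\varphi_0(U_0)$ and $\norm{F}_{L^2(0,T;H)}$; the bound $C_1$, depending only on $\norm{U_0}_{H}$, $\norm{F}_{L^1(0,T;H)}$ and $\norm{\mathfrak{a}}_{BV}$, requires a separate pairing (with $U^{n+1}-V$ for a fixed reference $V\in D(\varphi_0)$, using monotonicity of $\partial\varphi_{t_{n+1}}$ rather than the chain rule) and its own Gronwall loop, as in the quoted source.
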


Making use of this proposition, we can show the following existence result.

\begin{theorem}\label{VarStefanThm1}
Let $f\in L^2(0,T;H^{-s}(\Omega))$ and $\tilde{g}\in BV(0,T;L^2(\Omega))\cap L^2(0,T;H^s(\mathbb{R}^d))$, and define $g$ as in \eqref{DirichletBdryCond}, so $g$ satisfies the same regularity as $\tilde{g}$ (see Appendix \ref{Sect:DirBdryCondAp}). Assume $\eta_0\in L^2(\Omega)$ and $\gamma$ satisfies \eqref{gammacond}. Then there exists a unique generalised enthalpy solution $\eta$ to the problem \eqref{FracStefanEta} with initial condition \eqref{enthalpyinitial}, such that \begin{equation}\label{EnthalpyReg}\eta\in L^\infty(0,T;L^2(\Omega))\cap H^1(0,T;H^{-s}(\Omega))\end{equation} and it satisfies \begin{equation}\label{Estimate1}\norm{\eta}_{C([0,T];H^{-s}(\Omega))}\leq C_1\left(\norm{f}_{L^1(0,T;H^{-s}(\Omega))},\norm{\eta_0}_{H^{-s}(\Omega)},\norm{g}_{BV(0,T;L^2(\Omega))}\right),\end{equation}\begin{equation}\label{Estimate2}\norm{\frac{d\eta}{d t}}_{L^2(0,T;H^{-s}(\Omega))}\leq C_2\left(\norm{f}_{L^2(0,T;H^{-s}(\Omega))},\norm{\eta_0}_{L^2(\Omega)},\norm{g}_{BV(0,T;L^2(\Omega))}\right),\end{equation}
\begin{equation}\label{Estimate3}\norm{\eta}_{L^\infty(0,T;L^2(\Omega))}\leq C_4\left(\norm{f}_{L^2(0,T;H^{-s}(\Omega))},\norm{\eta_0}_{L^2(\Omega)},\norm{g}_{BV(0,T;L^2(\Omega))}\right),\end{equation} where $C_1,C_2$ are exactly the constants from \eqref{AttouchDalamianThmEst1}--\eqref{AttouchDalamianThmEst2}, while $C_4$ depends on \eqref{AttouchDalamianThmEst3} and \eqref{AttouchDamlamianThmHypo}.
Furthermore, the corresponding weak temperature solution $\vartheta=\gamma(\eta)$ satisfies
\begin{equation}\vartheta-g\in L^2(0,T;H^s_0(\Omega))\end{equation} and, in addition, it solves \eqref{FracStefan} when $f\in L^2(0,T;L^2(\Omega))$.
\end{theorem}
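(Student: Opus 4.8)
The plan is to recast the enthalpy equation \eqref{FracStefanEta} as an abstract Cauchy problem governed by a time-dependent subdifferential in the Hilbert space $H=H^{-s}(\Omega)$, equipped with the inner product $(\cdot,\cdot)$ induced by $\mathcal{L}=\mathcal{L}^s_A$ as in \eqref{Fdef}--\eqref{RieszRep}, and then to invoke Proposition \ref{AttouchDamlamianThm}. Writing $\widehat{\gamma}(r)=\int_0^r\gamma(\sigma)\,d\sigma$ for the primitive of $\gamma$ (convex, since $\gamma$ is monotone, with $\widehat{\gamma}(0)=0$), I would define the family
\[
\varphi_t(\eta)=\int_\Omega \widehat{\gamma}(\eta)\,dx-\int_\Omega \eta\,g(t)\,dx \ \text{ if }\eta\in L^2(\Omega),\qquad \varphi_t(\eta)=+\infty\ \text{ otherwise,}
\]
on $H^{-s}(\Omega)$, adding a fixed constant so that $\varphi_t\ge 0$ (which alters neither the subdifferential nor the equation). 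The first step is to identify the subdifferential. Since $\int_\Omega\widehat{\gamma}(\cdot)$ has $L^2$-subdifferential equal to the single-valued Nemytskii map $\gamma(\cdot)$, a direct computation using the Riesz identification $(\xi,W)_{H^{-s}}=\langle W,\mathcal{L}^{-1}\xi\rangle$ shows that $\xi\in\partial\varphi_t(\eta)$ forces $\mathcal{L}^{-1}\xi=\gamma(\eta)-g$, so that
\[
\partial\varphi_t(\eta)=\{\mathcal{L}(\gamma(\eta)-g)\}\quad\text{precisely when }\gamma(\eta)-g\in H^s_0(\Omega),
\]
and $\partial\varphi_t(\eta)=\emptyset$ otherwise. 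Hence $\tfrac{d\eta}{dt}+\partial\varphi_t(\eta)\ni f$ is exactly \eqref{FracStefanEta}.

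Next I would check the structural hypotheses of Proposition \ref{AttouchDamlamianThm}. Convexity of $\varphi_t$ is immediate; for lower semicontinuity on $H^{-s}(\Omega)$ I would use that the coercivity in \eqref{gammacond}, $\liminf_{|r|\to\infty}\gamma(r)/r>0$, yields $\widehat{\gamma}(r)\ge \tfrac{c}{2}r^2-C$, so any sequence of bounded energy is bounded in $L^2(\Omega)$; combined with the continuous embedding $L^2(\Omega)\hookrightarrow H^{-s}(\Omega)$ and the weak $L^2$-lower semicontinuity of the convex integrand, this gives lower semicontinuity in $H^{-s}(\Omega)$. Since $\eta_0\in L^2(\Omega)$ and $g(0)\in L^2(\Omega)$ (as $g\in BV(0,T;L^2(\Omega))\hookrightarrow L^\infty$), we have $\eta_0\in D(\varphi_0)$.

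The main obstacle is verifying the time-regularity condition \eqref{AttouchDamlamianThmHypo}. Only the linear term depends on $t$, so $\varphi_t(V)-\varphi_\tau(V)=\int_\Omega V\,(g(\tau)-g(t))\le \norm{V}_{L^2}\,\norm{g(t)-g(\tau)}_{L^2}$. The delicate point is to absorb $\norm{V}_{L^2}$, which is \emph{stronger} than the ambient $H^{-s}$-norm, into the factor $\varphi_\tau(V)$: the quadratic lower bound on $\widehat{\gamma}$ together with $\varphi_\tau\ge0$ gives $\norm{V}_{L^2}^2\le C(\varphi_\tau(V)+1)$, whence $\norm{V}_{L^2}\le C(\varphi_\tau(V)+|V|+1)$. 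Choosing $\mathfrak{a}(t)=C\,\mathrm{Var}_{[0,t]}(g)$, the pointwise total variation of $g\in BV(0,T;L^2(\Omega))$, so that $C\norm{g(t)-g(\tau)}_{L^2}\le|\mathfrak{a}(t)-\mathfrak{a}(\tau)|$ and $\mathfrak{a}\in BV(0,T)$, then yields \eqref{AttouchDamlamianThmHypo}.

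Proposition \ref{AttouchDamlamianThm} then produces a unique $\eta\in H^1(0,T;H^{-s}(\Omega))$ solving \eqref{FracStefanEta} with $\eta(0)=\eta_0$, and the estimates \eqref{AttouchDalamianThmEst1}--\eqref{AttouchDalamianThmEst2} give \eqref{Estimate1}--\eqref{Estimate2} (noting $\varphi_0(\eta_0)$ depends only on $\norm{\eta_0}_{L^2}$ and $\norm{g}_{BV(0,T;L^2)}$, and $\norm{\mathfrak{a}}_{BV}=C\norm{g}_{BV(0,T;L^2)}$). The bound \eqref{Estimate3} follows from the energy estimate \eqref{AttouchDalamianThmEst3} together with $\norm{\eta(t)}_{L^2}^2\le C(\varphi_t(\eta(t))+1)$, which also gives $\eta\in L^\infty(0,T;L^2(\Omega))$. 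To recover the temperature regularity I would read off $\mathcal{L}(\gamma(\eta)-g)=f-\tfrac{d\eta}{dt}\in L^2(0,T;H^{-s}(\Omega))$ from the equation and invert the isomorphism $\mathcal{L}:H^s_0(\Omega)\to H^{-s}(\Omega)$ to obtain $\vartheta-g=\gamma(\eta)-g\in L^2(0,T;H^s_0(\Omega))$. Finally, under the stronger hypothesis $f\in L^2(0,T;L^2(\Omega))$, integrating \eqref{FracStefanEta} in time against $\xi\in\Xi^s_T$ and using $\eta\in H^1(0,T;H^{-s}(\Omega))\cap L^\infty(0,T;L^2(\Omega))$ to integrate the time term by parts (with $\xi(T)=0$ and $\eta(0)=\eta_0$) produces the weak temperature formulation \eqref{FracStefan}.
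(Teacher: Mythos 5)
Your proposal is correct and follows essentially the same route as the paper's proof: the same time-dependent convex functional on $H^{-s}(\Omega)$ (your $\widehat{\gamma}$ is the paper's primitive $j$), the same characterization of the subdifferential as $\mathcal{L}(\gamma(\cdot)-g)$ restricted to $\gamma(\eta)-g\in H^s_0(\Omega)$, the same verification of hypothesis \eqref{AttouchDamlamianThmHypo} via Cauchy--Schwarz plus the quadratic coercivity of the primitive absorbing $\norm{V}_{L^2}$, the same application of Proposition \ref{AttouchDamlamianThm}, and the same recovery of $\vartheta-g\in L^2(0,T;H^s_0(\Omega))$ and of \eqref{FracStefan} by time integration by parts. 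The only cosmetic difference is that you identify $\partial\varphi_t$ by a direct G\^ateaux-derivative/Nemytskii computation (and make the choice of $\mathfrak{a}$ precise as a running variation), whereas the paper argues through the Fenchel conjugate $\phi_t^*(V)=\int_\Omega j^*(\mathcal{L}^{-1}V+g)$ and pointwise conjugacy.
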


\begin{proof}
We apply Proposition \ref{AttouchDamlamianThm} with $|\mathfrak{a}(t)-\mathfrak{a}(\tau)|=\norm{g(t)-g(\tau)}_{L^2(\Omega)}$ to the following functions $\phi_t$ on the Hilbert space $H^{-s}(\Omega)$ given for each $t\in[0,T]$ by \begin{equation}\label{ConvexFunctional}\phi_t(W)=\begin{cases}\int_\Omega(j(W)-g(t)W)\,dx&\text{ for }W\in L^2(\Omega);\\+\infty &\text{ for }W\in H^{-s}(\Omega)\backslash L^2(\Omega)\end{cases}\end{equation} where $j$ is the primitive of $\gamma$ such that $j(0)=0$. Then, $j$ is quadratic and the domain $D(\phi_t)$ of $\phi_t$ is given by \begin{equation}\label{ConvexFunctionalDomain}D(\phi_t)=\{W\in H^{-s}(\Omega):\phi_t(W)<\infty\}=L^2(\Omega)\end{equation} thanks to the Cauchy-Schwarz inequality and making use of the fact that $W$ lies in $L^2(\Omega)$. 
It is well-known (see for instance, Theorem 17 of \cite{BrezisMMPDE}) that $\phi_t$ is lower semi-continuous, convex, proper and coercive on $H^{-s}(\Omega)$.  Furthermore, there exist constants $\delta$ and $c$ such that \begin{equation}\label{SubdiffuEst}\delta\norm{W}_{L^2(\Omega)}^2\leq\phi_\tau(W)+c|\Omega|+\norm{g(\tau)}_{L^2(\Omega)}\norm{W}_{L^2(\Omega)}.\end{equation} Consequently, by classical results of subdifferentials (see for instance, \cite{BrezisMMPDE} or \cite{kenmochi1981solvability}), the subdifferential $\partial\phi_t$ is a maximal monotone operator of $H^{-s}(\Omega)$.

In fact, the subdifferential $\partial\phi_t$ is characterized as follows: \begin{equation}\label{SubdiffChar}V\in\partial\phi_t(U)\text{ in }H^{-s}(\Omega) \quad\text{ if and only if }\quad U\in L^2(\Omega)\text{ and }\mathcal{L}^{-1}(V)+g=\gamma(U)\text{ a.e. in }\Omega,\end{equation} and we recall from \eqref{ConvexFunctional} that \[\gamma(U)-g=\mathcal{L}^{-1}(V)=v\in H^s_0(\Omega),\] representing the Dirichlet condition in weak form in the trace sense for $s>\frac{1}{2}$ and more generally $\gamma(U)=g$ in $\Omega^c$. Indeed, the characterisation of the subdifferential in terms of the convex conjugate functions involving $(U,V)$ for $U,V\in H^{-s}(\Omega)$ reads as:  \begin{equation}\label{SubdiffConjugate}V\in\partial\phi_t(U)\iff\phi_t(U)+\phi_t^*(V)=(U,V)\end{equation} where $\phi_t^*(V)=\sup_W\{(W,V)-\phi_t(W)\}$. Then for a given $V\in H^{-s}(\Omega)$,  \begin{align*}\phi_t^*(V)&=\sup_{W\in L^2(\Omega)}\{\langle W,\mathcal{L}^{-1}V\rangle-\phi_t(W)\}\\&=\sup_{W\in L^2(\Omega)}\left\{\langle W,\mathcal{L}^{-1}V\rangle-\int_\Omega j(W)-gW\,dx\right\}\\&=\sup_{W\in L^2(\Omega)}\left\{\int_\Omega W(\mathcal{L}^{-1}V+g)-\int_\Omega j(W)\,dx\right\}.\end{align*} 
Set $J(W)=\int_\Omega j(W)$. Recognising the evaluation at the point $\mathcal{L}^{-1}V+g$ with the convex conjugate on $L^2(\Omega)$ of $j(W)$, by well-known results (see for example Lemma 1 of \cite{Brezis1972ConvexIntegrals}, or \cite{Rockafellar1}), we can associate the convex conjugate $J^*(U)$ with $\int_\Omega j^*(U)$, so we have \[\phi_t^*(V)=\int_\Omega j^*(\mathcal{L}^{-1}V+g)\,dx,\] where $j^*$ is the convex conjugate of $j$ on $\mathbb{R}$. From \eqref{SubdiffConjugate}, this means that \[\int_\Omega j(U)-gU+j^*(\mathcal{L}^{-1}V+g)=\langle\mathcal{L}^{-1}V,U\rangle,\]  or \begin{equation}\label{ConvexFunctionalConjIntegral}\int_\Omega j(U)+j^*(\mathcal{L}^{-1}V+g)-U(\mathcal{L}^{-1}V+g)=0.\end{equation} Recall (see for example, \cite{AttouchBook1984}) that for dual convex functions $j$ and $j^*$, \[j(a)+j^*(b)\geq ab\] for all numbers $a,b$. 
Therefore, the integrand in \eqref{ConvexFunctionalConjIntegral} must be non-negative, and so it is almost everywhere zero, i.e. \[j(U)+j^*(\mathcal{L}^{-1}V+g)-U(\mathcal{L}^{-1}V+g)=0.\] This means that the points $U$ and $\mathcal{L}^{-1}V+g$ are conjugated, i.e. $\mathcal{L}^{-1}V+g\in\partial j(U)$. By definition of $j$ as the primitive of $\gamma$, we have $\mathcal{L}^{-1}V+g=\partial j(U)=\gamma(U)$.

Now, we are ready to apply Proposition \ref{AttouchDamlamianThm} in the space $H^{-s}(\Omega)$ with the convex functions $\phi_t$. For $W\in D(\phi_\tau)\cap D(\phi_t)=D(\phi_0)$ since the domain $D(\phi_t)$ as given in \eqref{ConvexFunctionalDomain} is independent of $t$, we have, by \eqref{ConvexFunctional}, \[\phi_t(W)-\phi_\tau(W)=-\int_\Omega W(g(t)-g(\tau)),\] so, by the Cauchy-Schwarz inequality, \begin{equation}\label{ConvexFunctionalContinuity}|\phi_t(W)-\phi_\tau(W)|\leq\norm{g(t)-g(\tau)}_{L^2(\Omega)}\norm{W}_{L^2(\Omega)}.\end{equation} Also, from \eqref{SubdiffuEst}, we have that \begin{equation}\label{LinftyEst}\norm{W}_{L^2(\Omega)}\leq C_5(1+\phi_\tau(W)),\end{equation} where $C_5$ depends only on $\delta$, $|\Omega|$ and $\norm{g}_{BV(0,T;L^2(\Omega))}$. Therefore, with the given regularity of $g$ inherited from $\tilde{g}$, \eqref{AttouchDamlamianThmHypo} is satisfied, hence we can apply Proposition \ref{AttouchDamlamianThm} to solve the Cauchy problem \begin{equation}\label{CauchyProb}\frac{d\eta}{dt}+\partial\phi_t(\eta(t))\ni f(t)\quad\text{ for almost all }t\in[0,T],\quad \eta(0)=\eta_0\text{ in }H^{-s}(\Omega)\end{equation} with $\eta_0\in D(\phi_0)$, i.e. $\eta_0\in L^2(\Omega)$ and $j(\eta_0)\in L^1(\Omega)$, obtaining a unique \[\eta\in H^1(0,T;H^{-s}(\Omega)).\] Moreover, the estimates in Proposition \ref{AttouchDamlamianThm} and \eqref{LinftyEst} give \[\eta\in L^\infty(0,T;L^2(\Omega)).\]

Also, setting $\vartheta=\gamma(\eta)$ gives \[\mathcal{L}(\vartheta-g)\in L^2(0,T;H^{-s}(\Omega)),\] so that \[\vartheta-g\in L^2(0,T;H^s_0(\Omega)),\] and by \eqref{SubdiffChar}, \[\partial\phi_t(\eta(t))=\mathcal{L}(\gamma(\eta)-g).\] Therefore, multiplying \eqref{CauchyProb} by a test function $\xi\in L^2(0,T;H^s_0(\Omega))$, since $\eta\in H^1(0,T;H^{-s}(\Omega))$, we have \[\left\langle\frac{d \eta}{d t},\xi\right\rangle+\langle \mathcal{L}(\gamma(\eta)-g),\xi\rangle=\langle f,\xi\rangle,\] which is \eqref{FracStefanEta}.

Finally, for $\xi\in \Xi^s_T$, we can integrate in time by parts and obtain \eqref{FracStefan}.
\end{proof}

\begin{remark}
To apply Proposition \ref{AttouchDamlamianThm}, we see from \eqref{ConvexFunctionalContinuity} that it is sufficient to require $g\in BV(0,T;L^2(\Omega))$, as in \cite{Damlamian1977}. However, we require additionally that $g\in L^2(0,T;H^s(\mathbb{R}^d))$ so that \eqref{FracStefanEta}--\eqref{FracStefan} is well-defined.
\end{remark}

\begin{remark} We observe that the general result of the above proposition and theorem applies to general maximal monotone operators of subdifferential type with different functions $\gamma$, and so, besides two-phase Stefan-type problems, it applies also to other models including the porous medium equation. 
In fact, different assumptions on $\gamma$ can be used (see page 12 of \cite{damlamian1976thesis} for more details), generalising the case of the assumption \eqref{gammacond}.
\end{remark}

\begin{remark}\label{AttouchDamlamianThmContinuity}
Considering the above proposition in the case where the Hilbert space $H$ is $H^{-s}(\Omega)$, the solution to the Cauchy problem \eqref{AttouchDalamianThmCauchyEq} in $H^{-s}(\Omega)$ with the convex function $\phi_t$, with domain $L^2(\Omega)$, is obtained by considering the approximated problem with the convex function given by its Yosida approximation $\phi_{t,\lambda}(V)=\frac{1}{\lambda}(Id+(Id+\lambda \partial\phi_t)^{-1})V$. Since the estimate \eqref{AttouchDamlamianThmHypo} carries over to $\phi_{t,\lambda}$, we can apply the Gronwall's inequality to obtain the estimates \eqref{AttouchDalamianThmEst1} and \eqref{AttouchDalamianThmEst3} for the solutions to the approximated problem as in the Part 3 of the proof of Theorem 1 of \cite{attouch1975problemes}. Next, we make use of the absolute continuity of the map $t\mapsto \phi_{t,\lambda}(V)$ to apply to the \eqref{AttouchDalamianThmCauchyEq} to obtain the estimate \eqref{AttouchDalamianThmEst2} from the time derivative. Passing to the limit for the approximated problems give the corresponding constants $C_1$, $C_2$ and $C_3$ for the problem \eqref{AttouchDalamianThmCauchyEq} in $H^{-s}(\Omega)$.

Therefore, for $\sigma\leq s\leq1$, recalling that we have the continuity of the inclusions $H^{-\sigma}(\Omega)\subset H^{-s}(\Omega)\subset H^{-1}(\Omega)$ as a consequence of Lemma \ref{sContDepStefan} below, we can bound the $H^{-s}(\Omega)$ norms with the $H^{-\sigma}(\Omega)$ norms, thereby obtaining the solution to \eqref{AttouchDalamianThmCauchyEq} for all $s$, $\sigma\leq s\leq1$ with the corresponding estimates \eqref{AttouchDalamianThmEst1}--\eqref{AttouchDalamianThmEst3} for the constants $C_1$, $C_2$ and $C_3$ depending only on $\sigma$ and independent of $s$.

Since the constant $C_4$ is obtained from \eqref{AttouchDalamianThmEst3} and \eqref{LinftyEst}, similarly, we can once again consider the problem \eqref{FracStefanEta} in $H^{-s}(\Omega)$ for each $s$, $\sigma\leq s\leq1$, and such that the constant $C_4$ in \eqref{Estimate3} may be chosen depending only on $\sigma$ and not on $s$.
\end{remark}

Furthermore, we have the following continuous dependence result (see also Lemma 3.2 of \cite{DamlamianKenmochi1986Asymptotic}).

\begin{proposition}\label{ContDep}
\sloppy Let $\eta$ and $\hat{\eta}$ denote two generalised enthalpy solutions of the fractional Stefan-type problem \eqref{FracStefanEta} corresponding to $(f,g,\eta_0)$ and $(\hat{f},g,\hat{\eta}_0)$ respectively, where $f,\hat{f},g$, and $\eta_0, \hat{\eta}_0$ are as in the assumptions of Theorem \ref{VarStefanThm1}. Then, for any $0\leq t\leq T$: \begin{equation}\label{ContDepResultEnthalpy}\norm{\eta(t)-\hat{\eta}(t)}_{H^{-s}(\Omega)}\leq\norm{\eta_0-\hat{\eta}_0}_{H^{-s}(\Omega)}+\int_0^t\norm{ f(\tau)-\hat{f}(\tau)}_{H^{-s}(\Omega)}\,d\tau\end{equation} and furthermore \begin{equation}\label{ContDepResultEnthalpyTemp}\norm{\vartheta-\hat{\vartheta}}_{L^2(0,T;L^2(\Omega))}\leq\sqrt{C_\gamma}\norm{\eta_0-\hat{\eta}_0}_{H^{-s}(\Omega)}+\sqrt{\frac{3C_\gamma}{2}}\norm{ f-\hat{f}}_{L^1(0,T;H^{-s}(\Omega))}.\end{equation}
\end{proposition}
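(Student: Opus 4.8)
The plan is to derive both estimates from a single energy identity for the difference of the two solutions, obtained by pairing the subtracted equation with the Riesz representative of that difference in $H^{-s}(\Omega)$. Subtracting the two weak formulations \eqref{FracStefanEta} for $\eta$ and $\hat\eta$ and writing $w=\eta-\hat\eta$, the decisive point is that the \emph{same} datum $g$ appears in both problems: the $g$-terms cancel and $\gamma(\eta)-\gamma(\hat\eta)=(\gamma(\eta)-g)-(\gamma(\hat\eta)-g)\in H^s_0(\Omega)$, so that
\[\frac{dw}{dt}+\mathcal{L}\big(\gamma(\eta)-\gamma(\hat\eta)\big)=f-\hat f\quad\text{in }H^{-s}(\Omega).\]
Since $w\in H^1(0,T;H^{-s}(\Omega))$ and $w\in L^\infty(0,T;L^2(\Omega))\subset L^2(0,T;H^{-s}(\Omega))$, the field $\mathcal{L}^{-1}w\in L^2(0,T;H^s_0(\Omega))$ is an admissible test function. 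Testing with it and using \eqref{Fdef}--\eqref{RieszRep} and the symmetry of $A$, I would reduce the two critical terms to $\langle \frac{dw}{dt},\mathcal{L}^{-1}w\rangle=\tfrac12\frac{d}{dt}\|w\|_{H^{-s}(\Omega)}^2$ and $\langle \mathcal{L}(\gamma(\eta)-\gamma(\hat\eta)),\mathcal{L}^{-1}w\rangle=\langle w,\gamma(\eta)-\gamma(\hat\eta)\rangle=\int_\Omega(\eta-\hat\eta)(\gamma(\eta)-\gamma(\hat\eta))\,dx$, the last pairing collapsing to an $L^2$-integral because $w\in L^2(\Omega)$. This produces the energy identity
\[\tfrac12\frac{d}{dt}\|w\|_{H^{-s}(\Omega)}^2+\int_\Omega(\eta-\hat\eta)(\gamma(\eta)-\gamma(\hat\eta))\,dx=\langle f-\hat f,\mathcal{L}^{-1}w\rangle.\]

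For \eqref{ContDepResultEnthalpy} I would discard the nonlinear term, which is nonnegative by the monotonicity of $\gamma$, and estimate the right-hand side by $\|f-\hat f\|_{H^{-s}(\Omega)}\|\mathcal{L}^{-1}w\|_{H^s_0(\Omega)}=\|f-\hat f\|_{H^{-s}(\Omega)}\|w\|_{H^{-s}(\Omega)}$. Dividing by $\|w\|_{H^{-s}(\Omega)}$ and integrating in time (a standard Gronwall-type step, with the usual $\varepsilon$-regularisation to absorb the zeros of $t\mapsto\|w(t)\|_{H^{-s}(\Omega)}$) yields \eqref{ContDepResultEnthalpy} directly.

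For the temperature estimate \eqref{ContDepResultEnthalpyTemp}, the key observation is that $\gamma$ being Lipschitz with constant $C_\gamma$ and monotone gives the pointwise inequality $(\gamma(\eta)-\gamma(\hat\eta))^2\le C_\gamma(\eta-\hat\eta)(\gamma(\eta)-\gamma(\hat\eta))$, so that $\tfrac1{C_\gamma}\|\vartheta-\hat\vartheta\|_{L^2(\Omega)}^2$ is dominated by the nonlinear term in the energy identity. Integrating the identity over $[0,T]$, discarding $-\tfrac12\|w(T)\|_{H^{-s}(\Omega)}^2\le0$, and bounding $\int_0^T\langle f-\hat f,\mathcal{L}^{-1}w\rangle\,dt\le\big(\sup_{[0,T]}\|w\|_{H^{-s}(\Omega)}\big)\,\|f-\hat f\|_{L^1(0,T;H^{-s}(\Omega))}$ via the already-proven \eqref{ContDepResultEnthalpy}, I would reach $\|\vartheta-\hat\vartheta\|_{L^2(0,T;L^2(\Omega))}^2\le C_\gamma\big(\tfrac12 E_0^2+E_0F+F^2\big)$ with $E_0=\|\eta_0-\hat\eta_0\|_{H^{-s}(\Omega)}$ and $F=\|f-\hat f\|_{L^1(0,T;H^{-s}(\Omega))}$. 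Young's inequality $E_0F\le\tfrac12E_0^2+\tfrac12F^2$ reduces this to $C_\gamma(E_0^2+\tfrac32F^2)$, and the subadditivity $\sqrt{a+b}\le\sqrt a+\sqrt b$ then gives exactly the constants $\sqrt{C_\gamma}$ and $\sqrt{3C_\gamma/2}$ of \eqref{ContDepResultEnthalpyTemp}.

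The only genuinely delicate point is the chain rule in time, $\langle \frac{dw}{dt},\mathcal{L}^{-1}w\rangle=\tfrac12\frac{d}{dt}\|w\|_{H^{-s}(\Omega)}^2$; this rests on a Lions--Magenes-type lemma, applicable because $w\in H^1(0,T;H^{-s}(\Omega))$ and $\mathcal{L}^{-1}$ is the \emph{time-independent} Riesz isomorphism realising the inner product that defines $\|\cdot\|_{H^{-s}(\Omega)}$. Beyond this, the argument uses only the monotonicity and Lipschitz character of $\gamma$ and elementary inequalities, so I expect no further obstruction.
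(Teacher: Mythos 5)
Your proof is correct and takes essentially the same route as the paper: testing the subtracted equation with $\mathcal{L}^{-1}(\eta-\hat{\eta})$ is exactly the paper's ``multiplying by $\eta-\hat{\eta}$'' in the $H^{-s}(\Omega)$ inner product, and your treatment of the monotone term via the Lipschitz bound, the Gronwall step (which the paper packages as Lemma \ref{GronwallTypeIneq}), and the Young/subadditivity bookkeeping yield the paper's estimates with identical constants. The only cosmetic differences are that you divide the differential inequality pointwise (with $\varepsilon$-regularisation) where the paper integrates first and then applies its lemma, and you bound the source term by $\sup_t\norm{w}_{H^{-s}(\Omega)}$ where the paper substitutes \eqref{ContDepResultEnthalpy} inside the time integral; both are equivalent.
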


\begin{proof}
Writing $\vartheta=\gamma(\eta)$ and $\hat{\vartheta}=\gamma(\hat{\eta})$, we have in $H^{-s}(\Omega)$, \begin{equation}\label{ContDepEqProb1}\frac{d\eta}{d t}(\tau)=-\mathcal{L}_A^s(\vartheta(\tau)-g(\tau))+f(\tau)\end{equation} and \begin{equation}\label{ContDepEqProb2}\frac{d\hat{\eta}}{dt}(\tau)=-\mathcal{L}_A^s(\hat{\vartheta}(\tau)-g(\tau))+\hat{f}(\tau)\end{equation} for a.e. $\tau\in[0,T]$. Taking the difference of these two equations and multiplying by $\eta-\hat{\eta}$, we have \begin{align*}&\,\frac{d}{d \tau}\norm{\eta(\tau)-\hat{\eta}(\tau)}_{H^{-s}(\Omega)}^2=2\left( \eta'(\tau)-\hat{\eta}'(\tau),\eta(\tau)-\hat{\eta}(\tau)\right)\\=&\,-2\left( \mathcal{L}_A^s(\vartheta(\tau)-\hat{\vartheta}(\tau)-g(\tau)+g(\tau)),\eta(\tau)-\hat{\eta}(\tau)\right)+2\left( f(\tau)-\hat{f}(\tau),\eta(\tau)-\hat{\eta}(\tau)\right)\\=&\,-2\left( \mathcal{L}_A^s(\vartheta(\tau)-\hat{\vartheta}(\tau)),\eta(\tau)-\hat{\eta}(\tau)\right)+2\left( f(\tau)-\hat{f}(\tau),\eta(\tau)-\hat{\eta}(\tau)\right)\end{align*} for a.e. $\tau\in[0,T]$. Recalling by Theorem \ref{VarStefanThm1} that $\vartheta(\tau)-\hat{\vartheta}(\tau)\in H^s_0(\Omega)\subset L^2(\Omega)$ and $\eta(\tau)-\hat{\eta}(\tau)\in L^2(\Omega)$ for a.e. $\tau$, observe that the Lipschitz property of $\gamma$ give \begin{align*}\left(\mathcal{L}_A^s(\vartheta(\tau)-\hat{\vartheta}(\tau)),\eta(\tau)-\hat{\eta}(\tau)\right)&=\int_\Omega\left(\vartheta(\tau)-\hat{\vartheta}(\tau)\right)\left(\eta(\tau)-\hat{\eta}(\tau))\right)\geq\frac{1}{C_\gamma}\norm{\vartheta(\tau)-\hat{\vartheta}(\tau)}_{L^2(\Omega)}^2\end{align*} by \eqref{Fdef} and by identifying the duality $\langle\cdot,\cdot\rangle$ with the $L^2(\Omega)$-inner product in the framework of the Gelfand triple $H^s_0(\Omega)\hookrightarrow L^2(\Omega)\hookrightarrow H^{-s}(\Omega)$. Therefore, we deduce that \begin{equation}\label{ContDepEst0}\frac{d}{d \tau}\norm{\eta(\tau)-\hat{\eta}(\tau)}_{H^{-s}(\Omega)}^2+\frac{2}{C_\gamma}\norm{\vartheta(\tau)-\hat{\vartheta}(\tau)}_{L^2(\Omega)}^2\leq2\left( f(\tau)-\hat{f}(\tau),\eta(\tau)-\hat{\eta}(\tau)\right)\end{equation}  for a.e. $\tau\in[0,T]$. Integrating both sides of \eqref{ContDepEst0} over $[0,t]\subset[0,T]$ for any $T>0$ gives \begin{align}\label{ContDepEst}\begin{split}&\,\norm{\eta(t)-\hat{\eta}(t)}_{H^{-s}(\Omega)}^2+\frac{2}{C_\gamma}\int_0^t\norm{\vartheta(\tau)-\hat{\vartheta}(\tau)}_{L^2(\Omega)}^2\,d\tau\\\leq&\,\norm{\eta_0-\hat{\eta}_0}_{H^{-s}(\Omega)}^2+2\int_0^t\left( f(\tau)-\hat{f}(\tau),\eta(\tau)-\hat{\eta}(\tau)\right)\,d\tau\\\leq&\,\norm{\eta_0-\hat{\eta}_0}_{H^{-s}(\Omega)}^2+2\int_0^t\norm{ f(\tau)-\hat{f}(\tau)}_{H^{-s}(\Omega)}\norm{\eta(\tau)-\hat{\eta}(\tau)}_{H^{-s}(\Omega)}\,d\tau\end{split}\end{align} by the Cauchy-Schwarz inequality. Finally, recalling \eqref{Estimate1}, we apply these estimates and a Gronwall-type inequality (see Lemma \ref{GronwallTypeIneq} below) to obtain the result \eqref{ContDepResultEnthalpy}. 

Furthermore, applying the Cauchy-Schwarz inequality again, we obtain, applying \eqref{ContDepResultEnthalpy} to \eqref{ContDepEst}, \begin{align*}\frac{2}{C_\gamma}\int_0^T\norm{\vartheta(t)-\hat{\vartheta}(t)}_{L^2(\Omega)}^2\,dt&\leq\norm{\eta_0-\hat{\eta}_0}_{H^{-s}(\Omega)}^2+2\norm{\eta_0-\hat{\eta}_0}_{H^{-s}(\Omega)}\int_0^T\norm{ f(t)-\hat{f}(t)}_{H^{-s}(\Omega)}\,dt\\&\quad+2\int_0^T\norm{ f(t)-\hat{f}(t)}_{H^{-s}(\Omega)}\left(\int_0^t\norm{ f(\tau)-\hat{f}(\tau)}_{H^{-s}(\Omega)}\,d\tau\right)\,dt\\&\leq\norm{\eta_0-\hat{\eta}_0}_{H^{-s}(\Omega)}^2+2\norm{\eta_0-\hat{\eta}_0}_{H^{-s}(\Omega)}\norm{ f-\hat{f}}_{L^1(0,T;H^{-s}(\Omega))}\\&\quad+2\int_0^T\norm{ f(t)-\hat{f}(t)}_{H^{-s}(\Omega)}\left(\norm{ f-\hat{f}}_{L^1(0,T;H^{-s}(\Omega))}\right)\,dt\\&\leq2\norm{\eta_0-\hat{\eta}_0}_{H^{-s}(\Omega)}^2+3\norm{ f-\hat{f}}_{L^1(0,T;H^{-s}(\Omega))}^2\end{align*} which gives \eqref{ContDepResultEnthalpyTemp}.
\end{proof}

\begin{lemma}\label{GronwallTypeIneq}
Let $F\in L^1(0,T)$ and $y\in L^\infty(0,T)$ be non-negative functions and $C>0$ a constant such that \[y^2(t)\leq \int_0^tF(\tau)y(\tau)\,d\tau+C\quad\text{ for }t\in]0,T[.\] Then we have \[y(t)\leq\frac{1}{2}\int_0^tF(\tau)\,d\tau+\sqrt{C}\quad\text{ for }t\in[0,T].\]
\end{lemma}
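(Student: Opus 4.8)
The plan is to use the standard comparison (Bihari-type) trick: form an absolutely continuous majorant of the right-hand side, differentiate it, and reduce to a linear differential inequality for its square root. First I would set
\[\Phi(t):=\int_0^t F(\tau)y(\tau)\,d\tau+C,\]
and record two elementary facts. Since $F\in L^1(0,T)$ and $y\in L^\infty(0,T)$, the product $Fy$ lies in $L^1(0,T)$, so $\Phi$ is absolutely continuous on $[0,T]$ with $\Phi'=Fy$ a.e. Moreover, because $C>0$ and $F,y\geq0$, we have $\Phi(t)\geq C>0$ for every $t$, which lets me divide by $\sqrt{\Phi}$ without any vanishing-denominator issue. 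By hypothesis $y^2(t)\leq\Phi(t)$, hence $y(t)\leq\sqrt{\Phi(t)}$.

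Next I would convert the integral inequality into a differential one. Using $F\geq0$ together with $y\leq\sqrt{\Phi}$, I estimate
\[\Phi'(t)=F(t)y(t)\leq F(t)\sqrt{\Phi(t)}\quad\text{for a.e. }t.\]
Since $\Phi$ is absolutely continuous and bounded below by $C>0$, the function $t\mapsto 2\sqrt{\Phi(t)}$ is also absolutely continuous, with derivative $\Phi'(t)/\sqrt{\Phi(t)}$ a.e. Dividing the previous inequality by $\sqrt{\Phi(t)}$ therefore gives
\[\frac{d}{dt}\bigl(2\sqrt{\Phi(t)}\bigr)=\frac{\Phi'(t)}{\sqrt{\Phi(t)}}\leq F(t)\quad\text{for a.e. }t.\]

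Finally I would integrate this bound over $[0,t]$. Using $\Phi(0)=C$, this yields
\[2\sqrt{\Phi(t)}-2\sqrt{C}\leq\int_0^t F(\tau)\,d\tau,\]
so that $\sqrt{\Phi(t)}\leq\sqrt{C}+\tfrac12\int_0^t F(\tau)\,d\tau$. Combining with $y(t)\leq\sqrt{\Phi(t)}$ gives the claimed estimate for all $t\in]0,T[$, and the endpoints follow by continuity. I do not expect a genuine obstacle: the only point requiring care is the justification that $\sqrt{\Phi}$ is absolutely continuous with the expected a.e. derivative, which is precisely where the strict positivity $C>0$ is used. Were $C=0$ permitted, one would instead replace $C$ by $C+\varepsilon$, run the identical argument, and let $\varepsilon\downarrow0$.
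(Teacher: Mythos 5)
Your proof is correct and follows essentially the same route as the paper: both define the majorant $\Phi(t)=\int_0^t F(\tau)y(\tau)\,d\tau+C$, bound $\Phi'\leq F\sqrt{\Phi}$ using the hypothesis $y\leq\sqrt{\Phi}$, and integrate the derivative of $\sqrt{\Phi}$ to conclude. Your version merely makes explicit the absolute-continuity justification and the role of $C>0$, which the paper leaves implicit.
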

\begin{proof}
Let $x(t)=\int_0^tF(\tau)y(\tau)\,d\tau+C$. Then $x'=Fy\leq F\sqrt{x}$. Integrating in time of the relation $\frac{d}{dt}(\sqrt{x})=\frac{x'}{2\sqrt{x}}\leq\frac{F}{2}$, we have the result.
\end{proof}

\begin{remark}\label{EnthalpyContDepGeneral}
In general, for $\gamma\not\equiv\hat{\gamma}$, $g\neq\hat{g}$ and an arbitrary time interval $0\leq t_1<t_2\leq T$, with a similar argument we have the fractional version of the continuous dependence property corresponding to Lemma 3.2 of \cite{DamlamianKenmochi1986Asymptotic} for the classical case $s=1$: \begin{multline}\label{ContDepFull}\norm{\eta(t_2)-\hat{\eta}(t_2)}_{H^{-s}(\Omega)}^2+\frac{2}{C_\gamma}\int_{t_1}^{t_2}\norm{\vartheta(\tau)-\gamma(\hat{\eta})(\tau)}_{L^2(\Omega)}^2\,d\tau+2\int_{t_1}^{t_2}\langle\gamma(\hat{\eta})(\tau)-\hat{\vartheta}(\tau),\eta(\tau)-\hat{\eta}(\tau)\rangle\,d\tau\\\leq\norm{\eta(t_1)-\hat{\eta}(t_1)}_{H^{-s}(\Omega)}^2+2\int_{t_1}^{t_2}\left( f(\tau)-\hat{f}(\tau),\eta(\tau)-\hat{\eta}(\tau)\right)\,d\tau+2\int_{t_1}^{t_2}\int_\Omega (g(\tau)-\hat{g}(\tau))(\eta(\tau)-\hat{\eta}(\tau))\,dx\,d\tau.\end{multline}

As a consequence, we immediately see that if $f=\hat{f}$, $g=\hat{g}$ and $\gamma=\hat{\gamma}$, then \[\norm{\eta(t_2)-\hat{\eta}(t_2)}_{H^{-s}(\Omega)}^2+\frac{2}{C_\gamma}\int_{t_1}^{t_2}\norm{\vartheta(\tau)-\hat{\vartheta}(\tau)}_{L^2(\Omega)}^2\,d\tau\leq\norm{\eta(t_1)-\hat{\eta}(t_1)}_{H^{-s}(\Omega)}^2\] for any $0\leq t_1\leq t_2\leq T$. Furthermore, in this case, the map $t\mapsto\norm{\eta(t)-\hat{\eta}(t)}_{H^{-s}(\Omega)}$ is non-increasing in $t\in[0,T]$ for the same given data. 
\end{remark}

Also as a consequence of \eqref{ContDepFull} with $\gamma=\hat{\gamma}$ and the estimates leading to \eqref{EnthalpyReg} of Theorem \ref{VarStefanThm1}, we have the following corollary:
\begin{corollary}The solution of the variational Stefan-type problem \eqref{FracStefanEta} on the interval $[0,T]$ depends continuously on $f$, $g$ and $\eta_0$ in the following sense: if a sequence $f_m\in L^2(0,T;H^{-s}(\Omega))$, $g_m\in BV(0,T;L^2(\Omega))\cap L^2(0,T;H^{s}(\mathbb{R}^d))$ and $\eta_{0,m}\in L^2(\Omega)$, is such that the $g_m$'s and the $\eta_{0,m}$'s are uniformly bounded in those spaces and $f_m\to f$ in $L^2(0,T;H^{-s}(\Omega))$ and $g_m\to g$ in $L^2(0,T;L^2(\Omega))$ and $\eta_{0,m}\to\eta_0$ in $H^{-s}(\Omega)$, then the solution $\eta_m$ converges to $\eta$  in $L^2(0,T;H^{-s}(\Omega))$ and $\vartheta_m=\gamma(\eta_m)$ converges to $\vartheta=\gamma(\eta)$ in $L^2(0,T;L^2(\Omega))$.\end{corollary}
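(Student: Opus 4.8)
The plan is to specialise the continuous-dependence identity \eqref{ContDepFull} of Remark \ref{EnthalpyContDepGeneral} to the pair of solutions $\eta_m$ (with data $(f_m,g_m,\eta_{0,m})$) and $\eta$ (with data $(f,g,\eta_0)$), which share the \emph{same} graph $\gamma$, and then to show that the right-hand side tends to $0$ by pairing each convergent factor against a uniformly bounded one. Since $\gamma=\hat\gamma$ forces $\gamma(\hat\eta)=\hat\vartheta$, the third integral on the left of \eqref{ContDepFull} vanishes; choosing $t_1=0$, $t_2=t$ leaves, for every $t\in[0,T]$,
\begin{equation*}
\begin{split}
&\norm{\eta_m(t)-\eta(t)}_{H^{-s}(\Omega)}^2+\frac{2}{C_\gamma}\int_0^t\norm{\vartheta_m-\vartheta}_{L^2(\Omega)}^2\,d\tau\\
&\qquad\le\norm{\eta_{0,m}-\eta_0}_{H^{-s}(\Omega)}^2+2\int_0^t(f_m-f,\eta_m-\eta)\,d\tau+2\int_0^t\int_\Omega(g_m-g)(\eta_m-\eta)\,dx\,d\tau.
\end{split}
\end{equation*}

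Next I would extract the uniform bounds. Because $f_m\to f$ in $L^2(0,T;H^{-s}(\Omega))$, the $f_m$ are bounded there, hence in $L^1(0,T;H^{-s}(\Omega))$; and by hypothesis the $\eta_{0,m}$ and $g_m$ are uniformly bounded in $L^2(\Omega)$ and $BV(0,T;L^2(\Omega))$ respectively. Feeding these into estimates \eqref{Estimate1} and \eqref{Estimate3} of Theorem \ref{VarStefanThm1}, the sequence $\eta_m$ is bounded, uniformly in $m$, both in $C([0,T];H^{-s}(\Omega))$ and in $L^\infty(0,T;L^2(\Omega))$, and the same holds for the limit $\eta$. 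Consequently there are constants $M,M'$, independent of $m$ and $t$, with $\norm{\eta_m-\eta}_{H^{-s}(\Omega)}\le M$ and $\norm{\eta_m-\eta}_{L^2(\Omega)}\le M'$.

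Then I would estimate the two cross terms. By Cauchy--Schwarz in $H^{-s}(\Omega)$ followed by Hölder in time,
\[2\int_0^t(f_m-f,\eta_m-\eta)\,d\tau\le 2M\sqrt{T}\,\norm{f_m-f}_{L^2(0,T;H^{-s}(\Omega))}\longrightarrow 0,\]
and by Cauchy--Schwarz in $L^2(\Omega)$ followed by Hölder in time,
\[2\int_0^t\int_\Omega(g_m-g)(\eta_m-\eta)\,dx\,d\tau\le 2M'\sqrt{T}\,\norm{g_m-g}_{L^2(0,T;L^2(\Omega))}\longrightarrow 0.\]
Together with $\norm{\eta_{0,m}-\eta_0}_{H^{-s}(\Omega)}\to 0$, the entire right-hand side tends to $0$. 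Taking the supremum over $t\in[0,T]$ in the displayed inequality then yields $\eta_m\to\eta$ in $C([0,T];H^{-s}(\Omega))$, a fortiori in $L^2(0,T;H^{-s}(\Omega))$, while the remaining left-hand term forces $\vartheta_m=\gamma(\eta_m)\to\vartheta=\gamma(\eta)$ in $L^2(Q_T)$.

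The step requiring the most care, and the real content of the hypotheses, is the matching of norms in the two cross terms. The forcing $g_m$ is assumed to converge only in the weaker norm $L^2(Q_T)$, so it must be paired with $\eta_m-\eta$ measured in $L^2(\Omega)$; this is legitimate precisely because the uniform $BV$ and $L^2$ bounds on $g_m$ and $\eta_{0,m}$ supply, through \eqref{Estimate3}, a uniform $L^\infty(0,T;L^2(\Omega))$ bound on the enthalpies. I would emphasise that, unlike the proof of Proposition \ref{ContDep}, no Gronwall argument is needed here: each occurrence of $\eta_m-\eta$ on the right is controlled by an a priori uniform bound rather than treated as the unknown, so the inequality is read directly as \textit{(convergent factor)} $\times$ \textit{(bounded factor)}.
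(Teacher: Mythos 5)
Your proof is correct and follows exactly the route the paper indicates for this corollary: specialising \eqref{ContDepFull} to $\gamma=\hat\gamma$ and combining it with the uniform bounds from \eqref{Estimate1} and \eqref{Estimate3} of Theorem \ref{VarStefanThm1}, so that each cross term is a convergent factor paired against a uniformly bounded one. The paper leaves this as an immediate consequence without writing out the details; your write-up supplies precisely those details (and even yields the slightly stronger $C([0,T];H^{-s}(\Omega))$ convergence of $\eta_m$).
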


\section{Regularity of the Weak Temperature Solution $\vartheta$}\label{Sect:TempExist}

If we further assume that $g$ has two time derivatives, by the Lipschitz continuity of $\gamma$, we can achieve higher regularity of the weak temperature solution $\vartheta=\gamma(\eta)$ in \eqref{FracStefan}. The proof makes use of the Faedo-Galerkin method, and follows closely Chapter 6 of \cite{damlamian1976thesis}, and we include it here for completeness.

Let $(F_n)_{n\in\mathbb{N}}$ be an increasing set of finite dimensional subspaces of $H^s_0(\Omega)$, such that their union is dense in $H^s_0(\Omega)$, generated by the eigenvectors of the operator $\mathcal{L}^{-1}|_{L^2(\Omega)}$. This is possible since the inverse of $\mathcal{L}$ is compact in $L^2(\Omega)$, by the compactness of the injection $H^s_0(\Omega)\hookrightarrow L^2(\Omega)$. We denote $F_n^*=\mathcal{L}(F_n)\subset H^{-s}(\Omega)$ and set \[\phi_{t,n}=\phi_t+I_{F_n^*} \text{ in }H^{-s}(\Omega),\] where $I_{F_n^*}$ is the indicator function of $F_n^*$, i.e. $I_{F_n^*}=0$ in $F_n^*$, $I_{F_n^*}=+\infty$ elsewhere.

We first recall a result of Attouch (Theorem 1.10 of \cite{AttouchConvergenceConvexFunctionals}), which relates the Mosco convergence of the convex functionals and the convergence of the solutions of the Cauchy problem in the space $H=H^{-s}(\Omega)$. \begin{proposition}\label{AttouchConvergeThm}
Let $H$ be a real Hilbert space with a scalar product and associated norm. Let $\varphi_n\xrightarrow{M}\varphi$ be a set of lower semi-continuous convex functions in $L^2(0,T;H)$ that converges in the Mosco sense in $H$. Denote $\eta_n$ the solutions of the evolution equations \begin{equation}\frac{d\eta_n}{dt}+\partial\varphi_n(\eta_n)\ni f_n,\quad \eta_n(0)=\eta_{0,n}\end{equation} where $f_n\in L^2(0,T;H)$,  $\eta_{0,n}\in\overline{D(\varphi_n)}$. Suppose that $\eta_{0,n}\to \eta_0$ in $H$, $f_n\to f$ in $L^2(0,T;H)$. Assume also that $\frac{d\eta_n}{dt}$ is bounded in $L^2(0,T;H)$. Then there exists a limit $\eta\in H^1(0,T;H)$, such that $\eta_n\rightharpoonup \eta$ weakly in $H^1(0,T;H)$, where $\eta$ is the solution of \begin{equation}\label{LimitCauchyProb}\frac{d\eta}{dt}+\partial\varphi(\eta)\ni f,\quad \eta(0)=\eta_0.\end{equation}
\end{proposition}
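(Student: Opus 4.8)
The plan is to combine uniform energy bounds (for weak compactness), a single energy identity (to tame the troublesome product of two weakly convergent sequences), and the graph-closedness that is built into Mosco convergence (to pass to the limit in the subdifferential inclusion). First I would record the uniform bounds: since $\frac{d\eta_n}{dt}$ is bounded in $L^2(0,T;H)$ by hypothesis and $\eta_n(0)=\eta_{0,n}$ converges, hence is bounded, in $H$, writing $\eta_n(t)=\eta_{0,n}+\int_0^t\eta_n'$ shows that $\eta_n$ is bounded in $H^1(0,T;H)$ and equicontinuous (H\"older-$\tfrac12$) in time. Passing to a subsequence, $\eta_n\rightharpoonup\eta$ weakly in $H^1(0,T;H)$, so $\eta_n'\rightharpoonup\eta'$ weakly in $L^2(0,T;H)$ and $\eta_n(t)\rightharpoonup\eta(t)$ weakly in $H$ for every $t$; in particular $\eta(0)=\eta_0$ because $\eta_{0,n}\to\eta_0$ strongly. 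Setting $w_n:=f_n-\eta_n'$ and $w:=f-\eta'$, the strong convergence $f_n\to f$ together with $\eta_n'\rightharpoonup\eta'$ gives $w_n\rightharpoonup w$ weakly in $L^2(0,T;H)$, and the inclusion reads $w_n(t)\in\partial\varphi_n(\eta_n(t))$ for a.e.\ $t$.

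Next I would lift everything to the integral functionals $\Phi_n(u):=\int_0^T\varphi_n(u(t))\,dt$ and $\Phi(u):=\int_0^T\varphi(u(t))\,dt$ on $L^2(0,T;H)$. By the theory of integral functionals (Rockafellar--Brezis), $\partial\Phi_n(u)=\{\xi:\xi(t)\in\partial\varphi_n(u(t))$ a.e.$\}$, so $w_n\in\partial\Phi_n(\eta_n)$; moreover the Mosco convergence $\varphi_n\xrightarrow{M}\varphi$ in $H$ lifts to $\Phi_n\xrightarrow{M}\Phi$ in $L^2(0,T;H)$. The weak lower-semicontinuity part is immediate from Fatou, while the strong recovery sequences for $\Phi$ are built from the pointwise recovery sequences for $\varphi$ via a measurable-selection and dominated-convergence argument; this step is routine but technical, and I would treat it as such.

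The crux is to verify the one-sided energy inequality
\[\limsup_{n}\int_0^T(w_n,\eta_n)\,dt\ \le\ \int_0^T(w,\eta)\,dt,\]
which is exactly the extra ingredient needed for graph-closedness of Mosco-converging subdifferentials. Here $\int_0^T(f_n,\eta_n)\to\int_0^T(f,\eta)$ (strong times weak), while the delicate term is handled by the exact identity $\int_0^T(\eta_n',\eta_n)\,dt=\tfrac12\|\eta_n(T)\|^2-\tfrac12\|\eta_{0,n}\|^2$: the initial contribution converges by the strong convergence $\eta_{0,n}\to\eta_0$, and $\liminf_n\|\eta_n(T)\|^2\ge\|\eta(T)\|^2$ by weak lower semicontinuity of the norm, so that $\liminf_n\int_0^T(\eta_n',\eta_n)\ge\int_0^T(\eta',\eta)$, which yields the displayed $\limsup$ bound after subtraction. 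I would then invoke the closedness property: for any $v\in L^2(0,T;H)$, choosing a Mosco recovery sequence $v_n\to v$ with $\Phi_n(v_n)\to\Phi(v)$, the inequality $\Phi_n(v_n)\ge\Phi_n(\eta_n)+(w_n,v_n-\eta_n)$ passes to the limit --- using the $\liminf$ part of Mosco for $\Phi_n(\eta_n)$, the strong-weak convergence $(w_n,v_n)\to(w,v)$, and the displayed $\limsup$ for $(w_n,\eta_n)$ --- to give $\Phi(v)\ge\Phi(\eta)+(w,v-\eta)$. Hence $w\in\partial\Phi(\eta)$, i.e.\ $f-\eta'\in\partial\varphi(\eta)$ a.e., so $\eta$ solves \eqref{LimitCauchyProb}.

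Finally, since the limit Cauchy problem for the fixed maximal monotone operator $\partial\varphi$ admits a unique solution by standard theory (with $\eta_0\in\overline{D(\varphi)}$ inherited from $\eta_{0,n}\in\overline{D(\varphi_n)}$ through the Kuratowski--Mosco convergence of the domains), the limit is independent of the extracted subsequence, and a standard argument upgrades subsequential convergence to convergence of the whole sequence. The main obstacle is precisely the passage to the limit in the nonlinear term: establishing the one-sided energy inequality and marrying it with the Mosco recovery sequences. Everything else is uniform estimates and soft weak compactness.
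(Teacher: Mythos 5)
The paper offers no proof of this proposition: it is quoted verbatim as Theorem 1.10 of \cite{AttouchConvergenceConvexFunctionals} and used as a black box, so there is no internal argument to compare yours against. Judged on its own, your proposal reconstructs the standard route and its skeleton is correct: the uniform $H^1(0,T;H)$ bound gives a weakly convergent subsequence with $\eta_n(t)\rightharpoonup\eta(t)$ in $H$ for \emph{every} $t$ and $\eta(0)=\eta_0$; the inclusion is recast as $w_n=f_n-\eta_n'\in\partial\Phi_n(\eta_n)$ for the lifted integral functionals (Rockafellar--Brezis identification); the one-sided energy inequality, obtained from the identity $\int_0^T(\eta_n',\eta_n)\,dt=\tfrac12\|\eta_n(T)\|^2-\tfrac12\|\eta_{0,n}\|^2$ together with weak lower semicontinuity of the norm at $t=T$, is exactly the right substitute for compactness in the pairing of two weak limits; combining it with Mosco recovery sequences yields $w\in\partial\Phi(\eta)$, hence the inclusion a.e.\ in $t$; and uniqueness for the limit problem (pure monotonicity, no extra hypotheses needed) upgrades subsequential to full-sequence convergence.

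Two justifications, however, are genuinely insufficient as written. First, the claim that the $\liminf$ half of $\Phi_n\xrightarrow{M}\Phi$ in $L^2(0,T;H)$ ``is immediate from Fatou'' is false in general: weak convergence in $L^2(0,T;H)$ does \emph{not} imply weak convergence of $u_n(t)$ in $H$ for a.e.\ $t$, so the pointwise Mosco inequality cannot be inserted under Fatou for an arbitrary weakly convergent sequence. Your argument survives only because you apply the $\liminf$ inequality to $\eta_n$ itself, which does converge pointwise weakly thanks to the $H^1$ bound; and even there Fatou requires a uniform integrable minorant, i.e.\ an estimate $\varphi_n(x)\ge -C(1+\|x\|)$ with $C$ independent of $n$. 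That estimate is a lemma, not a triviality: it follows from Mosco convergence of the conjugates $\varphi_n^*\xrightarrow{M}\varphi^*$ (Mosco's bicontinuity theorem), choosing a recovery sequence $y_n\to y_0\in D(\varphi^*)$ so that $\varphi_n(\cdot)\ge\langle y_n,\cdot\rangle-\varphi_n^*(y_n)$ with $\|y_n\|$ and $\varphi_n^*(y_n)$ bounded. You should state this step. Second, the recovery-sequence half of the lifting (given $v$ with $\Phi(v)<\infty$, produce $v_n\to v$ strongly in $L^2(0,T;H)$ with $\Phi_n(v_n)\to\Phi(v)$) is not routine measurable selection; the clean proof goes through pointwise convergence of the Moreau--Yosida regularizations (equivalent to Mosco convergence) plus dominated convergence, or is cited from Attouch's theorems on integral functionals. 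A final small slip: $\eta_0\in\overline{D(\varphi)}$ does not follow from ``Kuratowski--Mosco convergence of the domains'' --- Mosco convergence gives only the lower inclusion $D(\varphi)\subset\liminf_n D(\varphi_n)$, not the upper one --- but you get it for free from the constructed solution, since $\eta(t)\in D(\partial\varphi)$ a.e.\ and $\eta$ is continuous at $t=0$. With these repairs, all available in the very reference the paper cites, your argument is complete.
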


\sloppy With this proposition, our approach would be to determine the subdifferental of $\phi_{t,n}$ and show that they converge to $\phi_t$ in the sense of Mosco. We recall that $\varphi_n\xrightarrow{M}\varphi$ if for every $x\in D(\varphi)$, there exists an approximating sequence of elements $x_n\in D(\varphi_n)$, converging strongly to $x$, such that $\limsup_{n\to \infty }\varphi_n(x_n)\leq \varphi(x)$, and for any subsequence $\varphi_{n_k}$ of $\varphi_n$ such that $x_k\rightharpoonup x$ in $H$, we have $\liminf_{k\to \infty }\varphi_{n_k}(x_k)\geq \varphi(x)$.
Then applying Proposition \ref{AttouchConvergeThm} to our Faedo-Galerkin approximation, and with the additional estimates we obtain from Proposition \ref{AttouchDamlamianThm}, we can pass to the limit to get the additional regularity to the solution for the limit problem. 

For simplicity, we drop the parameter $t$ and consider $t$ to be fixed in $]0,T[$, and we denote $\phi_{t,n}$ as $\phi_n$ and $\phi_t=\phi$. Denote $i$ to be the compact injection of $H^s_0(\Omega)$ into $L^2(\Omega)$ and take $E_n=i(F_n)$ by considering $F_n$ as a subspace of $H^s_0(\Omega)$. It is clear that $i^{-1}$ is an isomorphism between $E_n$ and $F_n$, with norm depending on $n$. 

\begin{proposition}\label{MoscoConvgConvexFuncTemp}
$\phi_n\xrightarrow{M}\phi$ in $H^{-s}(\Omega)$.
\end{proposition}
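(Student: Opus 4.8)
The plan is to verify directly the two conditions in the definition of Mosco convergence recalled just above the statement: a \emph{recovery} condition (for each point of $D(\phi)$ a strongly convergent sequence with $\limsup$ of the energies bounded by $\phi$) and a \emph{weak} $\liminf$ condition, for the functionals $\phi_n=\phi+I_{F_n^*}$ and $\phi$ on $H=H^{-s}(\Omega)$. The whole argument hinges on one observation about the choice of $F_n$: since $F_n=\mathrm{span}\{e_1,\dots,e_n\}$ is generated by the eigenvectors $e_k$ of the compact, self-adjoint, injective, positive operator $\mathcal{L}^{-1}|_{L^2(\Omega)}$, these $e_k$ form an orthonormal basis of $L^2(\Omega)$, and from $\mathcal{L}e_k=\lambda_k e_k$ one gets $F_n^*=\mathcal{L}(F_n)=F_n$. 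This identity is precisely what makes the recovery sequence explicit.

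For the recovery condition I would argue as follows. Take $W\in D(\phi)=L^2(\Omega)$ and expand $W=\sum_k c_k e_k$ in $L^2(\Omega)$, setting $W_n=\sum_{k=1}^n c_k e_k\in F_n=F_n^*$. Then $I_{F_n^*}(W_n)=0$, so $\phi_n(W_n)=\phi(W_n)$, and $W_n\to W$ in $L^2(\Omega)$, hence strongly in $H^{-s}(\Omega)$ by the continuous embedding $L^2(\Omega)\hookrightarrow H^{-s}(\Omega)$. It remains to check $\phi(W_n)\to\phi(W)$. The linear term converges, $\int_\Omega g\,W_n\to\int_\Omega g\,W$, by Cauchy--Schwarz since $g=g(t)\in L^2(\Omega)$. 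For the term $\int_\Omega j(W_n)$ I use that $j$ is quadratic: as $\gamma$ is Lipschitz with $\gamma(0)=0$ one has $|j(a)-j(b)|\le C_\gamma(|a|+|b|)|a-b|$, whence $\bigl|\int_\Omega j(W_n)-\int_\Omega j(W)\bigr|\le C_\gamma\bigl(\|W_n\|_{L^2(\Omega)}+\|W\|_{L^2(\Omega)}\bigr)\|W_n-W\|_{L^2(\Omega)}\to0$. Thus $\phi_n(W_n)=\phi(W_n)\to\phi(W)$, which gives the recovery condition (indeed with equality in the limit). For $W\in H^{-s}(\Omega)\setminus L^2(\Omega)$ there is nothing to prove, since $\phi(W)=+\infty$.

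For the $\liminf$ condition I would proceed abstractly. Because $I_{F_n^*}\ge0$, we have $\phi_n\ge\phi$ pointwise. The functional $\phi$ has already been shown to be proper, convex and strongly lower semicontinuous on $H^{-s}(\Omega)$; being convex and strongly lsc, it is also weakly sequentially lsc (its sublevel sets are convex and strongly closed, hence weakly closed). Therefore, for any $W_n\rightharpoonup W$ weakly in $H^{-s}(\Omega)$ we obtain $\liminf_n\phi_n(W_n)\ge\liminf_n\phi(W_n)\ge\phi(W)$. Together with the recovery condition this yields $\phi_n\xrightarrow{M}\phi$.

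The main obstacle is the recovery step, where one must simultaneously keep the approximants inside $F_n^*$ (so that the indicator vanishes) and obtain convergence of the energies $\phi(W_n)\to\phi(W)$, which requires genuine strong convergence in $L^2(\Omega)$ rather than merely in $H^{-s}(\Omega)$. Reconciling these two demands is exactly where the identity $F_n^*=F_n$ — itself a consequence of choosing $F_n$ to be spanned by eigenvectors of $\mathcal{L}^{-1}$ — does the work, since the $L^2$-orthonormal truncation of $W$ lies in $F_n^*$ and converges in $L^2(\Omega)$. The $\liminf$ condition, by contrast, comes essentially for free from convexity, lower semicontinuity, and the domination $\phi_n\ge\phi$.
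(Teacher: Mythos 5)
Your proposal is correct and follows essentially the same route as the paper: the recovery sequence is the $L^2$-orthogonal truncation onto the span of the first $n$ eigenvectors (which lies in $F_n^*$ exactly because of the eigenvector relation $\mathcal{L}e_k=\lambda_k e_k$, the paper phrasing this as $F_n^*=i^*(E_n)$ rather than $F_n^*=F_n$), and the $\liminf$ condition follows from $\phi_n\geq\phi$ together with weak lower semicontinuity of the convex, strongly lower semicontinuous $\phi$. The only cosmetic difference is that you establish the continuity of $W\mapsto\int_\Omega j(W)$ in $L^2(\Omega)$ by the quantitative Lipschitz bound $|j(a)-j(b)|\leq C_\gamma(|a|+|b|)|a-b|$ instead of the paper's appeal to dominated convergence via the quadratic growth of $j$.
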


\begin{proof}
Denote $i^*$ to be the injection map from $L^2(\Omega)$ to $H^{-s}(\Omega)$. Then $i^*(E_n)=i^*\circ i(F_n)=F_n^*$. Indeed, for an eigenvector $u$ of $\mathcal{L}|_{L^2(\Omega)}$ corresponding to an eigenvalue $\mu$, we have, by definition, $\mathcal{L}u=\mu i^*\circ i(u)$ in $H^{-s}(\Omega)$, hence the result.

For $i^*(U)\in D(\phi)$, we define $i^*(U_n)$, where $U_n=\mathbb{P}_{E_n}U$ is the projection of $U$ into $E_n$ in $L^2(\Omega)$. Since $\overline{\cup E_n}^{L^2}=L^2(\Omega)$ by construction, so $U_n\to U$ in $L^2(\Omega)$, and therefore $i^*(U_n)=\mathbb{P}_{F_n^*}i^*(U)\to i^*(U)$ in $H^{-s}(\Omega)$.

In addition, since $\gamma$ satisfies the growth condition \eqref{gammacond} at $\pm\infty$, its primitive $j$ is quadratic at $\pm\infty$ (so that $j(r)/|r|^2$ and its inverse remain bounded as $r\to\pm\infty$). Therefore, by the dominated convergence theorem, the map $U\mapsto\int_\Omega j(U)$ is continuous in $L^2(\Omega)$, and since $i^*(U_n)\in F_n^*$, so $\phi_n(i^*(U_n))=\phi(i^*(U_n))\to\phi(i^*(U))$.

On the other hand, the sequence $\phi_n$ is decreasing (since $F_n$ is increasing), so we conclude the Mosco convergence of $\phi_n$ to $\phi$ given that $\phi$ is known to be lower semi-continuous.
\end{proof}

Next, we want to obtain a solution of the approximate Cauchy problem for $\eta_n$, making use of Proposition \ref{AttouchDamlamianThm} as in the proof of Theorem \ref{VarStefanThm1}.

\begin{proposition}\label{vnChar} Setting $V=\mathcal{L}v$,
\[V\in\partial\phi_n(U)\text{ in }H^{-s}(\Omega)\text{ if and only if }U\in D(\phi)\cap F_n^*,  \gamma(U)-g\in L^2(\Omega)\text{ and }i(v)+g-\gamma(U)\perp E_n\text{ in }L^2(\Omega).\] 
\end{proposition}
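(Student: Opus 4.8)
The plan is to argue directly from the definition of the subdifferential, exploiting that the indicator term $I_{F_n^*}$ confines all relevant test elements to the finite-dimensional subspace $F_n^*$, on which $\phi$ is smooth; this avoids any constraint-qualification issue that would arise if one tried to invoke the sum rule $\partial(\phi+I_{F_n^*})=\partial\phi+(F_n^*)^\perp$ in the full space $H^{-s}(\Omega)$.

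First I would settle the domain conditions. Since $\phi_n=\phi+I_{F_n^*}$, we have $D(\phi_n)=D(\phi)\cap F_n^*$, so $V\in\partial\phi_n(U)$ forces $U\in D(\phi_n)$. Recalling from the proof of Proposition \ref{MoscoConvgConvexFuncTemp} that $F_n^*=i^*(E_n)$ with $E_n=i(F_n)\subset L^2(\Omega)$, every element of $F_n^*$ is the image under $i^*$ of an $L^2$ function, whence $F_n^*\subset L^2(\Omega)=D(\phi)$ and in particular $U\in L^2(\Omega)$; since $\gamma$ is Lipschitz with $\gamma(0)=0$ and $g(t)\in L^2(\Omega)$, this automatically gives $\gamma(U)-g\in L^2(\Omega)$. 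Throughout I identify each $W\in F_n^*$ with its $L^2$-representative $\widehat W\in E_n$ determined by $W=i^*(\widehat W)$.

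Next I record the defining inequality. Because $I_{F_n^*}(W)=+\infty$ off $F_n^*$, the condition $\phi_n(W)\ge\phi_n(U)+(V,W-U)$ is vacuous there, so $V\in\partial\phi_n(U)$ is equivalent to $U\in F_n^*$ together with
\[\phi(W)-\phi(U)\ge (V,W-U)\qquad\forall W\in F_n^*,\]
that is, $U$ minimises $W\mapsto\phi(W)-(V,W)$ over the subspace $F_n^*$. Two computations carry the proof. First, since $\gamma$ is continuous its primitive $j$ is $C^1$ (indeed $C^{1,1}$) with $j'=\gamma$, so $\phi$ is Gateaux differentiable along $F_n^*$ with $\tfrac{d}{dt}\phi(U+tW)\big|_{t=0}=\int_\Omega(\gamma(U)-g)\widehat W\,dx$. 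Second, the $H^{-s}(\Omega)$ inner product becomes an $L^2$ pairing: writing $V=\mathcal{L}v$ so that $v=\mathcal{L}^{-1}V\in H^s_0(\Omega)$, and $W=\mathcal{L}w=i^*(\widehat W)$, the definition \eqref{Fdef} together with the symmetry of $A$ gives $(V,W)=[v,w]_A=\langle W,v\rangle=\int_\Omega\widehat W\,i(v)\,dx$ by the adjoint relation between $i$ and $i^*$.

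Since $F_n^*$ is a subspace and $\phi-(V,\cdot)$ is convex and differentiable along it, the minimality of $U$ is equivalent to the vanishing of the first variation in every direction $W\in F_n^*$ (test with $U\pm tW$ and let $t\to0$), namely $\int_\Omega(\gamma(U)-g)\widehat W\,dx=\int_\Omega i(v)\widehat W\,dx$ for all $\widehat W\in E_n$, which is exactly $i(v)+g-\gamma(U)\perp E_n$ in $L^2(\Omega)$. For the converse I would run this backwards, using the pointwise convexity inequality $j(W)\ge j(U)+\gamma(U)(W-U)$ integrated over $\Omega$ together with the orthogonality relation to recover $\phi(W)-\phi(U)\ge(V,W-U)$ for all $W\in F_n^*$ (and trivially off $F_n^*$), i.e. $V\in\partial\phi_n(U)$. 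The main obstacle is purely the bookkeeping of the three identifications $H^{-s}(\Omega)\leftrightarrow L^2(\Omega)\leftrightarrow H^s_0(\Omega)$ through $\mathcal{L}$, $i$, $i^*$, so that the abstract pairing $(V,W)$ turns into the stated $L^2$ orthogonality against $E_n$; the analytic content (differentiability of $\phi$ from $j\in C^1$, minimisation over a finite-dimensional subspace) is otherwise routine and parallels the derivation of \eqref{SubdiffChar}.
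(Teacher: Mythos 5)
Your proof is correct, but it takes a genuinely different route from the paper's. The paper proceeds through the convex conjugate: it writes $\phi_n^*=(\phi^*\nabla I^*_{F_n^*})^{**}$ as a regularized inf-convolution, identifies $I^*_{F_n^*}=I_{(F_n^*)^\perp}$, shows (using that $j^*$ is quadratic at infinity, so that $z\mapsto\int_\Omega j^*(g+z)$ is continuous and coercive on $L^2(\Omega)$) that the infimum defining $\psi_n=\phi^*\nabla I_{(F_n^*)^\perp}$ is attained at some $z(v)$ satisfying $z(v)-i(v)\perp E_n$, verifies that $\psi_n$ is lower semi-continuous so that $\psi_n=\phi_n^*$, and only then characterizes $V\in\partial\phi_n(U)$ through the Fenchel equality $\phi_n(U)+\phi_n^*(V)=(U,V)$ and pointwise conjugacy of $j$ and $j^*$, exactly as in \eqref{SubdiffChar}--\eqref{ConvexFunctionalConjIntegral}. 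You instead argue directly from the definition of the subdifferential: the indicator makes the inequality vacuous off $F_n^*$, so $V\in\partial\phi_n(U)$ says precisely that $U$ minimizes $W\mapsto\phi(W)-(V,W)$ over the subspace $F_n^*$; you convert $(V,W)$ into the $L^2$ pairing $\int_\Omega \widehat W\,i(v)$ via \eqref{Fdef} and the adjoint relation between $i$ and $i^*$, obtain the orthogonality from the vanishing first variation (Gateaux differentiability of $\phi$ along $L^2$ directions, which follows from $j\in C^{1,1}$ and dominated convergence), and recover the converse by integrating the pointwise convexity inequality $j(W)\geq j(U)+\gamma(U)(W-U)$ and invoking the orthogonality. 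What each approach buys: the paper's computation yields an explicit formula for $\phi_n^*$ and keeps the argument parallel to the earlier conjugate-function derivation of \eqref{SubdiffChar}, whereas yours is more elementary and sidesteps the delicate technical steps of the paper's route — attainment of the inf-convolution, lower semicontinuity of $\psi_n$, and the biconjugate identification — requiring only the Lipschitz bound on $\gamma$ and finite first-order calculus on a subspace. Both proofs rest on the same two structural facts, which you use correctly: $F_n^*=\mathcal{L}(F_n)=i^*(E_n)$, and the identification of the $H^{-s}(\Omega)$ inner product with an $L^2(\Omega)$ pairing against elements of $E_n$.
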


\begin{proof}
Denote the inf-convolution of two convex functions by the composition operator $\nabla$. Then by definition, we know that the convex conjugate $\phi_n^*=(\phi^*\nabla I^*_{F_n^*})^{**}$, where the double asterisk $^{**}$ stands for the regularized l.s.c. function of $\psi_n=\phi^*\nabla I^*_{F_n^*}$.

Since $F_n^*$ is a subspace of $H^{-s}(\Omega)$, we have $I^*_{F_n^*}=I_{(F_n^*)^\perp}$, where the orthogonality is inherited from the duality between $H^s_0(\Omega)$ and $H^{-s}(\Omega)$. Since $\mathcal{L}(F_n)=F_n^*$, $(F_n^*)^\perp$ is also the orthogonal of $F_n$ in $H^s_0(\Omega)$. We therefore have \[\psi_n(w)=\phi^*\nabla I^*_{F_n^*}(w)=\phi^*\nabla I_{(F_n^*)^\perp}(w)=\inf_{\mathbb{P}_{F_n}(z-w)=0}\int_\Omega j^*(g+z).\] Since $\gamma$ is globally Lipschitz, $\beta$ satisfies the growth assumption \eqref{gammacond} at infinity, so the function $j^*$ is quadratic at infinity and therefore $z\mapsto\int_\Omega j^*(z)$ is continuous in $L^2(\Omega)$. Furthermore, the function $z\mapsto\int_\Omega j^*(z)$ is coercive in $L^2(\Omega)$. 

Henceforth, we deduce that there exists $z=z(v)$ in $L^2(\Omega)$, not necessarily unique, such that $\psi_n(v)=\int_\Omega j^*(g+z(v))$ with $z(v)-i(v)$ in $L^2(\Omega)$, such that $z(v)-i(v)\perp E_n$ in $L^2(\Omega)$. Indeed, $z-v\perp F_n$ in $H^s_0(\Omega)$ so $\langle \mathcal{L}\xi,z-v\rangle=0$ for all $\xi$ in the basis of $F_n$. Hence, taking a vector $\xi$ in that basis, we have $\mathcal{L}\xi=i^*(\mathcal{L}\xi)=\mu i^*\circ i(\xi)$, so $0=\int_\Omega i(\xi)i(z-v)$ which means that $i(z)-i(v)$ is orthogonal to $E_n$ in $L^2(\Omega)$. Since $z(v)$ is the weak limit in $L^2(\Omega)$, considering a minimising sequence of such $i(z)$, we have the result.

Futhermore, using the coercivity of the integral of $j^*$ in $L^2(\Omega)$ again, we see that $\psi_n$ is lower semi-continuous in $H^s_0(\Omega)$, so $\psi_n=\phi_n^*$.

Therefore, $V\in\partial\phi_n(U)$ if and only if $i^*(U)\in F_n^*$, and there exists $z(v)\in L^2(\Omega)$ with $z(v)-i(v)\in E_n^\perp$ and, as in \eqref{ConvexFunctionalConjIntegral}, \[\int_\Omega j(U)+j^*(g+z)=\langle U,g+z\rangle.\] But since $U\in D(\phi)\cap F_n^*\subset L^2(\Omega)$, we can rewrite this as \[\int_\Omega j(U)+j^*(g+z)-U(g+z)=0,\] so, as in the proof of Theorem \ref{VarStefanThm1}, we have that the points $U$ and $g+z$ are conjugated by $j$, thus $z(v)+g=\partial j(U)=\gamma(U)$ a.e. in $\Omega$. The reverse is also clearly true.
\end{proof}

Now, setting $f_n=\mathbb{P}_{E_n}f$ for $f\in L^2(0,T;L^2(\Omega))$ and for $\eta_0\in L^2(\Omega)$, we apply the Proposition \ref{AttouchDamlamianThm} for $(\phi_{t,n})_{t\in[0,T]}$ to solve \begin{equation}\label{ApproxCauchyProb}\frac{d\eta_n}{dt}+\partial\phi_{t,n}(\eta_n)\ni i^*(f_n),\quad \eta_n(0)=\eta_{0,n},\end{equation} where $\eta_{0,n}$ is constructed as in the proof of Proposition \ref{MoscoConvgConvexFuncTemp} such that $\eta_{0,n}\in D(\phi_{0,n})$ with $\eta_{0,n}\to \eta_0\in D(\phi_0)$  strongly in $H^{-s}(\Omega)$ and $\phi_{0,n}(\eta_{0,n})\to\phi_0(\eta_0)$. Then by \eqref{AttouchDalamianThmEst2}, $\frac{d\eta_n}{dt}$ is bounded in $L^2(0,T;H^{-s}(\Omega))$. Moreover, as in Proposition \ref{MoscoConvgConvexFuncTemp}, for all $U\in L^2(0,T;H^{-s}(\Omega))$, we have \[\varphi_n(U):=\int_0^T\phi_{t,n}(U(t))\,dt\xrightarrow{M}\varphi(U):=\int_0^T\phi_t(U(t))\,dt\] in the sense of Mosco.

Therefore, applying Proposition \ref{AttouchConvergeThm}, we conclude that $\eta_n$ converges weakly in $H^1(0,T;H^{-s}(\Omega))$ to the solution $\eta$ of \[\frac{d\eta}{dt}+\partial\phi_t(\eta)\ni i^*(f),\quad \eta(0)=\eta_0.\]

Having obtained the approximation $\eta_n\rightharpoonup \eta$ for the enthalpy $\eta$, we want to pass to the limit in the temperatures $\vartheta_n=\gamma(\eta_n)\to\vartheta=\gamma(\eta)$. To do so, we require some estimates on the derivative of the temperatures.

\begin{proposition}\label{TimeDefEstProp}
Suppose $f\in L^2(0,T;L^2(\Omega))$ and $\tilde{g}\in W^{2,1}(0,T;L^2(\mathbb{R}^d))\cap L^\infty(0,T;H^s(\mathbb{R}^d))$. Assume $\eta_0\in L^2(\Omega)$ and, setting $\vartheta(0)=\gamma(\eta_0)$, assume $\vartheta(0)-g(0)\in H^s_0(\Omega)$. Denote by $\eta_n\in H^1(0,T;F_n^*)$, and $\tilde{\eta}_n\in H^1(0,T;E_n)$ such that $\eta_n=i^*(\tilde{\eta}_n)$, the generalised solution associated to the approximate Cauchy problem \eqref{ApproxCauchyProb}, corresponding to the Faedo-Galerkin method as described above. Then, the integral \begin{equation}\label{TimeDefEstPropBounds}\int_0^T\int_\Omega\left|\frac{\partial \gamma(\tilde{\eta}_n)}{\partial t}\right|^2\leq C_6,\quad \norm{\mathbb{P}_{F_n}(\gamma(\tilde{\eta}_n)-g)}_{L^\infty(0,T;H^s_0(\Omega))}\leq C_7\end{equation} is uniformly bounded in $n$, with the bounds $C_6$, $C_7$ dependent on the Lipschitz constant $C_\gamma$ and the given data $f,g,\eta_0$.
\end{proposition}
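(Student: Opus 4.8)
The goal is a pair of $n$-uniform bounds: on $\partial_t\gamma(\tilde\eta_n)$ in $L^2(Q_T)$ and on $\mathbb{P}_{F_n}(\gamma(\tilde\eta_n)-g)$ in $L^\infty(0,T;H^s_0(\Omega))$. The natural energy quantity to differentiate is the functional $\Phi_n(t)=\tfrac12[\mathbb{P}_{F_n}(\gamma(\tilde\eta_n(t))-g(t)),\mathbb{P}_{F_n}(\gamma(\tilde\eta_n(t))-g(t))]_A$, i.e. the $A$-Dirichlet energy of the temperature deficit, which controls exactly the $H^s_0$-norm appearing in $C_7$. The plan is to differentiate the approximate equation \eqref{ApproxCauchyProb} in time and test against $\partial_t\gamma(\tilde\eta_n)$ (or equivalently against the temperature's time derivative), so that the two desired quantities appear together: the dissipation term produces $\int_\Omega|\partial_t\gamma(\tilde\eta_n)|^2$ after using the Lipschitz bound $|\partial_t\gamma(\tilde\eta_n)|\le C_\gamma|\partial_t\tilde\eta_n|$ together with monotonicity $\partial_t\gamma(\tilde\eta_n)\,\partial_t\tilde\eta_n\ge \tfrac1{C_\gamma}|\partial_t\gamma(\tilde\eta_n)|^2$, while the elliptic term contributes $\tfrac{d}{dt}\Phi_n(t)$. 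This is the standard Faedo--Galerkin differentiated-in-time estimate, carried out in $H^{-s}$ rather than $L^2$.

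Concretely, I would start from \eqref{ApproxCauchyProb} written as $\tfrac{d\eta_n}{dt}+\mathcal{L}\,\mathbb{P}_{F_n}(\gamma(\tilde\eta_n)-g)=i^*(f_n)$ in $F_n^*$ (using Proposition \ref{vnChar} to identify $\partial\phi_{t,n}(\eta_n)=\mathcal{L}\,\mathbb{P}_{F_n}(\gamma(\tilde\eta_n)-g)$). Since everything lives in the finite-dimensional $F_n$, I may legitimately differentiate in $t$; this is where the hypothesis $\tilde g\in W^{2,1}(0,T;L^2)$ enters, producing a forcing term $\partial_t g$ whose contribution is handled by Cauchy--Schwarz and absorbed. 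Pairing the differentiated equation with $\partial_t\tilde\eta_n$ in $L^2(\Omega)$ and using the duality $\langle \mathcal{L}w,\tilde\eta\rangle=[w,\tilde\eta]_A$ yields an identity of the form
\[
\tfrac{1}{C_\gamma}\int_\Omega\Big|\tfrac{\partial\gamma(\tilde\eta_n)}{\partial t}\Big|^2+\tfrac{d}{dt}\Phi_n(t)\le \big|(\,\partial_t i^*(f_n),\partial_t\tilde\eta_n)\big|+\big|[\,\partial_t g,\cdot]_A\big|,
\]
after which integration over $[0,T]$, Young's inequality, and absorption of the dissipation term give both bounds simultaneously.

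The main obstacle is controlling the \emph{initial} energy $\Phi_n(0)$ independently of $n$: integrating $\tfrac{d}{dt}\Phi_n$ produces $\Phi_n(0)=\tfrac12\|\mathbb{P}_{F_n}(\gamma(\eta_{0,n})-g(0))\|^2_{H^s_0(\Omega)}$, and one must show this does not blow up as the Galerkin spaces grow. This is exactly the role of the standing hypothesis $\vartheta(0)-g(0)=\gamma(\eta_0)-g(0)\in H^s_0(\Omega)$: since $\mathbb{P}_{F_n}$ is an orthogonal projection in the $[\cdot,\cdot]_A$ inner product on $H^s_0(\Omega)$, it is a contraction for the $H^s_0$-norm, so $\Phi_n(0)\le \tfrac12\|\gamma(\eta_0)-g(0)\|^2_{H^s_0(\Omega)}$ uniformly, provided $\eta_{0,n}$ is chosen so that $\gamma(\tilde\eta_{0,n})-g(0)$ approximates $\gamma(\eta_0)-g(0)$ in $H^s_0$; the compatibility of the approximation $\eta_{0,n}$ built in Proposition \ref{MoscoConvgConvexFuncTemp} must be invoked here. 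A secondary technical point is justifying the differentiation in time and the chain rule $\partial_t\gamma(\tilde\eta_n)=\gamma'(\tilde\eta_n)\partial_t\tilde\eta_n$ for the merely Lipschitz $\gamma$; I would either regularise $\gamma$ and pass to the limit, or use the monotone-operator identity $\int_{t_1}^{t_2}\big(\partial_t\gamma(\tilde\eta_n),\partial_t\tilde\eta_n\big)\ge \tfrac1{C_\gamma}\int_{t_1}^{t_2}|\partial_t\gamma(\tilde\eta_n)|^2$ directly via difference quotients, avoiding pointwise differentiation of $\gamma$ altogether. Once $\Phi_n(0)$ is bounded, the constants $C_6,C_7$ follow with the claimed dependence on $C_\gamma$ and the data.
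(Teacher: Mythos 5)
Your overall skeleton is close to the paper's: the energy you track, the inequality $\partial_t\tilde\eta_n\,\partial_t\gamma(\tilde\eta_n)\ge \frac{1}{C_\gamma}|\partial_t\gamma(\tilde\eta_n)|^2$ (this is \eqref{LipschitzTimeDef}), and the control of the initial energy via the $A$-orthogonal projection bound together with the hypothesis $\vartheta(0)-g(0)\in H^s_0(\Omega)$ all appear in the paper's proof. But the mechanism you propose for generating the estimate --- ``differentiate the approximate equation \eqref{ApproxCauchyProb} in time and test against $\partial_t\gamma(\tilde\eta_n)$'' --- is both unworkable under the stated hypotheses and internally inconsistent. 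Unworkable: differentiating the equation produces $i^*(\partial_t f_n)$, and your right-hand side indeed contains $(\partial_t i^*(f_n),\partial_t\tilde\eta_n)$; but $f$ is only assumed in $L^2(0,T;L^2(\Omega))$, so $\partial_t f_n$ does not exist, and $\partial_t^2\eta_n$ is likewise unavailable for the Galerkin solution. Inconsistent: the identity you claim --- dissipation $\frac{1}{C_\gamma}\int_\Omega|\partial_t\gamma(\tilde\eta_n)|^2$ plus $\frac{d}{dt}\Phi_n$ --- is precisely what results from testing the \emph{undifferentiated} equation \eqref{A1CauchyProbSeq} with $\partial_t h_n$, $h_n=\mathbb{P}_{F_n}(\gamma(\tilde\eta_n)-g)$, which is what the paper does in \eqref{A1CauchyProbSeqVar1}--\eqref{A1CauchyProbSeqVar3}; differentiating first and then testing with the temperature derivative would instead produce the full elliptic dissipation $[\partial_t h_n,\partial_t h_n]_A$, not $\frac{d}{dt}\Phi_n$. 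Note also that $\partial_t(\gamma(\tilde\eta_n)-g)$ is not an admissible Galerkin test function; one must test with $\partial_t h_n\in F_n$ and then use the orthogonality $\partial_t\tilde\eta_n-f_n\in E_n$ coming from Proposition \ref{vnChar} to replace $\partial_t\tilde h_n=\mathbb{P}_{E_n}\partial_t(\gamma(\tilde\eta_n)-g)$ by $\partial_t(\gamma(\tilde\eta_n)-g)$ in the pairing --- a step your sketch omits.

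The second genuine gap is your claim that the boundary term is ``handled by Cauchy--Schwarz and absorbed.'' After the substitution above, the dangerous term is $\int_0^t\int_\Omega\partial_t\tilde\eta_n\,\partial_t g$. The dissipation controls only $\partial_t\gamma(\tilde\eta_n)$ in $L^2(Q_T)$; since $\gamma$ has flat parts (this is the whole point of the Stefan problem), no $L^2$ bound on $\partial_t\tilde\eta_n$ follows, and uniformly in $n$ one only controls $\partial_t\eta_n$ in $L^2(0,T;H^{-s}(\Omega))$, which cannot be paired with $\partial_t g(t)\in L^2$ by Cauchy--Schwarz. There is nothing to absorb this term into. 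The paper's resolution is to integrate it by parts in time, as in \eqref{A1CauchyProbSeqVar4}, rewriting it as $-\int_0^t\int_\Omega\tilde\eta_n\,\partial_t^2 g+\int_\Omega\tilde\eta_n(t)\,\partial_t g(t)-\int_\Omega\tilde\eta_n(0)\,\partial_t g(0)$ and then using the uniform bound $\tilde\eta_n\in L^\infty(0,T;L^2(\Omega))$ from \eqref{Estimate3}. This is exactly why the hypothesis $\tilde g\in W^{2,1}(0,T;L^2(\mathbb{R}^d))$ --- two time derivatives --- is imposed; your sketch, which only ever uses $\partial_t g$, leaves the role of this hypothesis unexplained and the corresponding step unjustified.
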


\begin{proof}
Since $\eta_n\in H^1(0,T;F_n^*)$, there exists $\tilde{\eta}_n\in H^1(0,T;E_n)$ such that $\eta_n=i^*(\tilde{\eta}_n)$, $v_n=\gamma(\tilde{\eta}_n)-g$, and, by Proposition \ref{vnChar} applied to $\tilde{\eta}_n$, satisfies \begin{equation}\label{A1CauchyProbSeq}\frac{\partial \eta_n}{\partial t}+\mathcal{L}v_n=i^*(f_n), v_n\in L^2(0,T;H^s_0(\Omega))\text{  with }i(v_n)+g-\gamma(\tilde{\eta}_n)\perp E_n\text{ in }L^2(\Omega).\end{equation}

Since $\gamma$ is Lipschitz, we have $\gamma(\tilde{\eta}_n)\in H^1(0,T;L^2(\Omega))$ and $\gamma(\tilde{\eta}_n)-g\in H^1(0,T;L^2(\Omega))$. Let $h_n=\mathbb{P}_{F_n}v_n$ and $\tilde{h}_n=\mathbb{P}_{E_n}(\gamma(\tilde{\eta}_n)-g)$. Then \begin{equation}\label{A1ProjSpace}h_n\in H^1(0,T;F_n)\text{ and }\tilde{h}_n\in H^1(0,T;E_n).\end{equation} Indeed, we have $\gamma(\tilde{\eta}_n)-g\in H^1(0,T;L^2(\Omega))$, so $\tilde{h}_n=\mathbb{P}_{E_n}(\gamma(\tilde{\eta}_n)-g)\in H^1(0,T;E_n)$. Since $\tilde{h}_n=\mathbb{P}_{E_n}i(v_n)$, so by the choice of $F_n$, we have $\mathbb{P}_{E_n}\circ i=i\circ\mathbb{P}_{F_n}$, and we deduce that $i(h_n)=\tilde{h}_n$. Therefore, since $i$ gives an isomorphism between $F_n$ and $E_n$, we obtain the properties in \eqref{A1ProjSpace}.

Making use of these properties, we can therefore multiply \eqref{A1CauchyProbSeq} by $\frac{\partial h_n}{\partial t}\in L^2(0,T;F_n)$ to obtain \begin{equation}\label{A1CauchyProbSeqVar1}\int_\Omega\frac{\partial \tilde{\eta}_n}{\partial t}\left[i\left(\frac{\partial h_n}{\partial t}\right)\right]+\left[v_n,\frac{\partial h_n}{\partial t}\right]_A=\int_\Omega f_n\left[
i\left(\frac{\partial h_n}{\partial t}\right)\right]\end{equation} by \eqref{RieszRep}. Now, \[\left[\frac{\partial h_n}{\partial t},v_n\right]_A=\left[\frac{\partial h_n}{\partial t},\mathbb{P}_{F_n}v_n\right]_A=\left[\frac{\partial h_n}{\partial t},h_n\right]_A=\int_{\mathbb{R}^d}AD^s\frac{\partial h_n}{\partial t}\cdot D^sh_n=\frac{1}{2}\frac{\partial }{\partial t}\int_{\mathbb{R}^d}AD^sh_n\cdot D^sh_n,\] and from $f_n=\mathbb{P}_{E_n}f$, we obtain \begin{equation}\label{A1CauchyProbSeqVar2}\int_\Omega\frac{\partial \tilde{\eta}_n}{\partial t}\frac{\partial \tilde{h}_n}{\partial t}+\frac{1}{2}\frac{\partial }{\partial t}\int_{\mathbb{R}^d}AD^sh_n\cdot D^sh_n=\int_\Omega f_n\frac{\partial \tilde{h}_n}{\partial t}.\end{equation} Now, recalling the definition of $\tilde{h}_n$, we observe that \[\int_\Omega\left(\frac{\partial \tilde{\eta}_n}{\partial t}-f_n\right)\frac{\partial \tilde{h}_n}{\partial t}=\int_\Omega\left(\frac{\partial \tilde{\eta}_n}{\partial t}-f_n\right)\frac{\partial }{\partial t}\mathbb{P}_{E_n}(\gamma(\tilde{\eta}_n)-g)=\int_\Omega\left(\frac{\partial \tilde{\eta}_n}{\partial t}-f_n\right)\mathbb{P}_{E_n}\frac{\partial }{\partial t}(\gamma(\tilde{\eta}_n)-g),\] so since $\frac{\partial \tilde{\eta}_n}{\partial t}-f_n\perp L^2(\Omega)\backslash E_n$, we have \begin{equation}\label{A1CauchyProbSeqVar3}\int_\Omega\left(\frac{\partial \tilde{\eta}_n}{\partial t}-f_n\right)\frac{\partial }{\partial t}(\gamma(\tilde{\eta}_n)-g)+\frac{1}{2}\frac{\partial }{\partial t}\int_{\mathbb{R}^d}AD^sh_n\cdot D^sh_n=0.\end{equation} Integrating this over $[0,t]$ for $t\leq T$, we obtain, by the coercivity of $A$ in \eqref{Acoer} and integrating by parts in time, \begin{align}\label{A1CauchyProbSeqVar4}\begin{split}&\int_0^t\int_\Omega\frac{\partial \tilde{\eta}_n}{\partial t}\frac{\partial \gamma(\tilde{\eta}_n)}{\partial t}+\frac{1}{2}a_*\norm{h_n(t)}_{H^s_0(\Omega)}^2\\\leq&\,\frac{1}{2}a^*\norm{h_n(0)}_{H^s_0(\Omega)}^2+\int_0^t\int_\Omega f_n\frac{\partial \gamma(\tilde{\eta}_n)}{\partial t}+\int_0^t\int_\Omega \frac{\partial \tilde{\eta}_n}{\partial t}\frac{\partial g}{\partial t}-\int_0^t\int_\Omega f_n\frac{\partial g}{\partial t}\\=&\,\frac{1}{2}a^*\norm{h_n(0)}_{H^s_0(\Omega)}^2+\int_0^t\int_\Omega f_n\frac{\partial \gamma(\tilde{\eta}_n)}{\partial t}-\int_0^t\int_\Omega \tilde{\eta}_n\frac{\partial^2 g}{\partial t^2}\\&+\int_\Omega \tilde{\eta}_n(t)\frac{\partial g}{\partial t}(t)-\int_\Omega \tilde{\eta}_n(0)\frac{\partial g}{\partial t}(0)-\int_0^t\int_\Omega f_n\frac{\partial g}{\partial t}.\end{split}\end{align}

Now, we know by \eqref{AttouchDalamianThmEst3} that $\phi_{t,n}(\eta_n(t))$ is bounded independent of $n$ and $t$, so $\norm{\tilde{\eta}_n}_{L^\infty(0,T;L^2(\Omega))}$ is bounded independent of $n$ (see also \eqref{Estimate1}). Then, by the Cea-type lemma (see, for instance, Proposition 2.5 of \cite{arendt2019galerkin}) given by  \[\norm{\mathbb{P}_{F_n}w}_{H^s_0(\Omega)}^2\leq\frac{a^*}{a_*} \norm{w}_{H^s_0(\Omega)}^2\quad\forall w\in H^s_0(\Omega),\] we have, by the compatibility of the initial condition giving $h_n(0)=\mathbb{P}_{F_n}(\gamma(\tilde{\eta}_n(0))-g(0))=\mathbb{P}_{F_n}(\vartheta(0)-g(0))$, \begin{multline}\label{A1CauchyProbSeqVar5}\int_0^t\int_\Omega\frac{\partial \tilde{\eta}_n}{\partial t}\frac{\partial \gamma(\tilde{\eta}_n)}{\partial t}+\frac{1}{2}a_*\norm{h_n(t)}_{H^s_0(\Omega)}^2\\\leq\frac{1}{2}\frac{{a^*}^2}{a_*}\norm{\vartheta(0)-g(0)}_{H^s_0(\Omega)}^2+\int_0^t\int_\Omega f_n\frac{\partial \gamma(\tilde{\eta}_n)}{\partial t}-\int_0^t\int_\Omega \tilde{\eta}_n\frac{\partial^2 g}{\partial t^2}\\+\int_\Omega \tilde{\eta}_n(t)\frac{\partial g}{\partial t}(t)-\int_\Omega \tilde{\eta}_n(0)\frac{\partial g}{\partial t}(0)-\int_0^t\int_\Omega f_n\frac{\partial g}{\partial t}.\end{multline}

Now, letting $C_\gamma$ be the Lipschitz constant of $\gamma$, we have \begin{equation}\label{LipschitzTimeDef}\frac{\partial \tilde{\eta}_n}{\partial t}\frac{\partial \gamma(\tilde{\eta}_n)}{\partial t}\geq\frac{1}{C_\gamma}\left|\frac{\partial \gamma(\tilde{\eta}_n)}{\partial t}\right|^2\text{ a.e. } Q_T.\end{equation} Also, observe the boundedness of $\tilde{\eta}_n$ in $L^\infty(0,T;L^2(\Omega))$, since $\eta_n$ is obtained as a solution to the Faedo-Galerkin finite dimensional approximated problem \eqref{ApproxCauchyProb} and therefore also satisfies \eqref{Estimate3}. Therefore, applying the Cauchy-Schwarz inequality to the term $\int_0^t\int_\Omega f_n\frac{\partial \gamma(\tilde{\eta}_n)}{\partial t}$ and making use of the assumption $\vartheta(0)-g(0)\in H^s_0(\Omega)$ gives the first uniform bound $\int_0^T\int_\Omega\left|\frac{\partial \gamma(\tilde{\eta}_n)}{\partial t}\right|^2\leq C_6$.

Using again \eqref{A1CauchyProbSeqVar5}, we can easily take the supremum over all time to obtain the second uniform bound $\norm{\mathbb{P}_{F_n}(\gamma(\tilde{\eta}_n)-g)}_{L^\infty(0,T;H^s_0(\Omega))}=\norm{h_n}_{L^\infty(0,T;H^s_0(\Omega))}\leq C_7$.
\end{proof}

\begin{remark}\label{GalerkinEstRemark}
For fixed $\sigma>0$ and $s$ such that $\sigma\leq s\leq 1$, similarly to Remark \ref{AttouchDamlamianThmContinuity}, we observe that $\tilde{\eta}_n\in L^\infty(0,T;L^2(\Omega))$, and $\tilde{\eta}_n$ can be bounded for each $s$ by a constant depending on $\sigma$ but independent of $s$, by the continuity of the eigenfunctions (in Appendix \ref{Sect:Eigen}), and depending explicity on $T$ and $\gamma$. Similarly, by Appendix \ref{Sect:Eigen}, the $\eta_n$'s are bounded independent of $s\geq\sigma$ in $H^1(0,T;F_n^*)$. This allows us to consider the convergence of the variational problem as $s$ varies.

In addition, when we have a sequence of Lipschitz functions $\gamma_n$, we can also obtain \eqref{LipschitzTimeDef} by considering a Lipschitz constant $C_\gamma$ given by the supremum of all the Lipschitz constants $C_{\gamma_n}$.
\end{remark}

Now, we can finally proceed to show the existence of more regular solutions to the variational problem \eqref{FracStefan}. Indeed, we have the following result:

\begin{theorem}\label{VarStefanThm2HigherReg}
Let $f\in L^2(0,T;L^2(\Omega))$ and $\tilde{g}\in W^{2,1}(0,T;L^2(\mathbb{R}^d))\cap L^\infty(0,T;H^s(\mathbb{R}^d))$, and define $g$ as in \eqref{DirichletBdryCond} with the same regularity (see Appendix \ref{Sect:DirBdryCondAp}). Assume $\eta_0\in L^2(\Omega)$, and setting $\vartheta(0)=\gamma(\eta_0)$ assume $\vartheta(0)-g(0)\in H^s_0(\Omega)$. Then there exists a unique weak temperature solution $\vartheta$ to the variational problem \eqref{FracStefanEta}--\eqref{FracStefan}, such that \begin{equation}\label{TempReg} \vartheta\in L^\infty(0,T;H^s(\mathbb{R}^d))\cap H^1(0,T;L^2(\Omega)).\end{equation}
\end{theorem}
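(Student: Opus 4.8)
The plan is to realise $\vartheta=\gamma(\eta)$ as the limit of the Faedo--Galerkin temperatures $\gamma(\tilde{\eta}_n)$ built above, and to transfer the $n$-uniform bounds of Proposition \ref{TimeDefEstProp} to this limit. Existence and uniqueness of the underlying enthalpy $\eta$, hence of $\vartheta=\gamma(\eta)$, are already in hand: by Proposition \ref{AttouchConvergeThm} together with the Mosco convergence $\phi_{t,n}\xrightarrow{M}\phi_t$ of Proposition \ref{MoscoConvgConvexFuncTemp}, the solutions $\eta_n$ of \eqref{ApproxCauchyProb} converge weakly in $H^1(0,T;H^{-s}(\Omega))$ to the solution $\eta$ of the limit Cauchy problem, which is exactly \eqref{FracStefanEta}, while uniqueness comes from the continuous dependence estimate of Proposition \ref{ContDep}. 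The genuinely new content is the regularity \eqref{TempReg}, so the argument reduces to passing to the limit in the Galerkin scheme using the bounds $C_6$ and $C_7$.

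First I would extract the weak limits. Since $\gamma$ is Lipschitz with $\gamma(0)=0$ and $\tilde{\eta}_n$ is bounded in $L^\infty(0,T;L^2(\Omega))$ (the finite-dimensional problem also satisfies \eqref{Estimate3}), the sequence $\gamma(\tilde{\eta}_n)$ is bounded in $L^2(0,T;L^2(\Omega))$; combined with the first bound in \eqref{TimeDefEstPropBounds} it is bounded in $H^1(0,T;L^2(\Omega))$, so up to a subsequence $\gamma(\tilde{\eta}_n)\rightharpoonup\chi$ weakly there. The second bound in \eqref{TimeDefEstPropBounds} gives $h_n:=\mathbb{P}_{F_n}(\gamma(\tilde{\eta}_n)-g)$ bounded in $L^\infty(0,T;H^s_0(\Omega))$, whence a weak-$*$ limit $h\in L^\infty(0,T;H^s_0(\Omega))$. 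Moreover $\partial_t i(h_n)=\mathbb{P}_{E_n}\partial_t(\gamma(\tilde{\eta}_n)-g)$ is bounded in $L^2(0,T;L^2(\Omega))$, because $\partial_t\gamma(\tilde{\eta}_n)$ is controlled by $C_6$ and $\partial_t g\in L^\infty(0,T;L^2(\Omega))$ follows from $\tilde{g}\in W^{2,1}(0,T;L^2(\mathbb{R}^d))$. By the Aubin--Lions--Simon lemma with the compact embedding $H^s_0(\Omega)\hookrightarrow\hookrightarrow L^2(\Omega)$, the sequence $h_n$ is then relatively compact and $h_n\to h$ strongly in $L^2(Q_T)$.

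The crucial step is identifying these limits with $\vartheta-g$ and $\vartheta$. Passing to the limit in the approximate equation \eqref{A1CauchyProbSeq}, $\partial_t\eta_n+\mathcal{L}v_n=i^*(f_n)$, where by Proposition \ref{vnChar} one has $i(v_n)=\mathbb{P}_{E_n}(\gamma(\tilde{\eta}_n)-g)$ and hence $v_n=h_n$, I use $\partial_t\eta_n\rightharpoonup\partial_t\eta$ weakly in $L^2(0,T;H^{-s}(\Omega))$ and $i^*(f_n)\to i^*(f)$ strongly to obtain $\mathcal{L}v_n\rightharpoonup i^*(f)-\partial_t\eta=\mathcal{L}(\vartheta-g)$; since $\mathcal{L}^{-1}$ is continuous from $H^{-s}(\Omega)$ onto $H^s_0(\Omega)$, this yields $v_n\rightharpoonup\vartheta-g$ weakly in $L^2(0,T;H^s_0(\Omega))$. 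Comparison with the strong limit $h_n=v_n\to h$ forces $h=\vartheta-g$, so $\vartheta-g\in L^\infty(0,T;H^s_0(\Omega))$ and therefore $\vartheta\in L^\infty(0,T;H^s(\mathbb{R}^d))$, as $g\in L^\infty(0,T;H^s(\mathbb{R}^d))$. For the time regularity, since $\mathbb{P}_{E_n}\to Id$ strongly in $L^2(\Omega)$, the weak limit of $\mathbb{P}_{E_n}(\gamma(\tilde{\eta}_n)-g)=i(h_n)$ equals the weak limit $\chi-g$ of $\gamma(\tilde{\eta}_n)-g$; but $i(h_n)\to h$ strongly, so $\chi=g+h=\vartheta$, giving $\gamma(\tilde{\eta}_n)\rightharpoonup\vartheta$ weakly in $H^1(0,T;L^2(\Omega))$ and hence $\vartheta\in H^1(0,T;L^2(\Omega))$. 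Together these prove \eqref{TempReg}.

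I expect the main obstacle to be exactly the passage of the nonlinearity $\gamma$ through the weak convergence, i.e. the identification of $\chi$ with $\gamma(\eta)$, which is unavailable from the weak convergence of $\tilde{\eta}_n$ alone. Here it is resolved by the strong $L^2(Q_T)$-compactness of the projected temperatures $h_n$ furnished by Aubin--Lions, which pins down the $H^s_0$-valued limit and reconciles it with the limit read off from the equation. Alternatively, one may argue by Minty's monotonicity trick: the orthogonality $\gamma(\tilde{\eta}_n)-g-i(h_n)\perp E_n$ together with $\tilde{\eta}_n\in E_n$ allows one to compute $\lim\int_{Q_T}\gamma(\tilde{\eta}_n)\tilde{\eta}_n=\int_{Q_T}\chi\,\eta$, after which the maximal monotonicity of the graph of $\gamma$ on $L^2(Q_T)$ gives $\chi=\gamma(\eta)$.
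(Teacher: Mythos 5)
Your proposal is correct and follows essentially the paper's own route: Faedo--Galerkin approximation, the uniform bounds of Proposition \ref{TimeDefEstProp}, Attouch's theorem (Proposition \ref{AttouchConvergeThm}) combined with the Mosco convergence of Proposition \ref{MoscoConvgConvexFuncTemp}, and identification of the weak limit of $\mathcal{L}v_n$ through the subdifferential characterisation \eqref{SubdiffChar} of the limit problem.

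Two remarks on where you deviate. First, the Aubin--Lions compactness step is superfluous: once the equation gives $v_n\rightharpoonup\vartheta-g$ weakly in $L^2(0,T;H^s_0(\Omega))$ and \eqref{TimeDefEstPropBounds} gives $h_n$ bounded in $L^\infty(0,T;H^s_0(\Omega))$ and $\tilde{h}_n$ bounded in $H^1(0,T;L^2(\Omega))$, uniqueness of weak limits already identifies the weak-$*$ limits with $\vartheta-g$ and $i(\vartheta-g)$, and weak lower semicontinuity of the norms yields \eqref{TempReg}; this is all the paper uses. Note that the nonlinearity is not handled by compactness in either argument (including yours): it is handled by Attouch's theorem, which guarantees that the limit $\eta$ solves the limit Cauchy problem whose subdifferential is $\mathcal{L}(\gamma(\eta)-g)$, so your closing diagnosis about the "main obstacle" overstates the role of the strong convergence. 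Second, your one-line claim that $v_n=h_n$ does not follow from Proposition \ref{vnChar} alone: that proposition only yields $\mathbb{P}_{E_n}\,i(v_n)=\mathbb{P}_{E_n}(\gamma(\tilde{\eta}_n)-g)$, and upgrading this to $i(v_n)=\mathbb{P}_{E_n}(\gamma(\tilde{\eta}_n)-g)$ requires $v_n\in F_n$. That is in fact true, but it needs the extra observation that $\mathcal{L}v_n=i^*(f_n)-\frac{\partial\eta_n}{\partial t}$ lies in $F_n^*$ (both terms do, since $f_n\in E_n$ with $i^*(E_n)=F_n^*$ and $\eta_n\in H^1(0,T;F_n^*)$), whence $v_n\in\mathcal{L}^{-1}(F_n^*)=F_n$. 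The paper sidesteps this by writing $v_n=h_n+k_n$ with $k_n\in F_n^\perp$ and showing $k_n\rightharpoonup0$; either fix works, but as written that step of yours is unjustified, and your identification $h=\vartheta-g$ leans on it.
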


\begin{proof}
\sloppy From Proposition \ref{TimeDefEstProp},  $h_n$ is bounded in $L^\infty(0,T;H^s_0(\Omega))$. Furthermore, if we recall the definition of $\tilde{h}_n$ as the projection onto $E_n$, we have \[\norm{\frac{\partial \tilde{h}_n}{\partial t}}_{L^2(\Omega)}\leq\norm{\frac{\partial }{\partial t}(\gamma(\tilde{\eta}_n)-g)}_{L^2(\Omega)},\] so $\tilde{h}_n$ is bounded in $H^1(0,T;L^2(\Omega))$.

By Proposition \ref{AttouchConvergeThm}, we know that $\eta_n\rightharpoonup \eta$ in $H^1(0,T;H^{-s}(\Omega))$ and $\tilde{\eta}_n\rightharpoonup \eta$ weakly$^*$ in $L^\infty(0,T;L^2(\Omega))$, and \[\mathcal{L}(v_n)=\partial\phi_{t,n}(\eta^n)=i^*(f_n)-\frac{\partial \eta_n }{\partial t}\rightharpoonup i^*(f)-\frac{\partial \eta}{\partial t}=\partial\phi_t(\eta)=\mathcal{L}v\text{ in }L^2(0,T;H^{-s}(\Omega)).\] Therefore, on applying $\mathcal{L}^{-1}$, $v_n$ tends to $v=\gamma(\eta)-g$ weakly in $L^2(0,T;H^s_0(\Omega))$.

Since $v_n=h_n+k_n$ for some $k_n\in F_n^\perp$, we deduce that $k_n\rightharpoonup0$ in $L^2(0,T;H^s_0(\Omega))$ and $h_n\rightharpoonup \gamma(\eta)-g$ also in this space, so $\tilde{h}_n\rightharpoonup i(\gamma(\eta)-g)$ in $L^2(0,T;L^2(\Omega))$. Therefore, by \eqref{TimeDefEstPropBounds}, $\gamma(\eta)-g$ lies in $L^\infty(0,T;H^s_0(\Omega))$ and $i(\gamma(\eta)-g)\in H^1(0,T;L^2(\Omega))$. Finally as $\vartheta=\gamma(\eta)$, we have the desired regularity \eqref{TempReg}.
\end{proof}

\begin{remark}\label{TempSBoundedRemark}
It can be seen that the bounds in \eqref{TimeDefEstPropBounds} can be made to depend only on $\sigma>0$ and independent of $s$ for $\sigma\leq s\leq1$, by the continuity of the eigenfunctions as shown in Appendix \ref{Sect:Eigen}. Then, as in Remark \ref{GalerkinEstRemark}, the bounds $\norm{D^s(\vartheta-g)}_{L^\infty(0,T;L^2(\mathbb{R}^d)^d)}$ and $\norm{\frac{\partial \vartheta}{\partial t}}_{L^2(0,T;L^2(\Omega))}$in \eqref{TempReg} only depend only on $\sigma$ and independent of $s$, allowing us to consider the convergence of the variational problem as $s$ varies.
\end{remark}

\section{Convergence to the Classical Problem as $s\nearrow1$}\label{Sect:sConvg}

Next, as $s\nearrow1$ the $s$-fractional derivatives converge to the classical derivatives, we show that the corresponding solutions to the fractional Stefan-type problem converge in appropriate spaces to the classical one. We first recall the fractional Poincar\'e inequality.

\begin{lemma}[Fractional Poincar\'e inequality, Theorem 2.9 of \cite{BellidoCuetoMoraCorral2021CVPDEgammaconvg}]\label{Poincare}
Let $s\in(0,1)$. Then there exists a constant $C_P=C(d,\Omega)>0$ such that \[\norm{u}_{L^2(\Omega)}\leq \frac{C_P}{s}\norm{D^su}_{L^2(\mathbb{R}^d)^d}\] for all $u\in H^s_0(\Omega)$.
\end{lemma}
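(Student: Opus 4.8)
The plan is to prove the inequality first for $u\in C_c^\infty(\Omega)$ and then pass to all of $H^s_0(\Omega)$ by density: since $H^s_0(\Omega)=\overline{C_c^\infty(\Omega)}^{\norm{\cdot}_{H^s}}$ by definition, and both $u\mapsto\norm{u}_{L^2(\Omega)}$ and $u\mapsto\norm{D^su}_{L^2(\mathbb{R}^d)^d}$ are continuous for the $H^s$-norm, the inequality survives taking closures. The heart of the argument is Fourier-analytic. The constant $c_{d,s}$ in the definition of $D^s$ is chosen precisely so that $D^s$ has Fourier symbol $i\tfrac{\xi}{|\xi|}|\xi|^s$, whence by Plancherel's theorem $\norm{D^su}_{L^2(\mathbb{R}^d)^d}^2 = c_d\int_{\mathbb{R}^d}|\xi|^{2s}\,|\hat u(\xi)|^2\,d\xi$ (with $c_d$ a bounded convention-dependent factor coming from the $2\pi$ in the transform), so it suffices to control $\int|\hat u|^2=\norm{u}_{L^2}^2$ by $\int|\xi|^{2s}|\hat u|^2$. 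On the whole space no such control exists, so the essential extra input is that $u$, being the zero-extension of a function in $H^s_0(\Omega)$, is supported in the bounded set $\overline\Omega$.

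Next I would split frequency space at a threshold $\rho>0$. On $\{|\xi|\ge\rho\}$ one has $|\hat u(\xi)|^2\le \rho^{-2s}|\xi|^{2s}|\hat u(\xi)|^2$, which bounds $\int_{|\xi|\ge\rho}|\hat u|^2$ by $c_d^{-1}\rho^{-2s}\norm{D^su}_{L^2(\mathbb{R}^d)^d}^2$. On $\{|\xi|<\rho\}$ I exploit the compact support through the elementary estimate $\sup_\xi|\hat u(\xi)|\le\norm{u}_{L^1(\Omega)}\le|\Omega|^{1/2}\norm{u}_{L^2(\Omega)}$, so that $\int_{|\xi|<\rho}|\hat u|^2 \le |\Omega|\,\omega_d\,\rho^{d}\,\norm{u}_{L^2(\Omega)}^2$, where $\omega_d=|B_1|$. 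Adding the two contributions gives $\norm{u}_{L^2(\Omega)}^2 \le |\Omega|\omega_d\rho^d\,\norm{u}_{L^2(\Omega)}^2 + c_d^{-1}\rho^{-2s}\norm{D^su}_{L^2(\mathbb{R}^d)^d}^2$.

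Choosing $\rho=\rho_0(d,\Omega)$ so that $|\Omega|\omega_d\rho_0^{d}=\tfrac12$ absorbs the first term on the right into the left-hand side and yields $\norm{u}_{L^2(\Omega)}\le C(d)\,\rho_0^{-s}\,\norm{D^su}_{L^2(\mathbb{R}^d)^d}$, with $\rho_0$ depending only on $d$ and $|\Omega|$. Finally, since $0<s<1$ we have $\rho_0^{-s}\le\max(1,\rho_0^{-1})$, a constant depending only on $d$ and $\Omega$; absorbing it into a single $C_P=C_P(d,\Omega)$ and using the trivial bound $1\le 1/s$ on $(0,1)$ recovers the stated form $\norm{u}_{L^2(\Omega)}\le\tfrac{C_P}{s}\norm{D^su}_{L^2(\mathbb{R}^d)^d}$. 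I expect the main point requiring care to be the low-frequency estimate, i.e.\ extracting a quantitative Poincar\'e constant from the mere boundedness of $\operatorname{supp}u$, together with checking that the resulting constant stays uniformly bounded in $s$, and in particular does not degenerate as $s\nearrow1$, which is exactly the regime exploited in the later convergence results. I would also note that this route in fact produces a constant bounded on all of $(0,1)$, so the explicit factor $1/s$ in the statement is a convenient (non-sharp) packaging rather than a sharp rate, automatically valid because $1/s>1$; any proof that tracks the $s$-dependence through the Gagliardo constant of $(-\Delta)^{s/2}$ must instead be handled with care, as the naive far-field lower bound degrades precisely as $s\nearrow1$.
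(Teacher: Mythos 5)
Your proof is correct, and it is necessarily a different route from the paper's, because the paper offers no argument at all for this lemma: it is imported verbatim from Theorem 2.9 of \cite{BellidoCuetoMoraCorral2021CVPDEgammaconvg}. In that reference the inequality is proved for general exponents $p$, via the representation of $u$ as a Riesz potential of its fractional gradient (the fractional fundamental theorem of calculus, cf. \cite{SS1}, \cite{Comi1}); in that route the factor $1/s$ arises naturally from integrating the kernel $|z|^{s-d}$ over a bounded set, since $\int_{B_R}|z|^{s-d}\,dz = c_d R^s/s$. Your Plancherel-plus-frequency-splitting argument is confined to $p=2$, but within that setting it is more elementary and yields a strictly stronger conclusion: a constant bounded uniformly for $s\in(0,1)$ (indeed up to $s=1$), with no degeneration at either endpoint, from which the stated $C_P/s$ form follows trivially since $1/s>1$. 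That uniformity is precisely what the paper needs downstream, e.g. the bound $\norm{f_s}_{L^2(0,T;H^{-s}(\Omega))}\leq \frac{C_P}{\sigma}\norm{f_s}_{L^2(0,T;L^2(\Omega))}$ in the proof of Theorem \ref{ConvergenceThmSStefan} and the estimates of Appendix \ref{Sect:Eigen}, which only require control for $\sigma\leq s\leq 1$. Two points you should make explicit in a write-up: (i) the symbol identity $\widehat{D^su}(\xi)=2\pi i\xi\,\abs{2\pi\xi}^{s-1}\hat{u}(\xi)$ for $u\in C_c^\infty(\Omega)$ (in the $e^{-2\pi i x\cdot\xi}$ convention; see \cite{SS1}, \cite{Silhavy}), which identifies your convention factor as $(2\pi)^{2s}\in[1,(2\pi)^2]$ and confirms it is bounded above and below uniformly in $s$ --- this is exactly where the normalising constant $c_{d,s}$ of the paper enters; and (ii) that the low-frequency bound $\sup_\xi\abs{\hat{u}(\xi)}\leq\norm{u}_{L^1(\Omega)}\leq\abs{\Omega}^{1/2}\norm{u}_{L^2(\Omega)}$ uses the zero extension being supported in the bounded set $\overline{\Omega}$, which is the only place the domain enters and what makes the absorption step legitimate. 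Both are exactly as you indicated, so the argument stands as a complete, self-contained alternative to the citation.
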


To consider the convergence of the problem as $s\nearrow1$, we start with a continuous dependence property of the Riesz derivatives as $s$ varies, which can be easily shown using Fourier transform first for $u(t)\in C_c^\infty(\Omega)$, and extended by density as in Lemma 3.7 of \cite{FracObsRiesz}.

\begin{lemma}\label{sContDepStefan}
For $u\in L^\infty(0,T;H^{s'}_0(\Omega))$, $D^su$ is continuous in $L^\infty(0,T;L^2(\mathbb{R}^d)^d)$ as $s$ varies in $[\sigma,s']$ for $0<\sigma< s'\leq 1$. As a consequence, we have the following estimate: for $\sigma\leq s\leq1$,  \begin{equation}\label{ConvergenceEq1}\norm{D^\sigma u(t)}_{L^2(\mathbb{R}^d)^d}\leq c_\sigma\norm{D^su(t)}_{L^2(\mathbb{R}^d)^d},\end{equation} for any $u(t)\in H^{s}_0(\Omega)$ for a.e. $t\in[0,T]$, where the constant $c_\sigma$ is independent of $s$ and $t$.
\end{lemma}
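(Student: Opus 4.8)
The plan is to reduce everything to the Fourier side, where the Riesz fractional gradient has a clean symbol, and then prove the continuity and the comparison estimate \eqref{ConvergenceEq1} as a statement about multipliers. First I would recall that for sufficiently regular $u$ (say $u\in C_c^\infty(\Omega)$, later extended by density) the Plancherel identity gives
\[
\norm{D^su}_{L^2(\mathbb{R}^d)^d}^2=\int_{\mathbb{R}^d}|2\pi\xi|^{2s}\,|\widehat{u}(\xi)|^2\,d\xi,
\]
since the symbol of the Riesz $s$-gradient $D^s$ has modulus $|2\pi\xi|^{s}$ (the unit-vector factor $\xi/|\xi|$ contributes nothing to the Euclidean norm). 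Thus the map $s\mapsto\norm{D^su(t)}_{L^2(\mathbb{R}^d)^d}^2$ is, for each fixed $u(t)$, an integral of $|2\pi\xi|^{2s}$ against the finite positive measure $|\widehat{u(t)}(\xi)|^2\,d\xi$, and continuity in $s$ on the compact interval $[\sigma,s']$ should follow from dominated convergence once a uniform integrable majorant is found. To get continuity in $L^\infty(0,T;L^2(\mathbb{R}^d)^d)$ rather than merely pointwise in $t$, I would establish the bound uniformly in $t$ using the uniform control $\norm{u}_{L^\infty(0,T;H^{s'}_0(\Omega))}$, so that the majorant does not depend on $t$.

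The heart of the matter is the comparison estimate \eqref{ConvergenceEq1}, which amounts to the pointwise-in-$\xi$ inequality relating $|2\pi\xi|^{2\sigma}$ and $|2\pi\xi|^{2s}$ for $\sigma\le s$. I would split frequency space into low and high frequencies. On $\{|2\pi\xi|\ge 1\}$ one has $|2\pi\xi|^{2\sigma}\le|2\pi\xi|^{2s}$ directly, since $\sigma\le s$. On the low-frequency region $\{|2\pi\xi|<1\}$ the inequality goes the wrong way, so there $|2\pi\xi|^{2\sigma}$ must instead be dominated using the $L^2$ mass of $u$: one bounds $|2\pi\xi|^{2\sigma}\le 1$ and absorbs $\int|\widehat u|^2=\norm{u}_{L^2}^2$, which by the fractional Poincar\'e inequality (Lemma \ref{Poincare}) is itself controlled by $\norm{D^su}_{L^2(\mathbb{R}^d)^d}$. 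Combining the two regions yields
\[
\norm{D^\sigma u(t)}_{L^2(\mathbb{R}^d)^d}^2\le \norm{D^su(t)}_{L^2(\mathbb{R}^d)^d}^2+\norm{u(t)}_{L^2(\Omega)}^2\le\bigl(1+\tfrac{C_P^2}{\sigma^2}\bigr)\norm{D^su(t)}_{L^2(\mathbb{R}^d)^d}^2,
\]
using that $u(t)\in H^s_0(\Omega)\subset H^\sigma_0(\Omega)$ and the Poincar\'e constant for the exponent $\sigma$. This identifies the constant $c_\sigma=\sqrt{1+C_P^2/\sigma^2}$, which manifestly depends only on $\sigma$ (and $d,\Omega$) and is independent of $s$ and $t$, as claimed.

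I expect the main obstacle to be the density argument rather than the symbolic estimate itself. One must verify that the identity $\widehat{D^su}(\xi)=|2\pi\xi|^{s}\,(\text{direction factor})\,\widehat u(\xi)$ and hence the Plancherel formula extend from $C_c^\infty(\Omega)$ to all of $H^{s}_0(\Omega)$, and that the constants obtained on the dense subspace survive passage to the limit uniformly as $s$ varies. Here I would lean on the already-cited Lemma 3.7 of \cite{FracObsRiesz}, which performs exactly this Fourier computation and density extension for the Riesz gradient; the continuity in $s$ is then inherited because the bounding constants are continuous (indeed monotone) in the exponents on $[\sigma,s']$. A secondary technical point is the uniformity in $t$: since the estimate \eqref{ConvergenceEq1} is genuinely pointwise in $t$ with a $t$-independent constant, taking the essential supremum over $t\in[0,T]$ is immediate, and the continuity statement in $L^\infty(0,T;L^2(\mathbb{R}^d)^d)$ follows by applying the dominated-convergence argument to the $t$-uniform majorant supplied by $\norm{u}_{L^\infty(0,T;H^{s'}_0(\Omega))}$.
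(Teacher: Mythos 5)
Your treatment of the comparison estimate \eqref{ConvergenceEq1} is correct and is essentially the paper's own route: the paper's proof is precisely ``Fourier transform for $u(t)\in C_c^\infty(\Omega)$, then density'' (deferring to Lemma 3.7 of \cite{FracObsRiesz}), and your Plancherel identity, the splitting $|2\pi\xi|^{2\sigma}\leq 1+|2\pi\xi|^{2s}$, and the absorption of the low-frequency mass by the Poincar\'e inequality of Lemma \ref{Poincare} reproduce it, with the admissible constant $c_\sigma=\sqrt{1+C_P^2/\sigma^2}$ independent of $s$ and $t$. One small correction: apply Lemma \ref{Poincare} at the exponent $s$ (so $\norm{u(t)}_{L^2(\Omega)}\leq \frac{C_P}{s}\norm{D^su(t)}_{L^2(\mathbb{R}^d)^d}\leq\frac{C_P}{\sigma}\norm{D^su(t)}_{L^2(\mathbb{R}^d)^d}$), not at the exponent $\sigma$; a priori $u(t)$ is only known to lie in $H^s_0(\Omega)$, and its membership in $H^\sigma_0(\Omega)$ is a \emph{consequence} of the estimate plus the density argument, not an ingredient.

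The genuine gap is in the continuity half. Dominated convergence with a $t$-independent majorant gives, for each \emph{fixed} $t$, $\norm{D^su(t)-D^ru(t)}_{L^2(\mathbb{R}^d)^d}\to0$ as $s\to r$; it does not give convergence of the essential supremum over $t$, because the measures $|\widehat{u(t)}(\xi)|^2\,d\xi$ change with $t$ and the dominated convergence theorem is applied to each of them separately. What uniformity in $t$ requires is equi-integrability of the family $\bigl\{|2\pi\xi|^{2s'}|\widehat{u(t)}(\xi)|^2\bigr\}_{t}$ at high frequencies, and this does not follow from boundedness in $H^{s'}_0(\Omega)$. Indeed, at the endpoint $r=s'$ the uniform-in-$t$ statement is actually false in general: take $\phi\in C_c^\infty(\Omega)$, set $v_N(x)=N^{-s'}\phi(x)\cos(2\pi Nx_1)$, which is bounded in $H^{s'}_0(\Omega)$ uniformly in $N$, and let $u(t)=v_{N(t)}$ with $N(t)$ piecewise constant and $N(t)\nearrow\infty$ as $t\nearrow T$; a Plancherel computation then gives, for every $s<s'$,
\begin{equation*}
\esssup_{t\in(0,T)}\norm{D^{s'}u(t)-D^{s}u(t)}_{L^2(\mathbb{R}^d)^d}^2\;\geq\;\limsup_{N\to\infty}\,(2\pi)^{2s'}\bigl(1-(2\pi N)^{s-s'}\bigr)^2\tfrac{1}{2}\norm{\phi}_{L^2(\Omega)}^2\;>\;0,
\end{equation*}
so $s\mapsto D^su$ is not continuous at $s=s'$ in $L^\infty(0,T;L^2(\mathbb{R}^d)^d)$. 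Your argument can be repaired at interior limit points $r<s'$: split the frequency integral at a level $R$, use the Lipschitz-in-exponent bound on $\{|2\pi\xi|\leq R\}$ and, on $\{|2\pi\xi|>R\}$, the gain $R^{2(\max(s,r)-s')}$ coming from the strict inequality $\max(s,r)<s'$ together with the uniform bound on $\norm{u}_{L^\infty(0,T;H^{s'}_0(\Omega))}$. At the endpoint one must settle for pointwise-in-$t$ continuity, or for continuity in $L^2(0,T;L^2(\mathbb{R}^d)^d)$, which follows from the pointwise statement and \eqref{ConvergenceEq1} by dominated convergence in $t$; this weaker form is in fact all that the paper uses when Theorem \ref{ConvergenceThmSStefan} invokes the lemma for test functions as $s\nearrow1$.
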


Consequently, we have a continuous transition from the fractional Stefan-type problem to the classical Stefan-type problem as $s\nearrow1$ in the following sense.

\begin{theorem}\label{ConvergenceThmSStefan}
\sloppy Let $(\eta_s,\vartheta_s)$ be the solution to the fractional Stefan-type problem for $0<\sigma\leq s<1$ for $f_s\in L^2(0,T;L^2(\Omega))$, $\tilde{g}_s\in W^{2,1}(0,T;L^2(\mathbb{R}^d))\cap L^\infty(0,T;H^s(\mathbb{R}^d))$, i.e. $\vartheta_s=\gamma(\eta_s)$ for a.e. $x,t\in Q_T$ and \begin{equation}-\int_{Q_T}\eta_s\frac{\partial \xi}{\partial t}+\int_{\mathbb{R}^d\times[0,T]}AD^s\vartheta_s\cdot D^s\xi=\int_{Q_T} f_s\xi+\int_\Omega\eta_{0,s}\xi(0),\quad\forall \xi\in \Xi^s_T\tag{\ref{FracStefan}}\end{equation} with Dirichlet boundary condition $\vartheta_s=g_s$ on $\Omega^c\times]0,T[$, initial condition $\eta_s(0)=\eta_{0,s}\in L^2(\Omega)$,
and setting $\vartheta_s(0)=\gamma(\eta_{0,s})$ assume $\vartheta_s(0)-g_s(0)\in H^s_0(\Omega)$ is bounded uniformly in $s$ for $0<\sigma\leq s<1$. Suppose that there exists $\eta_0\in L^2(\Omega)$, $f\in L^2(0,T;L^2(\Omega))$ and $\tilde{g}\in W^{2,1}(0,T;L^2(\mathbb{R}^d))\cap L^\infty(0,T;H^1(\mathbb{R}^d))$ such that  \begin{align}\label{sConvgAssumpConvg}\begin{split}\eta_{0,s}\rightharpoonup\eta_0\text { in }&L^2(\Omega),\\\quad f_s\rightharpoonup f\text{ in }&L^2(0,T;L^2(\Omega)),\quad\text{ and }\\ \tilde{g}_s\rightharpoonup \tilde{g}\text{ in }&W^{2,1}(0,T;L^2(\mathbb{R}^d))\text {-weak and in  }L^\infty(0,T;H^\sigma(\mathbb{R}^d))\text {-weak$^*$}.\end{split}\end{align} Then, the sequence $(\eta_s,\vartheta_s)_s$ converges weakly to $(\eta,\vartheta)$ in the sense that \begin{equation}\label{etasconvg}\eta_s\rightharpoonup \eta\text{ in } L^\infty(0,T;L^2(\Omega))\text{-weakly$^*$ and in } H^1(0,T;H^{-1}(\Omega))\text{-weak},\end{equation} and \begin{equation}\label{thetasconvg}\vartheta_s\rightharpoonup\vartheta \text{ in } L^\infty(0,T;H^\sigma(\Omega))\text{-weak$^*$},\text{ in } H^1(0,T;L^2(\Omega))\text{-weak}\text{ and in }C([0,T];L^2(\Omega))\end{equation} as $s\nearrow1$, where $(\eta,\vartheta)$ solves uniquely the Stefan problem for $s=1$ with $\vartheta=\gamma(\eta)$ and initial condition $\eta(0)=\eta_0$ in $\Omega$, and Dirichlet boundary condition $\vartheta=g$ on $\partial\Omega\times]0,T[$, and \begin{equation}\label{ClassicalStefanSLimit}-\int_{Q_T}\eta\frac{\partial \xi}{\partial t}+\int_{Q_T}AD\vartheta\cdot D\xi=\int_{Q_T} f\xi+\int_\Omega\eta_0\xi(0),\quad\forall \xi\in \Xi^1_T.\end{equation}
\end{theorem}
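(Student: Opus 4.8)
The plan is to combine uniform-in-$s$ a priori estimates with the convergence $D^s\to D$ of the Riesz gradient as $s\nearrow 1$, extract weak limits, upgrade to the strong convergence that the nonlinearity requires, and pass to the limit in \eqref{FracStefan}. First I would collect the bounds that, by Remarks \ref{AttouchDamlamianThmContinuity}, \ref{GalerkinEstRemark} and \ref{TempSBoundedRemark}, hold uniformly for $\sigma\le s<1$: from \eqref{Estimate3} and \eqref{Estimate2}, $\eta_s$ is bounded in $L^\infty(0,T;L^2(\Omega))$ and $\partial_t\eta_s$ in $L^2(0,T;H^{-s}(\Omega))\subset L^2(0,T;H^{-1}(\Omega))$, the last inclusion being continuous uniformly in $s$ by Lemma \ref{sContDepStefan}; from \eqref{TempReg}, $\partial_t\vartheta_s$ is bounded in $L^2(0,T;L^2(\Omega))$ and $\|D^s\vartheta_s\|_{L^\infty(0,T;L^2(\mathbb{R}^d)^d)}$ is bounded. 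Using \eqref{ConvergenceEq1}, namely $\|D^\sigma\vartheta_s\|\le c_\sigma\|D^s\vartheta_s\|$, the family $\vartheta_s$ is bounded in $L^\infty(0,T;H^\sigma(\Omega))$ uniformly in $s$. Passing to a subsequence, these bounds yield at once the convergences in \eqref{etasconvg} and the first two parts of \eqref{thetasconvg}.

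Next I would extract the strong convergence needed to treat the relation $\vartheta_s=\gamma(\eta_s)$. Since $\vartheta_s$ is bounded in $L^\infty(0,T;H^\sigma(\Omega))\cap H^1(0,T;L^2(\Omega))$ and $H^\sigma(\Omega)\hookrightarrow\hookrightarrow L^2(\Omega)$, the Aubin--Lions--Simon lemma gives $\vartheta_s\to\vartheta$ strongly in $C([0,T];L^2(\Omega))$, which is the last assertion of \eqref{thetasconvg}; the same argument on balls $B_R$, using the $W^{2,1}(0,T;L^2)$ bound on $g_s$ and $H^\sigma\hookrightarrow\hookrightarrow L^2(B_R)$, gives $g_s\to g$ in $L^2_{\mathrm{loc}}$, and hence $\vartheta_s\to\vartheta$ in $L^2_{\mathrm{loc}}(\mathbb{R}^d\times(0,T))$ with $\vartheta=g$ on $\Omega^c$. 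To identify $\vartheta=\gamma(\eta)$ I would use that $\eta_s\rightharpoonup\eta$ weakly and $\vartheta_s\to\vartheta$ strongly in $L^2(Q_T)$, so $\int_{Q_T}\eta_s\vartheta_s\to\int_{Q_T}\eta\vartheta$; the weak--strong closedness of the maximal monotone graph $\beta$ (equivalently of $\gamma$) then yields $\eta\in\beta(\vartheta)$, i.e. $\vartheta=\gamma(\eta)$ a.e. in $Q_T$.

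It remains to pass to the limit in \eqref{FracStefan} for a fixed test function $\xi\in\Xi^1_T$, which is admissible for every $s$ since $H^1_0(\Omega)\subset H^s_0(\Omega)$. The terms $-\int_{Q_T}\eta_s\,\partial_t\xi$, $\int_{Q_T}f_s\xi$ and $\int_\Omega\eta_{0,s}\xi(0)$ converge to their $s=1$ counterparts by the weak convergences in \eqref{sConvgAssumpConvg} and \eqref{etasconvg}. The delicate term is the nonlocal bilinear form $\int_{\mathbb{R}^d\times[0,T]}AD^s\vartheta_s\cdot D^s\xi$, and this is the main obstacle: I must show simultaneously that the weak limit $\vartheta$ has $H^1$ spatial regularity and that $D^s\vartheta_s\rightharpoonup D\vartheta$ weakly in $L^2(\mathbb{R}^d\times(0,T))^d$. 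For this I would use $\sup_s\|D^s\vartheta_s\|_{L^2}<\infty$ to extract a weak limit $Y$, and identify it by testing against smooth compactly supported $\bm{\psi}$: the fractional integration-by-parts identity gives $\int D^s\vartheta_s\cdot\bm{\psi}=-\int\vartheta_s\,(D^s\cdot\bm{\psi})$, and since $D^s\cdot\bm{\psi}\to\nabla\cdot\bm{\psi}$ strongly while $\vartheta_s\to\vartheta$ in $L^2_{\mathrm{loc}}$, passing to the limit gives $Y=D\vartheta$ distributionally, whence $\vartheta\in L^2(0,T;H^1(\mathbb{R}^d))$ with $\vartheta-g\in L^2(0,T;H^1_0(\Omega))$; this is precisely the $s\nearrow 1$ behaviour of the Riesz gradient established in \cite{Comi2}, \cite{BellidoCuetoMoraCorral2021CVPDEgammaconvg} and \cite{FracObsRiesz}. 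Since $D^s\xi\to D\xi$ strongly in $L^\infty(0,T;L^2(\mathbb{R}^d)^d)$ for the fixed $\xi$ by Lemma \ref{sContDepStefan} (taking $s'=1$) and $A$ is independent of $s$, a weak-times-strong product argument gives $\int_{\mathbb{R}^d\times[0,T]}AD^s\vartheta_s\cdot D^s\xi\to\int_{Q_T}AD\vartheta\cdot D\xi$.

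Collecting these limits shows that $(\eta,\vartheta)$ satisfies \eqref{ClassicalStefanSLimit} with $\vartheta=\gamma(\eta)$, $\eta(0)=\eta_0$ and $\vartheta=g$ on $\partial\Omega\times]0,T[$. Finally, since the continuous dependence estimate of Proposition \ref{ContDep} applies at $s=1$, the classical Stefan problem admits a unique weak solution; therefore every subsequence extracted above has the same limit $(\eta,\vartheta)$, so the whole family converges, which establishes \eqref{etasconvg}--\eqref{thetasconvg}.
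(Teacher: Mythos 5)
Your proposal follows the same overall strategy as the paper's proof: uniform-in-$s$ estimates taken from Remarks \ref{AttouchDamlamianThmContinuity}, \ref{GalerkinEstRemark} and \ref{TempSBoundedRemark}, identification of the limit of the fractional gradients by duality against smooth compactly supported vector fields, Aubin--Lions/Simon compactness for the strong convergence in $C([0,T];L^2(\Omega))$, maximal monotonicity of $\beta$ to identify $\vartheta=\gamma(\eta)$, and a weak-times-strong passage to the limit in the bilinear form. Your closing step (uniqueness of the $s=1$ solution forces the whole family, not just a subsequence, to converge) is a sensible addition that the paper leaves implicit.

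There is, however, a genuine gap in your handling of the boundary datum. The uniform bound furnished by Remark \ref{TempSBoundedRemark} is on $\norm{D^s(\vartheta_s-g_s)}_{L^\infty(0,T;L^2(\mathbb{R}^d)^d)}$, not on $D^s\vartheta_s$, and the estimate \eqref{ConvergenceEq1} of Lemma \ref{sContDepStefan} applies only to functions in $H^s_0(\Omega)$; $\vartheta_s$ is not such a function, since $\vartheta_s=g_s\neq 0$ on $\Omega^c$. Moreover, the hypotheses \eqref{sConvgAssumpConvg} control $\tilde{g}_s$ only in $L^\infty(0,T;H^\sigma(\mathbb{R}^d))$-weak$^*$ (hence give a uniform $H^\sigma$ bound, but no uniform bound on $D^sg_s$ for $s>\sigma$). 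Consequently your claims that $\sup_s\norm{D^s\vartheta_s}_{L^2}<\infty$, that $\vartheta_s$ is bounded in $L^\infty(0,T;H^\sigma(\Omega))$ via \eqref{ConvergenceEq1}, and that $D^s\vartheta_s\rightharpoonup D\vartheta$ in $L^2(\mathbb{R}^d\times(0,T))^d$ are not justified as stated, and the central limit passage in the bilinear form rests on them. The paper avoids this by working throughout with $\vartheta_s-g_s\in H^s_0(\Omega)$: since $g_s$ is $\mathcal{L}^s_A$-harmonic in $\Omega$ by the construction \eqref{DirichletBdryCond}, one has $\int_{\mathbb{R}^d}AD^s\vartheta_s\cdot D^s\xi=\int_{\mathbb{R}^d}AD^s(\vartheta_s-g_s)\cdot D^s\xi$ for every $\xi(t)\in H^1_0(\Omega)\subset H^s_0(\Omega)$, so it suffices to extract $D^s(\vartheta_s-g_s)\rightharpoonup D(\vartheta-g)$ by exactly your duality argument, conclude $\vartheta-g\in L^\infty(0,T;H^1_0(\Omega))$, and run the compactness argument on $\vartheta_s-g_s$, recovering \eqref{thetasconvg} from the assumed convergence of $g_s$. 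Your argument goes through after this substitution, so the gap is fixable, but as written the key estimates are applied to an object to which they do not apply.
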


\begin{proof}
Recall that $\eta_s\in L^\infty(0,T;L^2(\Omega))\cap H^1(0,T;H^{-1}(\Omega))$ independent of $\phi_t$, by Remark \ref{AttouchDamlamianThmContinuity}. Moreover, invoking the continuity of the inclusions $H^{-\sigma}(\Omega)\subset H^{-s}(\Omega)\subset H^{-1}(\Omega)$, we have, by \eqref{Estimate3}, \begin{equation}\label{EstEq1}\norm{\eta_s}_{L^\infty(0,T;L^2(\Omega))}\leq C_4\left(\norm{\eta_{0,s}}_{L^2(\Omega)}, \norm{f_s}_{L^2(0,T;H^{-s}(\Omega))},\norm{g_s}_{BV(0,T;L^2(\Omega))}\right)\leq C_4'\end{equation} for a constant $C_4'$ depending on $\sigma$ but independent of $s$ by assumption \eqref{sConvgAssumpConvg}. Then, by Lemma \ref{Poincare}, \[\norm{f_s}_{L^2(0,T;H^{-s}(\Omega))}\leq \frac{C_P}{\sigma}\norm{f_s}_{L^2(0,T;L^2(\Omega))}.\]  Similarly, by \eqref{Estimate2} and \eqref{sConvgAssumpConvg}, \begin{equation}\label{EstEq2}\frac{1}{c_1}\norm{\frac{\partial \eta_s}{\partial t}}_{L^2(0,T;H^{-1}(\Omega))}\leq\norm{\frac{\partial \eta_s}{\partial t}}_{L^2(0,T;H^{-s}(\Omega))}\leq C_2\left(\norm{\eta_{0,s}}_{L^2(\Omega)}, \norm{f_s}_{L^2(0,T;L^2(\Omega))},\norm{g_s}_{BV(0,T;L^2(\Omega))}\right).\end{equation} Therefore, $\eta_s$ is bounded in $L^\infty(0,T;L^2(\Omega))\cap H^1(0,T;H^{-1}(\Omega))$ uniformly with respect to $s$, and, up to a subsequence, $(\eta_s)_s$ is converging in $H^1(0,T;H^{-1}(\Omega))$-weak and in $L^\infty(0,T;L^2(\Omega))$-weak$^*$ to some $\eta$ as in \eqref{etasconvg}.

Furthermore, for $\vartheta_s-g_s\in L^\infty(0,T;H^s_0(\Omega))$, we have  \begin{equation}\label{ConvergenceEq1Stefan}\norm{D^s(\vartheta_s-g_s)}_{L^\infty(0,T;L^2(\mathbb{R}^d)^d)}\leq C\end{equation} by \eqref{sConvgAssumpConvg} and Remarks \ref{GalerkinEstRemark} and \ref{TempSBoundedRemark} for some constant $C$ independent of $s$ depending on $\sigma\leq s$ and on the data. By the Poincar\'e inequality, $\vartheta_s-g_s$ is also bounded, so \[\vartheta_s-g_s\xrightharpoonup[s\nearrow1]{}\vartheta-g \text{ in }L^\infty(0,T;L^2(\mathbb{R}^d))\text{-weak$^*$}\quad\text{ and }\quad D^s(\vartheta_s-g_s)\xrightharpoonup[s\nearrow1]{}\zeta \text{ in }L^\infty(0,T;L^2(\mathbb{R}^d)^d)\text{-weak$^*$}\] for some $\vartheta,\zeta$. 

\sloppy Now, by the convergence Lemma \ref{sContDepStefan}, for all $\Phi\in L^2(0,T;C_c^\infty(\Omega)^d)$, denoting by $\tilde{\Phi}$ the zero extension of $\Phi$ outside $\Omega$, \[D^s\cdot\Phi\xrightarrow[s\nearrow1]{} \widetilde{D\cdot\Phi}=D\cdot\tilde{\Phi}\quad\text{ in }L^2(0,T;L^2(\mathbb{R}^d)^d),\] therefore, \[\int_0^T\int_{\mathbb{R}^d} D^s(\vartheta_s-g_s)\cdot\tilde{\Phi}=-\int_0^T\int_{\mathbb{R}^d} (\vartheta_s-g_s)(D^s\cdot\Phi)\xrightarrow[s\nearrow1]{}-\int_0^T\int_{\mathbb{R}^d} (\vartheta-g)\widetilde{(D\cdot\Phi)}.\] 
But by the a priori estimate on $D^s(\vartheta_s-g_s)$, \[\left|\int_0^T\int_{\mathbb{R}^d} D^s(\vartheta_s-g_s)\cdot\Phi\right|\leq C\norm{\Phi}_{L^2(0,T;L^2(\mathbb{R}^d)^d)},\] which implies, in the limit, that \[\left|\int_0^T\int_\Omega (\vartheta-g)(D\cdot\Phi)\right|=\left|\int_0^T\int_{\mathbb{R}^d} (\vartheta-g)\widetilde{(D\cdot\Phi)}\right|\leq C\norm{\Phi}_{L^2(0,T;L^2(\mathbb{R}^d)^d)}.\] Therefore we have $D(\vartheta-g)\in L^2(0,T;[L^2(\Omega)]^d)$
and hence
\[-\int_0^T\int_{\mathbb{R}^d} (\vartheta-g)\widetilde{(D\cdot\Phi)}=\int_0^T\int_{\mathbb{R}^d} D(\vartheta-g)\cdot\tilde{\Phi} \]
so $\zeta=D(\vartheta-g)$. Moreover, since $\vartheta-g=w-\lim_{s\nearrow1}(\vartheta_s-g_s)=0$ outside $\Omega$, and the boundary of $\Omega$ being Lipschitz, we may conclude $\vartheta-g\in L^\infty(0,T;H^1_0(\Omega))$.

We claim that $(\eta,\vartheta)$ satisfies the Stefan-type problem for $s=1$. Indeed, for any $\xi\in\Xi^1_T\subset \Xi^s_T$, \begin{multline*}-\int_{Q_T}\eta\frac{\partial \xi}{\partial t}+\int_0^T\int_\Omega AD(\vartheta-g)\cdot D\xi=-\int_{Q_T} \eta\frac{\partial \xi}{\partial t}+ \int_0^T\int_{\mathbb{R}^d}AD(\vartheta-g)\cdot \widetilde{D\xi}\\=\lim_{s\nearrow1}\left\{-\int_{Q_T} \eta_s \frac{\partial \xi}{\partial t}+\int_0^T\int_{\mathbb{R}^d}AD^s(\vartheta_s-g_s)\cdot D^s\xi\right\}=\lim_{s\nearrow1}\left\{\int_{Q_T} f\xi+\int_\Omega\eta_{0,s}\xi(0)\right\}=\int_{Q_T} f\xi+\int_\Omega\eta_0\xi(0)\end{multline*} since $D^s(\vartheta_s-g_s)\rightharpoonup D(\vartheta-g)$ in $L^\infty(0,T;L^2(\mathbb{R}^d)^d)$-weak$^*$, $\eta_s\rightharpoonup \eta$ in  $L^\infty(0,T;L^2(\Omega))$-weak$^*$, and $D^s\xi\to \widetilde{D\xi}$ strongly in $L^2(0,T;L^2(\mathbb{R}^d)^d)$ by Lemma \ref{sContDepStefan}. Therefore, $(\eta,\vartheta)$ satisfies \eqref{ClassicalStefanSLimit}.

Moreover, by Remark \ref{TempSBoundedRemark}, $\frac{\partial \vartheta_s}{\partial t}$ is bounded in $L^2(0,T;L^2(\Omega))$, so we can take the limit as $s\nearrow1$ to obtain that \[\frac{\partial \vartheta_s}{\partial t}\rightharpoonup\frac{\partial \vartheta}{\partial t}\text{ in }L^2(0,T;L^2(\mathbb{R}^d))\text{-weak}.\] Since $\frac{\partial g_s}{\partial t}\rightharpoonup \frac{\partial g}{\partial t}$ in $L^2(0,T;L^2(\mathbb{R}^d))$-weak, \[\vartheta_s-g_s\rightharpoonup\vartheta-g\text{ in } L^\infty(0,T;H^\sigma_0(\Omega))\text{-weak$^*$ and in } H^1(0,T;L^2(\Omega))\text{-weak}\] as $s\nearrow1$, and so by compactness (see, for instance, Corollary 4 of \cite{SimonCompactness}), \[\vartheta_s-g_s\to\vartheta-g\text{ in }C([0,T];L^2(\Omega)),\] giving the convergence \eqref{thetasconvg} as desired using the convergence of $g_s$ to $g$ in \eqref{sConvgAssumpConvg}.

Finally, it remains to show that $\vartheta=\gamma(\eta)$ a.e. in $\Omega\times]0,T[$, or equivalently $\eta\in\beta(\vartheta)$. Indeed, since $\vartheta_s=\gamma(\eta_s)$ a.e. in $\Omega\times]0,T[$ with $\eta_s\rightharpoonup \eta$ weakly in $L^2(0,T;L^2(\Omega))$ and $\vartheta_s\to\vartheta$ in $C([0,T];L^2(\Omega))$, by the maximal monotonicity of $\beta$ (see, for instance, Proposition 2.5 of \cite{BrezisBook1973}), we have $\eta\in\beta(\vartheta)$ and $\eta_0\in\beta(\vartheta(0))$ satisfying \eqref{ClassicalStefanSLimit}. Subsequently, we obtain the solution $\vartheta=\gamma(\eta)$ a.e. in $\Omega\times]0,T[$, with initial condition $\vartheta(0)=\lim_{s\nearrow1}\gamma(\eta_{0,s})=\gamma(\eta_0)$ by the convergence of $\eta_{0,s}$ to $\eta_0$ in $L^2(\Omega)$.

\end{proof}

\section{Asymptotic Behaviour as $t\to\infty$}\label{Sect:AsympBehavT}

In this section, we derive the asymptotic behaviour of the weak solutions as $t\to\infty$, following the approach of the classical case in \cite{DamlamianKenmochi1986Asymptotic}. We first begin with a well-known asymptotic convergence result for the solutions of differential equations with maximal monotone operators.

\begin{proposition}[See, for instance, Theorem 3.11 of \cite{BrezisBook1973}]\label{BrezisThmAsyptotic} Let $\varphi$ be a lower semi-continuous convex functional on a Hilbert space $H$. Suppose that for all $C\in\mathbb{R}$, the set $\{x\in H:\varphi(x)+|x|^2\leq C\}$ is compact. Let $f_\infty\in H$ and let $f(t)$ be a function such that $f-f_\infty\in L^1(t_0,\infty;H)$. Suppose $U\in C(t_0,\infty;H)$ is a weak solution to the equation $\frac{dU}{dt}+\partial\varphi(U)\ni f$. Then $\lim_{t\to+\infty} U(t)=U_\infty$ in $H$ exists and $f_\infty\in\partial\varphi(U_\infty)$.
\end{proposition}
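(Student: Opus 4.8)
The plan is to run the classical LaSalle-type argument for gradient flows: reduce to the autonomous case, use the contraction (monotonicity) structure of $\partial\varphi$ to bound the orbit and to push the energy down to its infimum along a sequence of times, and then use the compactness hypothesis to extract a stationary limit which turns out to be the limit of the whole trajectory. First I would absorb the limiting force by setting $\tilde\varphi(x):=\varphi(x)-(f_\infty,x)$, again proper, convex and lower semi-continuous with $\partial\tilde\varphi=\partial\varphi-f_\infty$, so that the equation becomes $U'+\partial\tilde\varphi(U)\ni h$ with $h:=f-f_\infty\in L^1(t_0,\infty;H)$, and the target $f_\infty\in\partial\varphi(U_\infty)$ is precisely $0\in\partial\tilde\varphi(U_\infty)$, i.e. $U_\infty\in\operatorname{argmin}\tilde\varphi=:S$. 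The two inputs I would use are: that $S\neq\emptyset$, which beyond the stated compactness requires the coercivity of $\varphi$ available in the application (cf. \eqref{SubdiffuEst}); and that the weak solution $U$ is in fact a strong solution on $(t_0,\infty)$ — locally absolutely continuous with $U'(t)$ existing a.e. and $\xi(t):=h(t)-U'(t)\in\partial\tilde\varphi(U(t))$ a.e. — which on each finite interval is exactly the regularity furnished by Proposition \ref{AttouchDamlamianThm}.

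The engine is the estimate obtained by testing against a fixed $v\in S$, so that $0\in\partial\tilde\varphi(v)$ and $\tilde\varphi(v)=\min\tilde\varphi$. Absolute continuity gives $\tfrac12\frac{d}{dt}|U-v|^2=(U',U-v)=(h,U-v)-(\xi,U-v)$, and the subgradient inequality $(\xi,U-v)\ge\tilde\varphi(U)-\min\tilde\varphi\ge0$ yields
\[
\tfrac12\frac{d}{dt}|U-v|^2+\big(\tilde\varphi(U(t))-\min\tilde\varphi\big)\le(h,U-v)\le|h(t)|\,|U(t)-v|.
\]
Discarding the nonnegative energy term gives $\frac{d}{dt}|U-v|\le|h|$ a.e., so $t\mapsto|U(t)-v|-\int_{t_0}^t|h|$ is non-increasing and bounded below; since $h\in L^1$ this simultaneously shows that $U$ is bounded and that $\lim_{t\to\infty}|U(t)-v|$ exists for every $v\in S$. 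Retaining instead the energy term and integrating over $(t_0,\infty)$, using $\sup_t|U(t)-v|<\infty$, gives $\int_{t_0}^\infty(\tilde\varphi(U(t))-\min\tilde\varphi)\,dt<\infty$, hence a sequence $t_n\to\infty$ along which $\tilde\varphi(U(t_n))\to\min\tilde\varphi$.

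It remains to extract and identify the limit. Along $\{t_n\}$ both $|U(t_n)|$ and $\tilde\varphi(U(t_n))$ are bounded, so $\varphi(U(t_n))+|U(t_n)|^2$ is bounded and $\{U(t_n)\}$ lies in a single compact sublevel set of $\varphi+|\cdot|^2$; passing to a subsequence, $U(t_{n_k})\to a$ strongly in $H$. Lower semi-continuity forces $\tilde\varphi(a)\le\liminf_k\tilde\varphi(U(t_{n_k}))=\min\tilde\varphi$, whence $a\in S$, i.e. $f_\infty\in\partial\varphi(a)$. Applying the convergence of $|U(t)-v|$ with the admissible choice $v=a$ then shows $\lim_{t\to\infty}|U(t)-a|$ exists and, being $0$ along the subsequence, equals $0$; therefore $U(t)\to U_\infty:=a$ strongly in $H$, which is the assertion.

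The main obstacle is the regularity step flagged at the outset — justifying that the weak solution is a strong solution on all of $(t_0,\infty)$, so that the differential inequality for $|U-v|^2$ and the a.e. inclusion $\xi(t)\in\partial\tilde\varphi(U(t))$ are legitimate; this rests on the parabolic smoothing intrinsic to subdifferential flows and on patching the finite-interval regularity of Proposition \ref{AttouchDamlamianThm}. Securing $S\neq\emptyset$ from coercivity is the other delicate point in general, but in the present setting it is immediate; the remaining extraction and monotonicity steps are then routine.
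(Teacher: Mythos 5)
The paper offers no proof of this proposition: it is imported verbatim from Theorem 3.11 of \cite{BrezisBook1973} and used as a black box in the proof of Theorem \ref{EnthalpyAsymptotic}, so there is nothing in the paper to compare your argument against; I therefore assess it on its own merits. Your proof is the standard Bruck--LaSalle argument for subdifferential flows, and its engine is correct: the shift to $\tilde\varphi=\varphi-(f_\infty,\cdot)$, the inequality $\tfrac12\tfrac{d}{dt}|U-v|^2+\bigl(\tilde\varphi(U)-\min\tilde\varphi\bigr)\le|h|\,|U-v|$ for $v\in S$, boundedness of the orbit and existence of $\lim_{t\to\infty}|U(t)-v|$, integrability of the energy excess, strong subsequential compactness from the sublevel-set hypothesis, identification of the limit point as a minimizer by lower semicontinuity, and the upgrade to convergence of the full trajectory by choosing $v=a$.

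Two remarks on the caveats you flag, which are the substantive issues. First, your insistence that $S\neq\emptyset$ must be supplied is correct and is not a blemish: the proposition as quoted is actually false without some such hypothesis (take $H=\mathbb{R}$, $\varphi\equiv0$, $f\equiv f_\infty=1$; all sublevel sets of $\varphi+|\cdot|^2$ are compact and $f-f_\infty\in L^1$, yet $U(t)=U_0+t$ has no limit). A hypothesis equivalent to $f_\infty\in R(\partial\varphi)$, which is exactly $S\neq\emptyset$, belongs to the original theorem, and in the paper's application it is verified separately (solvability of the stationary problem \eqref{FracStefanStationary} by Lax--Milgram at the start of the proof of Theorem \ref{EnthalpyAsymptotic}); so your repair matches how the proposition is actually used. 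Second, your justification of the strong-solution property via ``parabolic smoothing'' is the one step that would not survive scrutiny at the stated level of generality: the smoothing effect for $\partial\varphi$-flows requires forcing in $L^2_{\mathrm{loc}}$, whereas the hypothesis gives only $f-f_\infty\in L^1$. The clean repair is not smoothing but approximation: prove your two inequalities for strong solutions and pass to the limit in the very definition of a weak solution (uniform limit of strong solutions with forcings converging in $L^1$), using lower semicontinuity of $\tilde\varphi$ and Fatou for the energy term, i.e.\ work with the integral (B\'enilan-type) inequality that weak solutions inherit. In the paper's application this subtlety disappears anyway, since Proposition \ref{AttouchDamlamianThm} delivers solutions in $H^1(0,T;H^{-s}(\Omega))$ on every finite interval, which are genuinely strong, as you note.
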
 

With this proposition, we can directly obtain the convergence of the generalised enthalpy solutions $\eta(t)\to\eta_\infty$ in the case where $\tilde{g}(t)=\tilde{g}_\infty$ for all $t\geq t_0$, i.e. the Dirichlet data is independent of time, with $f(t)-f_\infty\in L^1(t_0,\infty;H^{-s}(\Omega))$. For more general $\tilde{g}(t)$ converging to some $\tilde{g}_\infty$, we may also have a characterisation of the asymptotic behaviour of the generalised enthalpy solution towards the stationary solution, which can be written in terms of the stationary Dirichlet problem $\vartheta_\infty=g_\infty$ in $\Omega^c$ for the temperature $\vartheta_\infty$: 
\begin{equation}\label{FracStefanStationary}\int_{\mathbb{R}^d}AD^s\vartheta_\infty\cdot D^s\xi=\left\langle f_\infty,\xi\right\rangle,\quad\forall \xi\in H^s_0(\Omega).\end{equation}

\begin{theorem}\label{EnthalpyAsymptotic}
Let $f$, $\tilde{g}$ and $\eta_0$ satisfy the assumptions in Theorem \ref{VarStefanThm1} such that $f-f_\infty\in L^1(0,\infty;H^{-s}(\Omega))\cap L^2(0,\infty;H^{-s}(\Omega))$ and $\tilde{g}-\tilde{g}_\infty\in W^{1,1}(0,\infty;L^2(\Omega))$ for given $f_\infty\in H^{-s}(\Omega)$ and $\tilde{g}_\infty\in H^s(\mathbb{R}^d)$. (We can subsequently define $g_\infty$ and $g(t)$ in the same spaces using \eqref{DirichletBdryCond} as explained in the Appendix \ref{Sect:DirBdryCondAp}.) Let $\eta\in\beta(\vartheta)$ be the generalised enthalpy solution to the fractional Stefan-type problem \eqref{FracStefanEta} for all $T>0$. 
Then, there exists an $\eta_\infty\in L^2(\Omega)$ such that \[\eta(t)\to\eta_\infty\text{ strongly in }H^{-s}(\Omega)\text{ and weakly in }L^2(\Omega) \text{ as }t\to\infty, \] where $\eta_\infty$ is such that $\vartheta_\infty=\gamma(\eta_\infty)$ satisfies \eqref{FracStefanStationary} with $\vartheta_\infty=g_\infty$ in $\Omega^c$. 
\end{theorem}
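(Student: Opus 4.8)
The plan is to compare the evolving enthalpy $\eta(t)$ with the stationary state and to treat the decay of the data $f-f_\infty$ and $g-g_\infty$ as an integrable-in-time perturbation, in the spirit of the continuous-dependence estimate \eqref{ContDepFull} and of the Damlamian--Kenmochi analysis. First I would solve the stationary problem: by Lax--Milgram (the form $[\cdot,\cdot]_A$ is coercive by \eqref{Acoer} and Lemma \ref{Poincare}) there is a unique $\vartheta_\infty\in H^s(\mathbb{R}^d)$ with $\vartheta_\infty-g_\infty\in H^s_0(\Omega)$ solving \eqref{FracStefanStationary}, i.e. $\mathcal{L}\vartheta_\infty=f_\infty$ in $H^{-s}(\Omega)$. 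I fix a selection $\bar\eta_\infty\in\beta(\vartheta_\infty)\cap L^2(\Omega)$, so that $\gamma(\bar\eta_\infty)=\vartheta_\infty$ and the constant map $t\mapsto\bar\eta_\infty$ is a generalised solution for the data $(f_\infty,g_\infty,\bar\eta_\infty)$. Since $\phi_\infty(W)=\int_\Omega\!\big(j(W)-g_\infty W\big)$ is coercive in $L^2(\Omega)$, its sublevel sets $\{\phi_\infty+\|\cdot\|_{H^{-s}}^2\le C\}$ are bounded in $L^2(\Omega)$, hence precompact in $H^{-s}(\Omega)$ by the compact embedding; thus Proposition \ref{BrezisThmAsyptotic} applies to the autonomous equation and produces the stationary relation $f_\infty\in\partial\phi_\infty(\eta_\infty)$ as the template for the limit.

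Next I would run the dual energy identity. Subtracting the equation for $\bar\eta_\infty$ from that for $\eta$ and pairing with $\eta-\bar\eta_\infty$ in the $H^{-s}$-inner product, exactly as in Proposition \ref{ContDep}, the crucial observation is that $\mathcal{L}\big(g(t)-g_\infty\big)=0$ in $H^{-s}(\Omega)$, because both $g(t)$ and $g_\infty$ are $\mathcal{L}_A^s$-harmonic in $\Omega$ by \eqref{DirichletBdryCond}. Hence $\mathcal{L}\vartheta=\mathcal{L}v_t$ and $\mathcal{L}\vartheta_\infty=\mathcal{L}v_\infty$ with $v_t:=\vartheta-g(t)$ and $v_\infty:=\vartheta_\infty-g_\infty$ both in $H^s_0(\Omega)$, and $v_t-v_\infty\in H^s_0(\Omega)$; the monotonicity of $\gamma$ then yields
\[
\tfrac12\tfrac{d}{dt}\|\eta-\bar\eta_\infty\|_{H^{-s}}^2+\tfrac1{C_\gamma}\|\vartheta-\vartheta_\infty\|_{L^2}^2\le \big(f-f_\infty,\,\eta-\bar\eta_\infty\big)_{H^{-s}}+\int_\Omega (g-g_\infty)(\eta-\bar\eta_\infty).
\]
The harmonic cancellation is what confines the time-dependence of the boundary data to the lower-order coupling $\int_\Omega(g-g_\infty)(\eta-\bar\eta_\infty)$, rather than to the (ill-defined) subdifferential $\partial\phi_\infty$ evaluated at the solution.

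I would then extract the asymptotics. From $f-f_\infty\in L^1(0,\infty;H^{-s}(\Omega))$ and, via $\tilde g-\tilde g_\infty\in W^{1,1}(0,\infty;L^2)$, from $g-g_\infty\in L^1(0,\infty;L^2(\Omega))$ with $g(t)\to g_\infty$, both right-hand integrals carry $L^1$-in-time weights once $\eta$ is bounded in $L^\infty(0,\infty;L^2(\Omega))$. Granting that bound and integrating, the quantity $t\mapsto\|\eta(t)-\bar\eta_\infty\|_{H^{-s}}^2+\int_t^\infty(\cdots)$ is nonincreasing, so $\lim_{t\to\infty}\|\eta(t)-\bar\eta_\infty\|_{H^{-s}}$ exists and $\int_0^\infty\|\vartheta-\vartheta_\infty\|_{L^2}^2<\infty$. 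The uniform $L^2$-bound gives, along any $t_n\to\infty$, a weak $L^2$-limit point $\eta_*$; passing along a sequence on which $\vartheta\to\vartheta_\infty$ strongly in $L^2$ (afforded by the finite dissipation integral) and invoking maximal monotonicity of $\beta$ as at the end of the proof of Theorem \ref{ConvergenceThmSStefan} forces $\eta_*\in\beta(\vartheta_\infty)$, i.e. $\gamma(\eta_*)=\vartheta_\infty$. The existence of $\lim_t\|\eta(t)-\bar\eta_\infty\|_{H^{-s}}$ then promotes weak to strong convergence in $H^{-s}(\Omega)$ and rules out distinct limit points, so the full family converges to a single $\eta_\infty$ with $\vartheta_\infty=\gamma(\eta_\infty)$ solving \eqref{FracStefanStationary}, and $\eta(t)\rightharpoonup\eta_\infty$ weakly in $L^2(\Omega)$.

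I expect two points to be the crux. The first is the uniform-in-time $L^2(\Omega)$ bound on $\eta$: the finite-horizon estimate \eqref{Estimate3} degrades as $T\to\infty$ because of the constant forcing $f_\infty$, so the bound cannot be read off directly and must be produced from the stationary comparison, carefully absorbing the $g$-coupling — most plausibly by combining the dual estimate above with the coercivity \eqref{LinftyEst} and the at-most-linear growth of $\beta$ encoded in \eqref{gammacond}. The second, genuinely delicate because $\gamma$ may have flat parts, is the identification of the $L^2$-weak limit of $\eta$: smallness of $\vartheta-\vartheta_\infty$ in $L^2$ does \emph{not} control $\eta-\eta_\infty$, so one cannot simply pass to the limit in the constitutive relation and must instead close the graph inclusion $\eta_*\in\beta(\vartheta_\infty)$ by monotonicity along a well-chosen time sequence, using the already-established convergence of the $H^{-s}$-norm to guarantee uniqueness of the limit.
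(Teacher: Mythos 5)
Your route is genuinely different from the paper's and is essentially viable. The paper proves convergence by approximating the non-autonomous problem on tail intervals $[t_\epsilon,\infty[$ by the autonomous problem with data $(f_\infty,g_\infty)$ and initial value $\eta(t_\epsilon)$, applying Proposition \ref{BrezisThmAsyptotic} to that autonomous flow to obtain a limit $w_\epsilon^\infty$ solving \eqref{FracStefanStationary}, and then using the continuous-dependence inequality to get $\norm{\eta(t)-w_\epsilon(t)}_{H^{-s}(\Omega)}^2\leq K\epsilon$, whence $\eta(t)$ is Cauchy in $H^{-s}(\Omega)$ and its limit inherits stationarity from $w_\epsilon^\infty$. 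You instead compare directly with one fixed stationary selection $\bar\eta_\infty\in\beta(\vartheta_\infty)\cap L^2(\Omega)$, observe (correctly, and this is the same cancellation the paper exploits in Remark \ref{EnthalpyContDepGeneral}) that $\mathcal{L}(g(t)-g_\infty)=0$, so the boundary data enters only through the lower-order $L^2$-coupling, and derive a differential inequality with $L^1(0,\infty)$ right-hand side; this yields existence of $\lim_{t\to\infty}\norm{\eta(t)-\bar\eta_\infty}_{H^{-s}(\Omega)}$ and $\int_0^\infty\norm{\vartheta-\vartheta_\infty}_{L^2(\Omega)}^2\,dt<\infty$, and you identify a limit point by monotone-graph closure along a well-chosen sequence. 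What this buys is that Brezis' asymptotic theorem and the $\epsilon$-family $w_\epsilon$ become unnecessary; what it costs is that full-family convergence must be extracted by hand at the end. Note that both proofs lean equally on the unproved uniform bound $\sup_{t\geq0}\norm{\eta(t)}_{L^2(\Omega)}\leq M$: the paper cites Damlamian--Kenmochi for \eqref{EnthalpyRegInf}, and your sketch of how to produce it is plausible but no more detailed than that citation.

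The one step that does not follow as written is the last one: existence of $\lim_{t\to\infty}\norm{\eta(t)-\bar\eta_\infty}_{H^{-s}(\Omega)}$ for the \emph{single} reference $\bar\eta_\infty$ neither promotes weak to strong convergence nor rules out distinct limit points --- a trajectory can keep an asymptotically constant $H^{-s}$-distance to $\bar\eta_\infty$ while oscillating between several points on that sphere, and since $\gamma$ has flat parts, $\beta(\vartheta_\infty)$ is not a singleton, so several such limit points could all satisfy the graph relation. The repair is immediate within your own framework: your differential inequality holds for \emph{any} stationary reference, because any $\eta_*\in\beta(\vartheta_\infty)\cap L^2(\Omega)$ makes the constant map $t\mapsto\eta_*$ a generalised solution with data $(f_\infty,g_\infty,\eta_*)$. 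So take $\eta_*$ to be the limit point produced along your good sequence $t_n$ (where strong subsequential $H^{-s}(\Omega)$ convergence comes for free from the compact embedding $L^2(\Omega)\hookrightarrow H^{-s}(\Omega)$ and the uniform $L^2$ bound, not from any norm argument), and rerun the estimate with $\eta_*$ in place of $\bar\eta_\infty$: then $\lim_{t\to\infty}\norm{\eta(t)-\eta_*}_{H^{-s}(\Omega)}$ exists, equals $0$ along $t_n$, hence equals $0$, and the whole trajectory converges strongly in $H^{-s}(\Omega)$ to $\eta_\infty:=\eta_*$; weak $L^2(\Omega)$ convergence of $\eta(t)$ then follows from the uniform bound and the uniqueness of the $H^{-s}$ limit. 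With this correction your argument is complete and delivers exactly the conclusion of the theorem.
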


\begin{proof}
We first note that, while $\eta_\infty$ is not unique in general, there exists a unique weak temperature solution $\vartheta_\infty=\gamma(\eta_\infty)$ to \eqref{FracStefanStationary} with $\vartheta_\infty=g_\infty$ in $\Omega^c$ by the Riesz representation theorem for $A$ coercive and bounded, since we have the equivalent norms \eqref{Hs0EquivNorm} in $H^s_0(\Omega)$.

Furthermore, under our assumptions, by a similar approach to the Proposition 3.2 and its Corollary in \cite{DamlamianKenmochi1986Asymptotic}, there is a positive constant $M$ such that \begin{equation}\label{EnthalpyRegInf}\sup_{t\geq0}\norm{\eta(t)}_{L^2(\Omega)}\leq M.\end{equation}

Let $\epsilon$ be any positive number. Since $g$ is bounded, we can take a number $t_\epsilon$ such that \[\int_{t_\epsilon}^\infty\norm{g(\tau)-g_\infty}_{L^2(\Omega)}+ \norm{f(\tau)-f_\infty}_{H^{-s}(\Omega)}\,d\tau\leq\epsilon.\] Also, let $w_\epsilon$ be the solution of the fractional Stefan-type problem \eqref{FracStefanEta} corresponding to $(f_\infty,g_\infty)$ on $[t_\epsilon,\infty[$ with initial value $w_\epsilon(t_\epsilon)=\eta(t_\epsilon)$, i.e. \begin{equation}\frac{d w_\epsilon}{dt}(t)+\partial\phi_\infty(w_\epsilon)=\frac{d w_\epsilon}{dt}(t)+\mathcal{L}_A^s(\gamma(w_\epsilon(t))-g_\infty)=f_\infty.\end{equation} By Proposition \ref{BrezisThmAsyptotic} in the interval $[t_\epsilon,\infty[$ with $H=H^{-s}(\Omega)$ and $\varphi$ given by the convex functional $\phi_\infty$ as defined in \eqref{ConvexFunctional} for the Dirichlet boundary condition $g_\infty$, since the set $\{W\in H^{-s}(\Omega):\phi_\infty(W)+|W|^2\leq C\}$ is a bounded set in $L^2$ and therefore compact in $H^{-s}(\Omega)$, we have that \[w_\epsilon(t) \text{ converges in }H^{-s}(\Omega)\text{ as }t\to\infty\text{ to a point }w_\epsilon^\infty\in L^2(\Omega)\] satisfying \begin{equation}\label{wepsilonStatProb}
f_\infty\in\partial\phi_\infty(w_\epsilon^\infty), \quad \text{or equivalently} \quad \mathcal{L}_A^s(\gamma(w_\epsilon^\infty)-g_\infty)=f_\infty.\end{equation} 
Therefore, there is a number $t'_\epsilon\geq t_\epsilon$ such that \[\norm{w_\epsilon(t)-w_\epsilon(\tau)}_{H^{-s}(\Omega)}\leq\epsilon\quad\forall t,\tau\geq t'_\epsilon.\] Also, as in Remark \ref{EnthalpyContDepGeneral}, we have that \begin{multline*}\frac{1}{2}\frac{d}{d\tau}\norm{\eta(\tau)-w_\epsilon(\tau)}_{H^{-s}(\Omega)}^2+\frac{2}{C_\gamma}\norm{\gamma(\eta)(\tau)-\gamma(w_\epsilon)(\tau)}_{L^2(\Omega)}^2\\\leq2\left( f(\tau)-f_\infty,\eta(\tau)-w_\epsilon(\tau)\right)+2\int_\Omega (g(\tau)-g_\infty)(\eta(\tau)-w_\epsilon(\tau)),\end{multline*} so in particular, \[\frac{d}{d\tau}\norm{\eta(\tau)-w_\epsilon(\tau)}_{H^{-s}(\Omega)}^2\leq K\left(\norm{f(\tau)-f_\infty}_{H^{-s}(\Omega)}+\norm{g(\tau)-g_\infty}_{L^2(\Omega)}\right)\] for some constant $K$ for a.e. $\tau\geq t_\epsilon$. 
Integrating both sides over $[t_\epsilon,t]$, we have \begin{equation}\label{AsymptoticHigherRegEst}\norm{\eta(t)-w_\epsilon(t)}_{H^{-s}(\Omega)}^2\leq K\epsilon\end{equation} for any $t\geq t_\epsilon$. Therefore, if $t,s\geq t'_\epsilon$, \[\norm{\eta(t)-\eta(s)}_{H^{-s}(\Omega)}\leq \norm{\eta(t)-w_\epsilon(t)}_{H^{-s}(\Omega)}+\norm{w_\epsilon(t)-w_\epsilon(s)}_{H^{-s}(\Omega)}+\norm{w_\epsilon(s)-\eta(s)}_{H^{-s}(\Omega)}\leq 2\sqrt{K\epsilon}+\epsilon.\] This implies that $\eta(t)$ converges in $H^{-s}(\Omega)$ as $t\to\infty$ to some $\eta_\infty\in H^{-s}(\Omega)$. Also, since \eqref{AsymptoticHigherRegEst} holds for all $t\geq t_\epsilon$ and $\lim_{t\to\infty}w_\epsilon(t)=w_\epsilon^\infty$, we have that $w_\epsilon^\infty\to \eta_\infty$ in $H^{-s}(\Omega)$ as $\epsilon\searrow0$. Since $w_\epsilon^\infty$ satisfies \eqref{wepsilonStatProb}, so does $\eta_\infty$.

Finally, defining $\vartheta_\infty=\gamma(\eta_\infty)$, taking the limit in $\epsilon$ in \eqref{wepsilonStatProb}, we have $\vartheta_\infty=(\mathcal{L}_A^s)^{-1}f_\infty+g_\infty$.
\end{proof}

\begin{remark}\label{EnthalpyRegAsymp}
In addition, if we assume $f-f_\infty\in W^{1,1}(0,\infty;H^{-s}(\Omega))$, we have that the solution $\eta$ to the fractional Stefan-type problem \eqref{FracStefanEta} satisfies $\eta-\eta_\infty\in H^1(0,\infty,H^{-s}(\Omega))$. This follows as in the proof of Theorem 2.1 of \cite{DamlamianKenmochi1986Asymptotic}, and it can be shown that the energy functional $J(t)$ given by \[J(t):=\phi_t(\eta(t))+\int_0^t\norm{\frac{d\eta}{d t}(\tau)}_{H^{-s}(\Omega)}^2\,d\tau-C\int_0^t\left(\norm{\frac{\partial g}{\partial t}(\tau)}_{L^2(\Omega)}+\norm{\frac{df}{dt}(\tau)}_{H^{-s}(\Omega)}\right)\,d\tau\quad\text{ for }t\geq0\] is bounded and non-increasing on $]0,\infty[$.  So $\lim_{t\to\infty}J(t)$ exists and 
\begin{equation}\label{EnthalpyRegRemarkEq}\frac{d\eta}{d t}\in L^2(0,\infty,H^{-s}(\Omega)).\end{equation}
\end{remark}

We can also increase the regularity of $\tilde{g}$ as in Theorem \ref{VarStefanThm2HigherReg} to obtain the convergence of $\vartheta$.

\begin{theorem}\label{TempAsymptotic}
Let $f-f_\infty\in L^1(0,\infty;H^{-s}(\Omega))\cap L^2(0,\infty;L^2(\Omega))$ and $\tilde{g}-\tilde{g}_\infty\in W^{2,1}(0,\infty;L^2(\mathbb{R}^d))\cap H^1(0,\infty;L^2(\mathbb{R}^d))\cap L^2(0,\infty;H^s(\mathbb{R}^d))$ (and so similarly with $g-g_\infty$), and $\eta_0\in L^2(\Omega)$, where $f_\infty\in L^2(\Omega)$ and $\tilde{g}_\infty\in H^s(\mathbb{R}^d)$. Suppose that $\vartheta$ is the weak temperature solution to the fractional Stefan-type problem \eqref{FracStefan}, and $\vartheta_\infty$ is the stationary weak temperature solution to \eqref{FracStefanStationary} with $\vartheta_\infty=g_\infty$ in $\Omega^c$. Then \[\vartheta(t)\to\vartheta_\infty\text{ in }L^2(\Omega) \text{ and in }H^s(\mathbb{R}^d)\text{-weak}\text{ as }t\to\infty.\] 

In addition, if $f-f_\infty\in W^{1,1}(0,\infty;H^{-s}(\Omega))$, we have \[\vartheta(t)-g(t)\to\vartheta_\infty-g_\infty\text{ strongly in }H^s_0(\Omega)\text{ as }t\to\infty.\] In particular, if $g(t)\to g_\infty$ in $H^s(\mathbb{R}^d)$ as $t\to\infty$, then $\vartheta(t)\to\vartheta_\infty$ strongly in $H^s(\mathbb{R}^d)$ as $t\to\infty$.
\end{theorem}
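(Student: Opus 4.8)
The goal is to promote the enthalpy convergence $\eta(t)\to\eta_\infty$ from Theorem \ref{EnthalpyAsymptotic} to convergence of the temperatures $\vartheta(t)=\gamma(\eta(t))$, in progressively stronger topologies. The plan is to run a comparison argument analogous to the one in the proof of Theorem \ref{EnthalpyAsymptotic}, but now exploiting the higher regularity of the temperature guaranteed by Theorem \ref{VarStefanThm2HigherReg} together with the energy estimate \eqref{TimeDefEstPropBounds}. First I would record that under the strengthened hypotheses on $f,\tilde g$, Theorem \ref{VarStefanThm2HigherReg} gives $\vartheta\in L^\infty(0,\infty;H^s(\mathbb{R}^d))\cap H^1(0,\infty;L^2(\Omega))$, with the bound uniform in time; hence $\vartheta(t)-g(t)$ is bounded in $H^s_0(\Omega)$ uniformly in $t$. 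This immediately yields weak compactness: any sequence $t_n\to\infty$ has a subsequence along which $\vartheta(t_n)\rightharpoonup$ some limit in $H^s(\mathbb{R}^d)$-weak.

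\textbf{Identifying the weak limit.} The key step is to show the weak limit is exactly $\vartheta_\infty$. Since $\eta(t)\to\eta_\infty$ strongly in $H^{-s}(\Omega)$ and weakly in $L^2(\Omega)$ by Theorem \ref{EnthalpyAsymptotic}, and since the $L^2(\Omega)$-norms of $\eta(t)$ are bounded by \eqref{EnthalpyRegInf}, I would use the Lipschitz continuity of $\gamma$ to control $\vartheta(t)=\gamma(\eta(t))$. The cleanest route is the continuous-dependence estimate \eqref{ContDepResultEnthalpyTemp} (or its differential form in Remark \ref{EnthalpyContDepGeneral}) comparing $\eta$ with the solution $w_\epsilon$ of the stationary-data problem: integrating \eqref{ContDepEst0}-type inequalities controls $\int\|\vartheta(\tau)-\gamma(w_\epsilon)(\tau)\|^2_{L^2(\Omega)}$ by $\epsilon$, and since $\gamma(w_\epsilon(t))\to\gamma(w_\epsilon^\infty)$ with $w_\epsilon^\infty\to\eta_\infty$, the monotonicity/maximal-monotone-graph identification (as in the end of the proof of Theorem \ref{ConvergenceThmSStefan}, via Proposition 2.5 of \cite{BrezisBook1973}) forces the limit to satisfy $\vartheta_\infty=\gamma(\eta_\infty)$ and solve \eqref{FracStefanStationary}. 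Because $\vartheta_\infty$ is the \emph{unique} solution of the stationary problem, the whole family $\vartheta(t)$ — not just a subsequence — converges, giving $\vartheta(t)\to\vartheta_\infty$ in $L^2(\Omega)$ and $H^s(\mathbb{R}^d)$-weak.

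\textbf{Upgrading to strong $H^s_0$ convergence.} For the second assertion I would invoke the extra regularity \eqref{EnthalpyRegRemarkEq} from Remark \ref{EnthalpyRegAsymp}: under $f-f_\infty\in W^{1,1}$, we have $\tfrac{d\eta}{dt}\in L^2(0,\infty;H^{-s}(\Omega))$. The strategy is to test the evolution equation against $\vartheta(t)-g(t)-(\vartheta_\infty-g_\infty)\in H^s_0(\Omega)$ and extract the energy identity: the bilinear form term produces $[\vartheta-g-(\vartheta_\infty-g_\infty),\,\cdot\,]_A$, and subtracting the stationary equation \eqref{FracStefanStationary} yields
\[
a_*\|D^s(\vartheta(t)-g(t)-(\vartheta_\infty-g_\infty))\|_{L^2}^2\le \big|\langle \tfrac{d\eta}{dt}-(f-f_\infty),\,\vartheta-g-(\vartheta_\infty-g_\infty)\rangle\big|+\text{(terms in }\tfrac{\partial g}{\partial t}).
\]
Since $\tfrac{d\eta}{dt},\,f-f_\infty\to0$ in $H^{-s}(\Omega)$ along a sequence (by $L^2$-integrability in time), $\tfrac{\partial g}{\partial t}\to0$ in $L^2(\Omega)$, and the tested quantity is bounded in $H^s_0(\Omega)$, the right-hand side tends to zero, giving strong convergence $\vartheta(t)-g(t)\to\vartheta_\infty-g_\infty$ in $H^s_0(\Omega)$. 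The final statement is then immediate: if additionally $g(t)\to g_\infty$ in $H^s(\mathbb{R}^d)$, adding the two convergences gives $\vartheta(t)\to\vartheta_\infty$ strongly in $H^s(\mathbb{R}^d)$.

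\textbf{Main obstacle.} I expect the delicate point to be the passage for the \emph{strong} $H^s_0$ convergence: the weak convergence and uniform bounds give lower semicontinuity of the norm only in one direction, so the energy identity must be exploited carefully to obtain the matching upper bound on $\limsup_t\|D^s(\vartheta(t)-g(t))\|_{L^2}^2$. One must ensure the time-derivative pairing $\langle \tfrac{d\eta}{dt},\vartheta-g\rangle$ is handled rigorously — either along a sequence $t_n$ where $\tfrac{d\eta}{dt}(t_n)\to0$ in $H^{-s}(\Omega)$ (which exists since it is $L^2$ in time) or by a monotonicity/lower-semicontinuity argument — rather than naively differentiating the energy, since $\vartheta(t)-g(t)$ need not be differentiable in the $H^s_0$ topology. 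Pinning down this limit and confirming it equals $\vartheta_\infty-g_\infty$ (using uniqueness of the stationary solution) is where the real work lies; the $L^2$-weak statements are comparatively routine consequences of the Lipschitz bound on $\gamma$ and Theorem \ref{EnthalpyAsymptotic}.
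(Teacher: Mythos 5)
Your treatment of the first assertion (the $L^2(\Omega)$ and $H^s$-weak convergence) follows essentially the paper's own route: uniform-in-time bounds on $\vartheta-g$ in $H^s_0(\Omega)$, weak compactness, identification of every weak limit point by combining the strong $L^2$ convergence of $\vartheta(t_n)$ (compact embedding), the weak $L^2$ convergence of $\eta(t_n)$ from Theorem \ref{EnthalpyAsymptotic}, the maximal monotonicity of $\beta$, and uniqueness of the stationary solution $\vartheta_\infty$. One caveat: Theorem \ref{VarStefanThm2HigherReg} is stated on finite intervals $[0,T]$, so the uniform bound \eqref{TempRegInf} on $[0,\infty)$ is not a citation but must be re-derived by passing to the limit in the Galerkin inequality \eqref{A1CauchyProbSeqVar5} using the infinite-horizon hypotheses on $f$ and $g$, which is precisely how the paper opens its proof; your proposal asserts this bound rather than proving it.

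The genuine gap is in the strong $H^s_0(\Omega)$ convergence. Your testing argument is sound as far as it goes: subtracting \eqref{FracStefanStationary} from the evolution equation and testing with $w(t)=(\vartheta(t)-g(t))-(\vartheta_\infty-g_\infty)$ gives, by coercivity,
\[a_*\norm{w(t)}_{H^s_0(\Omega)}\leq \norm{f(t)-f_\infty}_{H^{-s}(\Omega)}+\norm{\tfrac{d\eta}{dt}(t)}_{H^{-s}(\Omega)}\]
(with no extra $\partial g/\partial t$ terms; those are already absorbed in $\mathcal{L}_A^s(\vartheta-g)$). Under the hypothesis $f-f_\infty\in W^{1,1}(0,\infty;H^{-s}(\Omega))$ the first term does tend to zero as $t\to\infty$, but the second does not: Remark \ref{EnthalpyRegAsymp} only gives $\frac{d\eta}{dt}\in L^2(0,\infty;H^{-s}(\Omega))$, which yields $\frac{d\eta}{dt}(t_n)\to0$ along \emph{some} sequence $t_n\to\infty$, not pointwise decay. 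So your argument proves strong convergence only along that particular sequence, while the theorem asserts the full limit $t\to\infty$; and since the sequence is dictated by $\frac{d\eta}{dt}$ rather than arbitrary, no subsequence-plus-uniqueness device can upgrade it. You flag exactly this in your "main obstacle" paragraph, but neither of your proposed fixes closes it: the sequential option is the shortfall just described, and lower semicontinuity of the norm under weak convergence gives $\liminf_{t}\norm{w(t)}\geq 0$ trivially and, for the norms themselves, only the inequality $\liminf_t\norm{\vartheta(t)-g(t)}_{H^s_0(\Omega)}\geq\norm{\vartheta_\infty-g_\infty}_{H^s_0(\Omega)}$, which is the wrong direction. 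The paper's missing ingredient is the energy function $E(t)$ of \eqref{EnergyFuncEAsymp}: using the limit form of \eqref{A1CauchyProbSeqVar5} one shows $E(t_2)-\int_0^{t_2}H\leq E(t_1)-\int_0^{t_1}H$ for an $L^1(0,\infty)$ function $H$, so $E(t)$ — and hence $\langle\mathcal{L}_A^s(\vartheta(t)-g(t)),\vartheta(t)-g(t)\rangle$ — converges as a \emph{full} limit. That limit is then identified as $\langle\mathcal{L}_A^s(\vartheta_\infty-g_\infty),\vartheta_\infty-g_\infty\rangle$ by exactly your sequential argument (\eqref{AsymConv}--\eqref{AsymConvDuality}), and full-family weak convergence together with full-family convergence of the equivalent $A$-norm gives the claimed strong convergence. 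Without this almost-monotone energy functional (or a substitute delivering existence of the full limit of the norms), your proof establishes a strictly weaker statement than the theorem.
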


\begin{proof} 
Let $(\eta,\vartheta)$ be the solution to the fractional Stefan-type problem \eqref{FracStefan}, so that their finite-dimensional approximations $(\eta_n,\gamma(\eta_n))$ satisfy the inequality \eqref{A1CauchyProbSeqVar5}. Since the $\eta_n$'s are uniformly bounded in $L^\infty(0,\infty;L^2(\Omega))$ by Theorem \ref{EnthalpyAsymptotic} applied to the approximated problem, we have  \[\left|\int_0^\infty\int_\Omega \tilde{\eta}_n\frac{\partial^2 g}{\partial t^2}\right|\leq\norm{\tilde{\eta}_n}_{L^\infty(0,\infty;L^2(\Omega))}\norm{\frac{\partial^2 g}{\partial t^2}}_{L^1(0,\infty;L^2(\Omega))},\]\[\lim_{t\to\infty} \int_\Omega \tilde{\eta}_n(t)\frac{\partial g}{\partial t}(t)=0\quad\text{ since }\frac{\partial g}{\partial t}\to0\text{ in }L^2(\Omega),\]\[\left|\int_\Omega \tilde{\eta}_n(0)\frac{\partial g}{\partial t}(0)\right|\leq\norm{\tilde{\eta}_n(0)}_{L^2(\Omega)}\norm{\frac{\partial g}{\partial t}(0)}_{L^2(\Omega)},\] and \begin{multline*}\left|\int_0^\infty\int_\Omega f_n\frac{\partial g}{\partial t}\right|\leq\norm{f_n}_{L^2(0,\infty;L^2(\Omega))}\norm{\frac{\partial g}{\partial t}}_{L^2(0,\infty;L^2(\Omega))}\\=\norm{\mathbb{P}_{E_n}f}_{L^2(0,\infty;L^2(\Omega))}\norm{\frac{\partial g}{\partial t}}_{L^2(0,\infty;L^2(\Omega))}\leq\frac{a^*}{a_*}\norm{f}_{L^2(0,\infty;L^2(\Omega))}\norm{\frac{\partial g}{\partial t}}_{L^2(0,\infty;L^2(\Omega))}\end{multline*} and, passing to the limit in $n$ in \eqref{A1CauchyProbSeqVar5}, we conclude \begin{equation}\label{TempRegInf}\vartheta-g\in L^\infty(0,\infty;H^s_0(\Omega))\cap H^1(0,\infty;L^2(\Omega)).\end{equation}

Let $w^*$ be any accumulation point of $\{\vartheta(t)-g(t)\}$ in $H^s_0(\Omega)$ for the weak topology as $t\to\infty$, and let $\{t_n\}_n$ be a sequence in $[0,\infty[$ such that $t_n\nearrow\infty$ and $\vartheta(t_n)-g(t_n)\rightharpoonup w^*$ weakly in $H^s_0(\Omega)$ as $n\to\infty$. Then, by the convergence of $g$ and the compactness of $H^s_0(\Omega)$ in $L^2(\Omega)$, \[\vartheta(t_n)\to w^*+g_\infty\text{ in }L^2(\Omega).\] Also, from Theorem \ref{EnthalpyAsymptotic}, there exists an $\eta_\infty$ such that \[\eta(t_n)\rightharpoonup \eta_\infty\text{ in }L^2(\Omega)\text{-weak}.\] As $\vartheta(t_n)=\gamma(\eta(t_n))$, by the property of maximal monotone operators in $L^2(\Omega)$, the limit of any subsequence as $t_n\to\infty$ satisfies \[w^*+g_\infty=\gamma(\eta_\infty)=\vartheta_\infty.\] 
Therefore, $w^*=\vartheta_\infty-g_\infty$, and we have the convergence \begin{equation}\label{AsymConvTempL2}\vartheta(t)\to \vartheta_\infty\text{ in }L^2(\Omega)\text{ as }t\to\infty\end{equation} and \begin{equation}\label{AsymConvTempWeak}\vartheta(t)-g(t)\rightharpoonup \vartheta_\infty-g_\infty\text{ in }H^s_0(\Omega)\text{-weak}\text{ as }t\to\infty.\end{equation}

In order to obtain the strong convergence in \eqref{AsymConvTempWeak}, we define the function $E(t)$ by \begin{equation}\label{EnergyFuncEAsymp}E(t):=\frac{1}{C_\gamma}\int_0^t\norm{\frac{\partial\vartheta(\tau)}{\partial t}}_{L^2(\Omega)}^2\,d\tau+\frac{1}{2}\langle\mathcal{L}_A^s(\vartheta(t)-g(t)),\vartheta(t)-g(t)\rangle-\int_\Omega\eta(t)\frac{\partial g(t)}{\partial t}-\left\langle f(t),\vartheta(t)-g(t)\right\rangle\end{equation} for $t\geq0$. Then, using again the inequality \eqref{A1CauchyProbSeqVar5} in the limit $n\to\infty$ with the integral taken over the interval $[t_1,t_2]$ and incorporating the Lipschitz property in \eqref{LipschitzTimeDef}, we obtain
\begin{multline*}\frac{1}{C_\gamma}\int_{t_1}^{t_2} \norm{\frac{\partial \vartheta(\tau)}{\partial t}}_{L^2(\Omega)}^2\,d\tau+\frac{1}{2}\langle\mathcal{L}_A^s(\vartheta(t_2)-g(t_2)),\vartheta(t_2)-g(t_2)\rangle\\-\int_\Omega \eta(t_2)\frac{\partial g}{\partial t}(t_2)-\left\langle f(t_2) ,\vartheta(t_2)-g(t_2)\right\rangle\\\leq\frac{1}{2}\langle\mathcal{L}_A^s(\vartheta(t_1)-g(t_1)),\vartheta(t_1)-g(t_1)\rangle-\int_\Omega\eta(t_1)\frac{\partial g}{\partial t}(t_1)-\left\langle f(t_1) ,\vartheta(t_1)-g(t_1)\right\rangle\\-\int_{t_1}^{t_2} \left\langle\frac{\partial f(\tau)}{\partial t} ,\vartheta(\tau)-g(\tau)\right\rangle\,d\tau-\int_{t_1}^{t_2}\int_\Omega\eta(\tau)\frac{\partial^2 g(\tau)}{\partial t^2}\,d\tau,\end{multline*} or \begin{equation}E(t_2)\leq E(t_1)-\int_{t_1}^{t_2} \left\{\left\langle\frac{\partial f(\tau)}{\partial t} ,\vartheta(\tau)-g(\tau)\right\rangle+\int_\Omega\eta(\tau)\frac{\partial^2 g(\tau)}{\partial t^2}\right\}\,d\tau.\end{equation} 
Recalling  \eqref{EnthalpyRegInf} and \eqref{TempRegInf}, we have $\eta\in L^\infty(0,T;L^2(\Omega))$ and $\vartheta-g\in L^\infty(0,T;H^s_0(\Omega))$, and so \[\int_{t_1}^{t_2}\left( \left\langle \frac{\partial f(\tau)}{\partial t} ,\vartheta(\tau)-g(\tau)\right\rangle+\int_\Omega\eta(\tau)\frac{\partial^2 g(\tau)}{\partial t^2}\right)\,d\tau\leq K_1\int_{t_1}^{t_2} \norm{\frac{\partial f(\tau)}{\partial t}}_{H^{-s}(\Omega)}\,d\tau+K_2\int_{t_1}^{t_2}\norm{\frac{\partial^2 g(\tau)}{\partial t^2}}_{L^2(\Omega)}\,d\tau\] for some constants $K_1,K_2\geq0$ for any $t_2\geq t_1\geq0$. 
Setting $H$ to be the function \[H(\cdot):=K_1\norm{\frac{\partial f(\cdot)}{\partial t}}_{H^{-s}(\Omega)}+K_2\norm{\frac{\partial^2 g(\cdot)}{\partial t^2}}_{L^2(\Omega)}\in L^1(0,\infty),\] it follows that \[E(t_2)-\int_0^{t_2}H(\tau)\,d\tau\leq E(t_1)-\int_0^{t_1}H(\tau)\,d\tau\quad\text{ for all }t_2\geq t_1\geq0.\] 
This implies that $\lim_{t\to\infty}E(t)$ exists, which we write as $E_\infty$ and, by definition \eqref{EnergyFuncEAsymp}, \begin{equation}\label{AsympLimitTemp}\lim_{t\to\infty}\langle\mathcal{L}_A^s(\vartheta(t)-g(t)),\vartheta(t)-g(t)\rangle=2E_\infty-\frac{2}{C_\gamma}\int_0^\infty\norm{\frac{\partial\vartheta(\tau)}{\partial t}}_{L^2(\Omega)}^2\,d\tau+2\left\langle f_\infty,\vartheta_\infty-g_\infty\right\rangle=:l_\infty\end{equation} since $\eta$ is bounded in $L^2(\Omega)$ and $\frac{\partial g(t)}{\partial t}\to0$ in $L^2(\mathbb{R}^d)$ as $t\to\infty$. 

Next, taking a sequence $\{t_n\}_n$ with $t_n\to\infty$ so that \[\frac{d\eta}{dt}(t_n)\to0\text{ in }H^{-s}(\Omega),\] which is always possible by \eqref{EnthalpyRegRemarkEq}, we have, recalling that $\vartheta_\infty$ is the weak temperature solution to \eqref{FracStefanStationary}, \begin{equation}\label{AsymConv}\mathcal{L}_A^s(\vartheta(t_n)-g(t_n))=f(t_n)-\frac{d\eta}{dt}(t_n)\to f_\infty=\mathcal{L}_A^s(\vartheta_\infty-g_\infty)\text{ in }H^{-s}(\Omega).\end{equation} 
Therefore, by \eqref{AsymConvTempWeak}, \eqref{AsymConv} and \eqref{AsympLimitTemp}, \begin{equation}\label{AsymConvDuality}\langle\mathcal{L}_A^s(\vartheta(t_n)-g(t_n)),\vartheta(t_n)-g(t_n)\rangle\to\langle\mathcal{L}_A^s(\vartheta_\infty-g_\infty),\vartheta_\infty-g_\infty\rangle=l_\infty.\end{equation} Finally, since the duality in the left hand side of \eqref{AsympLimitTemp} is equivalent to the square of the $H^s_0(\Omega)$ norm of $\vartheta(t)-g(t)$ by \eqref{Fdef}, we may conclude the strong convergence result \[\vartheta(t)-g(t)\to\vartheta_\infty-g_\infty\text{ in }H^s_0(\Omega)\text{ as }t\to\infty.\]

\end{proof}

\begin{remark}
Since $\eta(t)=b(\vartheta(t))+\chi(t)$, $\chi(t)\in H(\vartheta(t))$, and $\eta(t)\xrightharpoonup[t\to\infty]{} \eta_\infty$ in $L^2(\Omega)$-weak and $\vartheta(t)\to\vartheta_\infty$ in $L^2(\Omega)$, we have the existence of a $\chi_\infty\in H(\vartheta_\infty)$, such that $\chi(t)\xrightharpoonup[t\to\infty]{}\chi_\infty$ in $L^\infty(\Omega)$-weak$^*$.
\end{remark}

\begin{remark}
Similar asymptotic results as $t\to\infty$ for the case $s=1$ have been obtained in \cite{DamlamianKenmochi1986Asymptotic} considering other variants on the asymptotic behaviour of $f$ and $\tilde{g}$. 

Earlier asymptotic behaviour results for $s=1$ were obtained in Remarks 9 and 11 of \cite{TarziaDuvaut1} in the variational inequality form in a special case.
\end{remark}

\section{From Two Phases to One Phase}\label{Sect:OnePhase}
Let $\nu$ be a parameter such that \eqref{FracStefan} written with the Lipschitz graph $\gamma^\nu$ corresponds to the two-phase problem when $\nu>0$, and to the one-phase problem when $\nu=0$.
In this section, we obtain the solution to the one-phase problem, making use of the solution to the two-phase problem. 

Consider the one-phase problem given with data $f^\omicron $, $\tilde{g}^\omicron \geq0$ by \begin{equation}\tag{\ref{FracStefan}$_{1ph}$}-\int_{Q_T}\eta^\omicron \frac{\partial \xi}{\partial t}+\int_{\mathbb{R}^d}AD^s\vartheta^\omicron \cdot D^s\xi=\int_{Q_T}f^\omicron \xi+\int_\Omega\eta^\omicron _0\xi(0),\quad\forall \xi\in \Xi^s_T\end{equation} with initial condition $\eta^\omicron (x,0)=\eta_0^\omicron (x)$ with regularity as in Theorem \ref{VarStefanThm2HigherReg} and $\vartheta^\omicron =\gamma^\omicron (\eta^\omicron )$ such that $\vartheta^\omicron (0)-g^\omicron (0)\in H^s_0(\Omega)$. In this section, we use the lower subscript $\omicron$ to indicate the one-phase problem, and the upper superscript 0 to indicate the initial condition. We first show that there exists a solution to this problem, by obtaining the solution as the limit of a sequence of solutions to two-phase problems. The main idea is that we flatten the left leg of the monotone Lipschitz graph $\gamma$ to obtain $\gamma^\omicron $ which has range $[0,\infty[$. Then $\gamma^\omicron $ will still satisfy the same conditions \eqref{gammacond} at $r=+\infty$. Furthermore, we define the convex functional $\phi_t^\omicron$ by \[\phi_t^\omicron(W)=\begin{cases}\int_\Omega(j^\omicron (W)-g^\omicron (t)W)\,dx&\text{ for }W\in L^2(\Omega);\\+\infty &\text{ for }W\in H^{-s}(\Omega)\backslash L^2(\Omega)\end{cases}\] for the primitive $j^\omicron $ of $\gamma^\omicron $ chosen such that $j^\omicron $ vanishes at 0.

\begin{remark} Observe that the image of $\gamma^\omicron $ is $[0,\infty[$. Therefore, given any $\eta_0^\omicron \in L^2(\Omega)$, $\vartheta^\omicron (0)=\gamma^\omicron (\eta_0^\omicron )\geq0$. This also applies to $\eta^\omicron (t)\in L^2(\Omega)$ at general time $t\in[0,T]$, so we have $\vartheta^\omicron (t)=\gamma^\omicron (\eta^\omicron (t))\geq0$ for all $t$. As such, it is necessary that the Dirichlet boundary condition $g^\omicron $ is non-negative in $\Omega^c\times]0,T[$.
\end{remark}

\begin{theorem}\label{1phExistence}
Let $f^\omicron \in L^2(0,T;L^2(\Omega))$ and $\tilde{g}^\omicron \in W^{2,1}(0,T;L^2(\mathbb{R}^d))\cap L^\infty(0,T;H^s(\mathbb{R}^d))$, and define $g^\omicron $ as in \eqref{DirichletBdryCond} (and subsequently with the same regularity). Assume $\eta_0^\omicron \in L^2(\Omega)$ and, setting $0\leq\vartheta^\omicron (0)=\gamma^\omicron (\eta_0^\omicron )$, assume $\tilde{g}^\omicron \geq0$ in $\Omega^c\times]0,T[$ and $\vartheta^\omicron (0)-g^\omicron (0)\in H^s_0(\Omega)$. Then, there exist a unique generalised enthalpy solution $\eta^\omicron $ and a weak temperature solution $\vartheta^\omicron $ to the variational problem (\ref{FracStefan}$_{1ph}$) with \[\eta^\omicron \in\beta^\omicron (\vartheta^\omicron )\quad\text{ and }\quad\vartheta^\omicron =\gamma^\omicron (\eta^\omicron )\geq0,\] such that \begin{equation}\label{1phRegEta}\eta^\omicron \in L^\infty(0,T;L^2(\Omega))\cap H^1(0,T;H^{-s}(\Omega))\end{equation} and \begin{equation}\label{1phRegTheta} \vartheta^\omicron \in L^\infty(0,T;H^s(\mathbb{R}^d))\cap H^1(0,T;L^2(\Omega))\end{equation} with $\vartheta^\omicron =g^\omicron $ in $\Omega^c$.
\end{theorem}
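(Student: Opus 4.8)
The plan is to obtain the one-phase solution as the limit, as $\nu\searrow0$, of the two-phase solutions associated with a family of Lipschitz graphs $\gamma^\nu$ approximating $\gamma^\omicron=\gamma^0$. First I would fix a sequence $\gamma^\nu$ interpolating between a genuinely two-phase graph and $\gamma^\omicron$, obtained by replacing the flat left leg of $\gamma^\omicron$ by a leg of small slope $\nu>0$, so that each $\gamma^\nu$ is Lipschitz with constant bounded uniformly by $C_\gamma$, satisfies \eqref{gammacond} (in particular $\liminf_{|r|\to\infty}\gamma^\nu(r)/r>0$), and $\gamma^\nu\to\gamma^\omicron$ pointwise and monotonically. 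For each $\nu>0$, Theorems \ref{VarStefanThm1} and \ref{VarStefanThm2HigherReg} (applicable since $\tilde g^\omicron$ has the required regularity and $\vartheta^\omicron(0)-g^\omicron(0)\in H^s_0(\Omega)$) furnish a unique pair $(\eta^\nu,\vartheta^\nu)$ with $\vartheta^\nu=\gamma^\nu(\eta^\nu)$, enjoying the regularity \eqref{EnthalpyReg} and \eqref{TempReg} associated with the convex functionals $\phi_t^\nu$.

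The core of the argument is to make these estimates uniform in $\nu$. The bounds \eqref{Estimate1}--\eqref{Estimate2} in $C([0,T];H^{-s}(\Omega))\cap H^1(0,T;H^{-s}(\Omega))$ are immediately uniform, since the constants $C_1,C_2$ depend only on the data $f^\omicron,\eta_0^\omicron$ and on $\norm{g^\omicron}_{BV(0,T;L^2(\Omega))}$ and not on the graph; moreover $\phi_0^\nu(\eta_0^\omicron)=\int_\Omega(j^\nu(\eta_0^\omicron)-g^\omicron(0)\eta_0^\omicron)$ stays bounded because $j^\nu$ has at most quadratic growth with coefficient controlled by $C_\gamma$, for the fixed datum $\eta_0^\omicron\in L^2(\Omega)$. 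The main obstacle I expect is the \emph{uniform} $L^\infty(0,T;L^2(\Omega))$ bound \eqref{Estimate3} on $\eta^\nu$ and, through Proposition \ref{TimeDefEstProp} and Remarks \ref{GalerkinEstRemark}--\ref{TempSBoundedRemark}, the uniform temperature regularity \eqref{TempReg}: these rely, via \eqref{SubdiffuEst}--\eqref{LinftyEst}, on the coercivity constant of $j^\nu$, which degenerates as $\nu\searrow0$ because $j^\nu$ becomes flat on the left leg. To circumvent this I would use the energy identity obtained by testing \eqref{FracStefanEta} with $\vartheta^\nu-g^\omicron\in H^s_0(\Omega)$, namely $\tfrac{d}{dt}\int_\Omega j^\nu(\eta^\nu)+[\vartheta^\nu-g^\omicron,\vartheta^\nu-g^\omicron]_A=\langle\tfrac{d\eta^\nu}{dt},g^\omicron\rangle+\langle f^\omicron,\vartheta^\nu-g^\omicron\rangle$, which controls $\int_\Omega j^\nu(\eta^\nu(t))$ and hence the part of $\eta^\nu$ where $\gamma^\nu$ is nondegenerate, and combine it with the uniform temperature regularity and the sign structure: on the flat leg $\vartheta^\nu=\nu(\eta^\nu-\lambda)$, whose negative part is controlled in $L^2$ since $\eta_0^\omicron\in L^2(\Omega)$ and $g^\omicron\geq0$, so that the flat-leg contribution to $\eta^\nu$ remains bounded uniformly. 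This is precisely the delicate point, and I would expect the careful treatment of the negative part of $\eta^\nu$ to be where the real work lies.

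With these uniform bounds I would extract, up to a subsequence, limits $\eta^\nu\rightharpoonup\eta^\omicron$ in $H^1(0,T;H^{-s}(\Omega))$-weak and $L^\infty(0,T;L^2(\Omega))$-weak$^*$, and $\vartheta^\nu\rightharpoonup\vartheta^\omicron$ in $L^\infty(0,T;H^s(\mathbb{R}^d))$-weak$^*$ and $H^1(0,T;L^2(\Omega))$-weak. By the Aubin--Lions--Simon compactness lemma (Corollary 4 of \cite{SimonCompactness}) applied to $\vartheta^\nu-g^\omicron$, together with the fixed $g^\omicron$, one gets $\vartheta^\nu\to\vartheta^\omicron$ strongly in $C([0,T];L^2(\Omega))$, in particular in $L^2(Q_T)$. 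Passing to the limit in the variational identity (\ref{FracStefan}$_{1ph}$) written for $\gamma^\nu$ is then routine: the time and source terms pass by the weak convergence of $\eta^\nu$ and the fixed $\eta_0^\omicron$, while the diffusion term passes by the weak$^*$ convergence of $D^s\vartheta^\nu$ tested against the fixed $D^s\xi$. To identify the nonlinear relation I would use that $\eta^\nu\in\beta^\nu(\vartheta^\nu)$ with $\beta^\nu\to\beta^\omicron$ in the sense of graphs, $\vartheta^\nu\to\vartheta^\omicron$ strongly in $L^2(Q_T)$ and $\eta^\nu\rightharpoonup\eta^\omicron$ weakly in $L^2(Q_T)$; by the maximal monotonicity of $\beta^\omicron$ (Proposition 2.5 of \cite{BrezisBook1973}) this yields $\eta^\omicron\in\beta^\omicron(\vartheta^\omicron)$ and $\eta_0^\omicron\in\beta^\omicron(\vartheta^\omicron(0))$ a.e., hence $\vartheta^\omicron=\gamma^\omicron(\eta^\omicron)\geq0$ since the range of $\gamma^\omicron$ is $[0,\infty[$, with $\vartheta^\omicron=g^\omicron$ in $\Omega^c$. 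The regularities \eqref{1phRegEta}--\eqref{1phRegTheta} then follow from the uniform bounds and weak lower semicontinuity.

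Finally, uniqueness does not require the coercivity at $-\infty$: since $\gamma^\omicron$ is Lipschitz with constant $C_\gamma$, the continuous dependence argument of Proposition \ref{ContDep} applies verbatim, the only structural ingredient being $(\mathcal{L}_A^s(\vartheta^\omicron-\hat\vartheta^\omicron),\eta^\omicron-\hat\eta^\omicron)\geq\tfrac{1}{C_\gamma}\norm{\vartheta^\omicron-\hat\vartheta^\omicron}_{L^2(\Omega)}^2$. Taking the same data and invoking \eqref{ContDepResultEnthalpy} gives $\eta^\omicron=\hat\eta^\omicron$, whence $\eta^\omicron$ (and thus $\vartheta^\omicron=\gamma^\omicron(\eta^\omicron)$) is unique; this also shows that the whole family converges, not merely a subsequence.
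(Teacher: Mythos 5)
Your proposal follows essentially the same route as the paper's proof: regularise $\gamma^\omicron$ by a strictly increasing Lipschitz family $\gamma^\nu$ with uniformly bounded Lipschitz constants, solve the two-phase problem for each $\nu>0$ via Theorems \ref{VarStefanThm1} and \ref{VarStefanThm2HigherReg}, derive $\nu$-uniform estimates, extract weak limits, pass to the limit in the variational identity, and identify $\eta^\omicron\in\beta^\omicron(\vartheta^\omicron)$ by maximal monotonicity, with $\vartheta^\omicron\geq0$ coming from the range of $\gamma^\omicron$. (The paper takes $\gamma^\nu=\gamma^\omicron+\nu r$, which keeps the normalisation $\gamma^\nu(0)=0$ of \eqref{gammacond} and makes $\beta^\nu=\frac{1}{\nu}\left(Id-(Id+\nu\beta^\omicron)^{-1}\right)$ a Yosida-type regularisation; your tilt of the left leg alone, pivoted at $(\lambda,0)$, gives $\gamma^\nu(0)=-\nu\lambda$, a harmless but avoidable deviation.) Your explicit uniqueness step via Proposition \ref{ContDep} is correct --- only the Lipschitz constant of $\gamma^\omicron$ is needed there, not coercivity at $-\infty$ --- and it upgrades subsequential convergence to convergence of the whole family, something the paper leaves implicit.

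The one substantive divergence is how the uniform $L^\infty(0,T;L^2(\Omega))$ bound on $\eta^\nu$ is obtained, and there your sketch has a genuine gap. You correctly observe that \eqref{SubdiffuEst}--\eqref{LinftyEst} degenerate as $\nu\searrow0$, since the coercivity constant of $j^\nu$ on the flat leg is of order $\nu$. But the repair you propose does not close this. On the flat leg one has $\vartheta^\nu=\nu(\eta^\nu-\lambda)$, hence $(\eta^\nu)^-\leq(\vartheta^\nu)^-/\nu$, so an $L^2$ bound on $(\vartheta^\nu)^-$ that is merely uniform in $\nu$ yields only $\norm{(\eta^\nu)^-}_{L^2(\Omega)}=O(1/\nu)$; to conclude you would need $\norm{(\vartheta^\nu)^-}_{L^2(\Omega)}=O(\nu)$, which your energy identity (testing with $\vartheta^\nu-g^\omicron$) does not produce, and the alternative of testing with $-(\vartheta^\nu)^-$ would require a Kato-type inequality for $\mathcal{L}_A^s$ with general $A(x)$ that is not available in the paper. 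The paper's route avoids any sign decomposition: the constants in Proposition \ref{AttouchDamlamianThm} are independent of the convex functional, depending only on the data norms and on $\varphi_0(U_0)$; since $\phi^\nu_{0,n}(\eta^\nu_0)\leq\int_\Omega\left(j^\omicron(\eta^\nu_0)+|\eta^\nu_0|^2+g^\nu(0)\eta^\nu_0\right)+I_{F_n^*}$ is uniformly bounded for $\nu<1$, the estimates \eqref{Estimate1}--\eqref{Estimate2} and \eqref{AttouchDalamianThmEst3}, and then the Galerkin inequality \eqref{A1CauchyProbSeqVar5} used with the uniform Lipschitz constant $C_\gamma=C_{\gamma^\omicron}+1$ in \eqref{LipschitzTimeDef}, hold with constants independent of $\nu$; this is what the paper invokes to bound $\eta^\nu$ in $L^\infty(0,T;L^2(\Omega))\cap H^1(0,T;H^{-s}(\Omega))$ and $\vartheta^\nu-g^\nu$ in $L^\infty(0,T;H^s_0(\Omega))\cap H^1(0,T;L^2(\Omega))$. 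Within your own scheme, what your ingredients actually yield uniformly is only the positive part: \eqref{AttouchDalamianThmEst3}, together with $j^\nu\geq j^\omicron$ (coercive on $[0,\infty[$) and $g^\nu\geq0$, bounds $(\eta^\nu)^+$ in $L^\infty(0,T;L^2(\Omega))$; the negative part is exactly where your argument, as written, does not go through, and you should either adopt the paper's abstract-constant argument or supply a genuine $O(\nu)$ estimate for $(\vartheta^\nu)^-$.
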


\begin{proof}
We construct $\eta^\omicron $ and $\vartheta^\omicron $ as the limit of an approximating sequence of $\eta^\nu$ and $\vartheta^\nu$. (See also the proof of Theorem A.1 in \cite{DelTesoVazquez1Phase}.)

Indeed, since $\gamma^\omicron $ is non-negative, \[\lim_{|r|\to+\infty}\frac{\gamma^\omicron (r)}{r}\geq0.\] Then, consider the strictly increasing approximation \begin{equation}\gamma^\nu(r)=\gamma^\omicron (r)+\nu r\end{equation} for $\nu>0$. Assuming $\gamma^\omicron $ is Lipschitz continuous, so is $\gamma^\nu$. Also, $\gamma^\nu$ clearly converges to $\gamma^\omicron $ uniformly on compact sets as $\nu$ tends to zero. Furthermore,  \[\liminf_{|r|\to+\infty}\frac{\gamma^\nu(r)}{r}\geq\nu+\liminf_{|r|\to+\infty}\frac{\gamma^\omicron (r)}{r}>0,\] so \eqref{gammacond} is satisfied. The corresponding maximal monotone graph $\beta^\nu$ is then given by \begin{equation}\beta^\nu(r)=\frac{1}{\nu}(r-(Id+\nu\beta^\omicron )^{-1}(r)),\end{equation} which is Lipschitz continuous with constant $\frac{1}{\nu}$. Therefore, from Theorem \ref{VarStefanThm1} and Theorem \ref{VarStefanThm2HigherReg}, we obtain the unique generalised enthalpy and weak temperature solutions $\eta^\nu$ and $\vartheta^\nu$ of the approximate regularized problem with approximating compatible functions $f^\nu$, $g^\nu$ and $\eta_0^\nu$ in the same spaces as the ones of the data \begin{equation}\tag{\ref{FracStefan}$^\nu$}-\int_{Q_T}\eta^\nu \frac{\partial\xi}{\partial t}+\int_{\mathbb{R}^d\times[0,T]}AD^s\vartheta^\nu\cdot D^s\xi=\int_{Q_T} f^\nu\xi+\int_\Omega\eta^\nu_0\xi(0),\quad\forall \xi\in \Xi^s_T,\end{equation} such that $\eta^\nu=\beta^\nu(\vartheta^\nu)$ are uniformly bounded in $H^1(0,T;H^{-s}(\Omega)\cap L^\infty(0,T;L^2(\Omega))$ for $\nu<1$, since the estimates \eqref{EstEq1}--\eqref{EstEq2} are independent of $\nu$ with \begin{align*}\phi_{t,n}^\nu(\eta^\nu_0)&=\int_\Omega(j^\nu(\eta^\nu_0)+g^\nu(0)\eta^\nu_0)+I_{F_n^*}\\&=\int_\Omega(j^\omicron (\eta^\nu_0)+\nu|\eta^\nu_0|^2+g^\nu(0)\eta^\nu_0)+I_{F_n^*}\\&\leq\int_\Omega(j^\omicron (\eta^\nu_0)+|\eta^\nu_0|^2+g^\nu(0)\eta^\nu_0)+I_{F_n^*}\end{align*} for uniformly bounded $\eta^\nu_0,g^\nu(0)\in L^2(\Omega)$. We recall that $(F_n)_{n\in\mathbb{N}}$ is an increasing set of finite dimensional subspaces of $H^s_0(\Omega)$, $F_n^*=\mathcal{L}(F_n)\subset H^{-s}(\Omega)$, and $I_{F_n^*}$ is the indicator function of $F_n^*$, i.e. $I_{F_n^*}=0$ in $F_n^*$, $I_{F_n^*}=+\infty$ elsewhere.

Henceforth, taking $C_\gamma=C_{\gamma^\omicron }+1$ in \eqref{LipschitzTimeDef} and making use of \eqref{A1CauchyProbSeqVar5} at the limit $n\to\infty$, we obtain that $\frac{\partial  \vartheta^\nu}{\partial t}$ is bounded in $L^2(0,T;L^2(\Omega))$ and $\vartheta^\nu-g^\nu$ is bounded in $L^\infty(0,T;H^s_0(\Omega))$ independently of $\nu$.
Passing to the limit as $\nu$ tends to zero, since $\eta^\nu$ is bounded in $H^1(0,T;H^{-s}(\Omega))$ as a solution to (\ref{FracStefan}$_{\nu}$), we have $(\eta^{\nu_n})_n$ converging in $H^1(0,T;H^{-s}(\Omega))$-weak and in $L^\infty(0,T;L^2(\Omega))$ in the weak$^*$ topology, to some $\eta^\omicron $. Similarly, $(\vartheta^{\nu_n})_n=\left(\gamma^{\nu_n}(\eta^{\nu_n})\right)_n$ converges weakly in $H^1(0,T;L^2(\Omega))\cap L^\infty(0,T;H^s(\mathbb{R}^d))$, and by compactness also in $C([0,T];L^2(\Omega))$, to some $\vartheta^\omicron $ such that $\vartheta^\omicron (t)-g^\omicron (t)\in H^s_0(\Omega)$ a.e. $t$. Passing to the limit, $\vartheta^\omicron $ satisfies (\ref{FracStefan}$_{1ph}$) with the required regularity \eqref{1phRegTheta}. Also, by the maximal monotonicity of $\beta^\omicron $ and the Mosco convergence of $\beta^\nu$ to $\beta^\omicron $, we have $\eta^\omicron \in\beta^\omicron (\vartheta^\omicron )$ and $\eta^\omicron _0\in\beta^\omicron (\vartheta^\omicron (0))$ satisfying (\ref{FracStefan}$_{1ph}$) and \eqref{1phRegEta}. Subsequently, $\vartheta^\omicron =\gamma^\omicron (\eta^\omicron )$ a.e. in $\Omega\times]0,T[$ and $\vartheta^\omicron (0)=\lim_{\nu\to0}\gamma^\nu(\eta_0^\nu)=\gamma^\omicron (\eta_0^\omicron )$ by the convergence of $\eta_0^\nu$ to $\eta_0^\omicron $ in $L^2(\Omega)$. Since the range of $\gamma^\omicron $ is $[0,\infty[$, $\vartheta\geq0$ and we obtain the solution of the one-phase problem.
\end{proof}

Having obtained a unique solution to the limiting one-phase problem, we now show that the solutions of the two-phase problem given by \begin{equation}-\eta^\nu \int_{Q_T} \frac{\partial\xi}{\partial t}+\int_{\mathbb{R}^d\times[0,T]}AD^s(\vartheta^\nu-g^\nu)\cdot D^s\xi=\int_{Q_T} f^\nu\xi+\int_\Omega\eta^\nu_0\xi(0)\quad\forall \xi\in \Xi^s_T\tag{\ref{FracStefan}$_{2ph}$}\end{equation} with $\vartheta^\nu=\gamma^\nu(\eta^\nu)$ in fact converges to the one-phase problem (\ref{FracStefan}$_{1ph}$). For the classical case of $s=1$, see also \cite{BarbaraStoth19972ph1ph}, as well as the proof of Theorem 6.1 on pages 44-45 of \cite{damlamian1976thesis}).

\sloppy \begin{theorem}\label{ConvergenceThm2ph} Assume that for each $\nu\geq0$, $f^\nu\in L^2(0,T;L^2(\Omega))$, $\tilde{g}^\nu\in W^{2,1}(0,T;L^2(\Omega))\cap L^\infty(0,T;H^s(\mathbb{R}^d))$ bounded independently of $\nu$, and $\eta_0^\nu\in \overline{D(\phi_t^\nu)}$. Writing $\vartheta^\nu=\gamma^\nu(\eta^\nu)$ for the Lipschitz graph $\gamma^\nu$ with a uniform Lipschitz constant $C_\gamma$ for all $\nu\geq0$, assume that $\eta_0^\nu\in L^2(\Omega)$ and, setting $0\leq\vartheta^\nu (0)=\gamma^\nu (\eta_0^\nu )$, assume $\tilde{g}^\nu \geq0$ in $\Omega^c\times]0,T[$ and $\vartheta^\nu (0)-g^\nu (0)\in H^s_0(\Omega)$ is bounded uniformly in $\nu$ for $\nu\geq0$. Let $(\eta^\nu,\vartheta^\nu)$ be the unique solution of the fractional two-phase Stefan-type problem (\ref{FracStefan}$_{2ph}$), while $(\eta^\omicron ,\vartheta^\omicron )$ is the unique solution of the fractional one-phase Stefan-type problem (\ref{FracStefan}$_{1ph}$) with $0\leq\vartheta^\omicron =\gamma^\omicron (\eta^\omicron )$. Suppose that $\eta^\nu_0\rightharpoonup\eta^\omicron _0$ in $L^2(\Omega)$, $f^\nu\rightharpoonup f^\omicron $ in $L^2(0,T;L^2(\Omega))$, $g^\nu\rightharpoonup g^\omicron $ in $W^{2,1}(0,T;L^2(\mathbb{R}^d))$-weak and in $L^\infty(0,T;H^s(\mathbb{R}^d))$-weak$^*$, and $\gamma^\nu$ converges to $\gamma^\omicron $ uniformly on compact sets as $\nu$ tends to zero. Then, \[\eta^\nu\rightharpoonup \eta^\omicron \text{ in } H^1(0,T;H^{-s}(\Omega))\text{-weak}\text{ and in }L^\infty(0,T;L^2(\Omega))\text{-weak}^*\text{ as }\nu\searrow0\] and \[\vartheta^\nu\rightharpoonup\vartheta^\omicron \text{ in }H^1(0,T;L^2(\Omega))\text{-weak,}\text{ in }L^\infty(0,T;H^s(\mathbb{R}^d))\text{-weak}^*\text{ and in }C([0,T];L^2(\Omega))\text{ as }\nu\searrow0.\]
\end{theorem}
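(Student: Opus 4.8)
The plan is to follow the pattern of the proofs of Theorems \ref{1phExistence} and \ref{ConvergenceThmSStefan}, now treating $\nu$ as the varying parameter (in place of $s$) and exploiting that every a priori estimate depends on $\gamma^\nu$ only through the common Lipschitz constant $C_\gamma$. First I would collect the bounds that are uniform in $\nu$. Since $f^\nu$, $g^\nu$ and $\eta_0^\nu$ are bounded in their respective spaces, $C_\gamma$ is common to all $\gamma^\nu$, and the initial energies $\phi_0^\nu(\eta_0^\nu)$ are uniformly bounded (because $j^\nu(r)\le\tfrac{C_\gamma}{2}r^2$ together with the uniform bound on $\eta_0^\nu$ and $g^\nu(0)$), the estimates \eqref{Estimate1}--\eqref{Estimate3} of Theorem \ref{VarStefanThm1} give $\eta^\nu$ bounded in $L^\infty(0,T;L^2(\Omega))\cap H^1(0,T;H^{-s}(\Omega))$, while the Galerkin estimates \eqref{TimeDefEstPropBounds} of Proposition \ref{TimeDefEstProp}, used exactly as in the proof of Theorem \ref{1phExistence} with $C_\gamma$ in \eqref{LipschitzTimeDef} and \eqref{A1CauchyProbSeqVar5} and the uniform bound on $\vartheta^\nu(0)-g^\nu(0)$, give $\vartheta^\nu-g^\nu$ bounded in $L^\infty(0,T;H^s_0(\Omega))$ and $\partial_t\vartheta^\nu$ bounded in $L^2(0,T;L^2(\Omega))$, all independently of $\nu$.

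By weak and weak$^*$ compactness I would extract a subsequence along which $\eta^\nu\rightharpoonup\tilde{\eta}$ in $H^1(0,T;H^{-s}(\Omega))$-weak and in $L^\infty(0,T;L^2(\Omega))$-weak$^*$, and $\vartheta^\nu\rightharpoonup\tilde{\vartheta}$ in $H^1(0,T;L^2(\Omega))$-weak and in $L^\infty(0,T;H^s(\mathbb{R}^d))$-weak$^*$. Since $\vartheta^\nu-g^\nu$ is bounded in $L^\infty(0,T;H^s_0(\Omega))\cap H^1(0,T;L^2(\Omega))$ and the embedding $H^s_0(\Omega)\hookrightarrow L^2(\Omega)$ is compact, the Aubin--Lions--Simon lemma (Corollary 4 of \cite{SimonCompactness}) yields $\vartheta^\nu-g^\nu\to\tilde{\vartheta}-g^\omicron$ strongly in $C([0,T];L^2(\Omega))$, hence $\vartheta^\nu\to\tilde{\vartheta}$ strongly in $C([0,T];L^2(\Omega))$ using the convergence of $g^\nu$ to $g^\omicron$. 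Passing to the limit in (\ref{FracStefan}$_{2ph}$) for each fixed $\xi\in\Xi^s_T$ is then routine: the term $-\int_{Q_T}\eta^\nu\partial_t\xi$ converges by the weak$^*$ convergence of $\eta^\nu$, the bilinear term $\int AD^s\vartheta^\nu\cdot D^s\xi$ converges because $D^s\vartheta^\nu\rightharpoonup D^s\tilde{\vartheta}$ weakly with $D^s\xi$ fixed, and the right-hand side converges by the weak convergence of $f^\nu$ and $\eta_0^\nu$; thus $(\tilde{\eta},\tilde{\vartheta})$ satisfies the variational identity with the $s$-fixed operator.

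The delicate step, which I expect to be the main obstacle, is to identify the constitutive relation in the limit, namely $\tilde{\eta}\in\beta^\omicron(\tilde{\vartheta})$ (equivalently $\tilde{\vartheta}=\gamma^\omicron(\tilde{\eta})$). Because only weak convergence of $\eta^\nu$ is available, one cannot simply pass $\gamma^\nu(\eta^\nu)\to\gamma^\omicron(\tilde{\eta})$ pointwise. Instead I would combine the strong convergence $\vartheta^\nu\to\tilde{\vartheta}$ in $L^2(Q_T)$ with the weak convergence $\eta^\nu\rightharpoonup\tilde{\eta}$ in $L^2(Q_T)$ and the relation $\eta^\nu\in\beta^\nu(\vartheta^\nu)$ a.e.; since $\vartheta^\nu\to\tilde{\vartheta}$ strongly and $\eta^\nu\rightharpoonup\tilde{\eta}$ weakly force $(\eta^\nu,\vartheta^\nu)_{L^2(Q_T)}\to(\tilde{\eta},\tilde{\vartheta})_{L^2(Q_T)}$, the maximal monotonicity of $\beta^\omicron$ together with the graph (Mosco) convergence $\beta^\nu\to\beta^\omicron$ inherited from the uniform-on-compacts convergence $\gamma^\nu\to\gamma^\omicron$ yields $\tilde{\eta}\in\beta^\omicron(\tilde{\vartheta})$, exactly as invoked in Theorem \ref{1phExistence} and Theorem \ref{ConvergenceThmSStefan} (cf. Proposition 2.5 of \cite{BrezisBook1973}). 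The same argument at $t=0$, using $\eta_0^\nu\rightharpoonup\eta_0^\omicron$ and $\vartheta^\nu(0)\to\tilde{\vartheta}(0)$, gives $\eta_0^\omicron\in\beta^\omicron(\tilde{\vartheta}(0))$; since the range of $\gamma^\omicron$ is $[0,\infty[$, the limit satisfies $\tilde{\vartheta}\geq0$, so $(\tilde{\eta},\tilde{\vartheta})$ solves the one-phase problem (\ref{FracStefan}$_{1ph}$).

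Finally, I would appeal to uniqueness to upgrade subsequential convergence to convergence of the whole family. The continuous dependence estimate \eqref{ContDepResultEnthalpy} of Proposition \ref{ContDep} applies verbatim to $\gamma^\omicron$, since its derivation uses only that $\gamma^\omicron$ is monotone and $C_\gamma$-Lipschitz; consequently the one-phase enthalpy solution $\eta^\omicron$, and hence $\vartheta^\omicron=\gamma^\omicron(\eta^\omicron)$, is unique. Therefore $(\tilde{\eta},\tilde{\vartheta})=(\eta^\omicron,\vartheta^\omicron)$, and because every subsequence of $(\eta^\nu,\vartheta^\nu)$ admits a further subsequence converging to this same limit, the full family converges as $\nu\searrow0$ in the topologies asserted in the statement, which completes the proof.
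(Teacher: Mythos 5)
Your proposal is correct and follows essentially the same route as the paper: uniform a priori estimates depending on $\gamma^\nu$ only through the common Lipschitz constant, weak/weak$^*$ compactness plus Aubin--Lions--Simon for the strong $C([0,T];L^2(\Omega))$ convergence, passage to the limit in the variational identity, and identification of $\tilde{\eta}\in\beta^\omicron(\tilde{\vartheta})$ via maximal monotonicity together with the Mosco convergence of $\beta^\nu$ to $\beta^\omicron$ (the paper's proof simply defers these steps to its Theorem \ref{1phExistence}). Your closing appeal to uniqueness to upgrade subsequential convergence to convergence of the whole family is a point the paper leaves implicit, and is a welcome completion rather than a different method.
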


\begin{proof}
Indeed, as in the previous theorem, since $\eta^\nu\in\beta^\nu(\vartheta^\nu)$ is a solution to (\ref{FracStefan}$_{2ph}$), it is bounded in $H^1(0,T;H^{-s}(\Omega))$. Passing to a subsequence, we have $(\eta^{\nu_n})_n$ converging in $H^1(0,T;H^{-s}(\Omega))$-weak and in $L^\infty(0,T;L^2(\Omega))$ in the weak$^*$ topology, to some $\eta^\omicron $. 

Furthermore, \[\mathcal{L}_A^s(\gamma^\nu(\eta^\nu)-g^\nu)=\partial\phi^\nu_t(\eta^\nu)=f^\nu-\frac{\partial \eta^\nu}{\partial t}\rightharpoonup f^\omicron -\frac{\partial \eta^\omicron }{\partial t}=\partial\phi^\omicron_t(\eta^\omicron )=\mathcal{L}_A^s(\gamma^\omicron (\eta^\omicron )-g^\omicron )\text{ weakly in }L^2(0,T;H^{-s}(\Omega).\] Therefore, by applying $(\mathcal{L}_A^s)^{-1}$, $w^\nu=\gamma^\nu(\eta^\nu)-g^\nu$ converges weakly to $w^\omicron =\gamma^\omicron (\eta^\omicron )-g^\omicron $ in $L^2(0,T;H^s_0(\Omega))$. But $\eta^\nu$ is in $L^\infty(0,T;L^2(\Omega))$ for each $\nu$ by Theorem \ref{VarStefanThm1} since $\eta^\nu$ is the generalised enthalpy solution to the Stefan-type problem (\ref{FracStefan}$_{2ph}$), bounded independent of $\nu>0$ for $\nu$ small enough. Therefore, by the assumptions, we can again obtain a priori estimates on $\vartheta_\nu=\gamma^\nu(\eta^\nu)$ in $L^\infty(0,T;H^s(\mathbb{R}^d))\cap H^1(0,T;L^2(\Omega))$, and the conclusion follows as in the proof of the previous theorem.
\end{proof}

\begin{remark}
Similarly to the convergence of the two-phase problem, it is possible to extend the results of Sections 4 and 5 to the one-phase problem. 
\end{remark}

\appendix
\appendix

\section{Appendix - The Fractional Dirichlet Problem}\label{Sect:DirBdryCondAp}

The function $g=g(t)$ is constructed for every fixed $t\in J$, for the interval $J=[0,T]$ for all $T<\infty$, (using Theorem 1.13 of \cite{SS1}) by solving \begin{equation}\label{A2Prob1}\int_{\mathbb{R}^d}AD^sg(t)\cdot D^sv=0\quad\forall v\in H^s_0(\Omega)\end{equation} 
with the Dirichlet boundary condition given by \[g(t)=\tilde{g}(t)\text{ in }\Omega^c,\]
with $\tilde{g}(t)$ defined on $H^s(\mathbb{R}^d)$. When $\tilde{g}\in BV(0,T;H^s(\mathbb{R}^d))$ or $H^k(0,T;H^s(\mathbb{R}^d))$ for $k=1,2$, 
by solving this Dirichlet problem, $g$ will have the same time regularity as $\tilde{g}$.

Indeed, consider $u=g-\tilde{g}$. Then $u$ satisfies $u(t)=0\text{ in }\Omega^c$ and \begin{equation}\label{A2Prob2}\int_{\mathbb{R}^d}AD^su(t)\cdot D^sv=-\int_{\mathbb{R}^d}AD^s\tilde{g}(t)\cdot D^sv=:\langle\mathcal{L}\tilde{g}(t),v\rangle\quad\forall v\in H^s_0(\Omega)\end{equation} Since $\mathcal{L}:H^s(\mathbb{R}^d)\to H^{-s}(\Omega)$ with $\tilde{g}(t)\in H^s(\mathbb{R}^d)$, $\mathcal{L}\tilde{g}(t)$ is a linear functional in $H^{-s}(\Omega)$. By the coercivity and boundedness of $\mathcal{L}$, there exists a unique solution $u(t)\in H^s_0(\Omega)$ satisfying \eqref{A2Prob2} for almost every $t\in J$ by the Lax-Milgram theorem. By the uniqueness of $u(t)$, there exists a unique $g(t):=u(t)+\tilde{g}(t)\in H^s(\mathbb{R}^d)$ satisfying \eqref{A2Prob1} for almost every $t\in J$. It is clear that $g\in L^2(0,T;H^s(\mathbb{R}^d))$ if $\tilde{g}\in L^2(0,T;H^s(\mathbb{R}^d))$.

Furthermore, by linearity of $\mathcal{L}$, considering two time slices $\{t\}\times\Omega$ and $\{\tau\}\times\Omega$, we have, taking the test function to be $u(t)-u(\tau)$, \begin{align}\label{A2timeEst1}\begin{split} a_*\norm{u(t)-u(\tau)}_{H^s_0(\Omega)}^2&\leq\int_{\mathbb{R}^d}AD^su(t)\cdot D^s(u(t)-u(\tau))-\int_{\mathbb{R}^d}AD^su(\tau)\cdot D^s(u(t)-u(\tau))\\&=-\int_{\mathbb{R}^d}AD^s\tilde{g}(t)\cdot D^s(u(t)-u(\tau))+\int_{\mathbb{R}^d}AD^s\tilde{g}(\tau)\cdot D^s(u(t)-u(\tau))\\&\leq a^*\norm{\tilde{g}(t)-\tilde{g}(\tau)}_{H^s(\mathbb{R}^d)}\norm{u(t)-u(\tau)}_{H^s_0(\Omega)},\end{split}\end{align} so taking the sum of all time steps in $[t_i,t_{i-1}]\subset[0,T]$, $u\in BV(0,T;H^s_0(\Omega))$ if $\tilde{g}\in BV(0,T;H^s(\mathbb{R}^d))$, and consequently $g=u+\tilde{g}\in BV(0,T;H^s(\mathbb{R}^d))$.

Also, from \eqref{A2timeEst1}, we have the continuity of $u(t)$ in time for $t\in J$. Therefore, $u\in C(J;H^s_0(\Omega))$ if $\tilde{g}(t)$ is continuous for $t\in J$. Furthermore, we consider the problem \begin{equation}\label{A2Prob3}\int_{\mathbb{R}^d}AD^sw(t)\cdot D^sv=-\int_{\mathbb{R}^d}AD^s\frac{\partial \tilde{g} }{\partial t}(t)\cdot D^sv=\left\langle\mathcal{L}\frac{\partial \tilde{g} }{\partial t}(t),v\right\rangle\quad\forall v\in H^s_0(\Omega)\end{equation} when $\frac{\partial \tilde{g} }{\partial t}\in H^s(\mathbb{R}^d)$, and we can once again apply the argument above to obtain a unique solution $w\in H^s_0(\Omega)$ for almost every $t\in J$. It remains to show that \[w(t)=\frac{\partial u}{\partial t}(t)\quad\text{ a.e. }t\text{ in }H^s_0(\Omega).\] But, as in \eqref{A2timeEst1}, we have, using \eqref{A2Prob2} and \eqref{A2Prob3} and taking the test function to be $\frac{u(t)-u(t+h)}{h}-w(t)$, \begin{align}\label{A2timeEst2}\begin{split} &\,a_*\norm{\frac{u(t)-u(t+h)}{h}-w(t)}_{H^s_0(\Omega)}^2\\\leq&\,\int_{\mathbb{R}^d}AD^s\frac{u(t)-u(t+h)}{h}\cdot D^s\left(\frac{u(t)-u(t+h)}{h}-w(t)\right)-\int_{\mathbb{R}^d}AD^sw(t)\cdot D^s\left(\frac{u(t)-u(t+h)}{h}-w(t)\right)\\=&\,-\int_{\mathbb{R}^d}AD^s\frac{\tilde{g}(t)-\tilde{g}(t+h)}{h}\cdot D^s\left(\frac{u(t)-u(t+h)}{h}-w(t)\right)+\int_{\mathbb{R}^d}AD^s\frac{\partial \tilde{g} }{\partial t}(t)\cdot D^s\left(\frac{u(t)-u(t+h)}{h}-w(t)\right)\\\leq&\, a^*\norm{\frac{\tilde{g}(t)-\tilde{g}(t+h)}{h}-\frac{\partial \tilde{g} }{\partial t}(t)}_{H^s(\mathbb{R}^d)}\norm{\frac{u(t)-u(t+h)}{h}-w(t)}_{H^s_0(\Omega)}.\end{split}\end{align} But recall that by definition (see, for instance, Chapter 23.5 of \cite{Zeidlerbook2A}), \[\frac{\tilde{g}(t)-\tilde{g}(t+h)}{h}\to\frac{\partial \tilde{g} }{\partial t}(t)\quad\text{ in }H^s(\mathbb{R}^d)\text{ as }h\to0.\] Therefore, for any $\epsilon>0$, take a small enough $h>0$ such that $\norm{\frac{\tilde{g}(t)-\tilde{g}(t+h)}{h}-\frac{\partial \tilde{g} }{\partial t}(t)}_{H^s(\mathbb{R}^d)}<\epsilon$, then $\norm{\frac{u(t)-u(t+h)}{h}-w(t)}_{H^s_0(\Omega)}<\frac{a^*\epsilon}{a_*}$. Since $\epsilon$ is arbitrary, \[w(t)=\lim_{h\to0}\frac{u(t)-u(t+h)}{h}\quad\text{ a.e. }t\text{ in }H^s_0(\Omega),\] and the limit of the difference quotient is, by definition, $\frac{\partial u}{\partial t}$.
Therefore, $\frac{\partial g}{\partial t}=w(t)+\frac{\partial \tilde{g} }{\partial t}(t)$, and we have that $g$ has the same regularity as $\tilde{g}$ in $H^1(0,T;H^s(\mathbb{R}^d))$.  Repeating this argument again by taking a second time derivative, we have the same result for $g$ if $\tilde{g}\in H^2(0,T;H^s(\mathbb{R}^d))$.

Analogously, for $\tilde{g}\in W^{2,1}(0,T;L^2(\mathbb{R}^d))\cap L^2(0,T;H^s(\mathbb{R}^d))$ for $T\in]0,\infty]$, $g$ is first constructed from $\tilde{g}\in H^2(0,T;H^s(\mathbb{R}^d))$, and then extended by density to obtain also $g\in W^{2,1}(0,T;L^2(\mathbb{R}^d))\cap L^2(0,T;H^s(\mathbb{R}^d))$.

\section{Appendix - The Variational Inequality Formulations}\label{Sect:VarIneqEquiv}
We observe that the formulation given in \eqref{FracStefan} can be formally transformed into a variational inequality formulation with fractional derivatives (see for example \cite{RodriguesVarMethods1994b} or Chapter VII of \cite{damlamian1976thesis}). 
Indeed, consider an element $w\in H^s_0(\Omega)$ independent of $t$ and taking in \eqref{FracStefanEta} the test function $\xi(x,\tau)=w(x)$ for $\tau\in]t-\epsilon,t+\epsilon[$ and $\xi(x,\tau)=0$, dividing by $2\epsilon$ and letting $\epsilon\to0$, denoting now by $\langle\cdot,\cdot\rangle$ the duality between $H^{-s}(\Omega)$ and $H^s_0(\Omega)$, we obtain \[\left\langle\frac{d\eta}{dt}(t),w\right\rangle+\langle \mathcal{L}^s_A(\gamma(\eta(t))-g(t)),w\rangle=\langle f(t),w\rangle\quad\text{ for a.e. }t\text{ for all }w\in H^s_0(\Omega).\]

Then, integrating with respect to time and using the regularity of $\eta$ and its initial condition, we have,  \begin{equation}\label{Equivalence0}\int_\Omega \eta(t)w+\int_0^t\int_{\mathbb{R}^d} AD^s(\vartheta)\cdot D^sw=\int_0^t\int_\Omega fw+\int_\Omega \eta_0w\end{equation} for almost all $t\in[0,T]$ and $w\in H^s_0(\Omega)$ by recalling that $\int_0^t\int_{\mathbb{R}^d} AD^sg\cdot D^sw=0$ for all $w$. 
We write $\eta(t)=b(\vartheta(t))+\lambda\chi(t)$ for a.e. $t$ for $\lambda>0$ and $b$ a given continuous and increasing function (see Figure \ref{fig:EnthalpyGraph}). Then, denoting
\[\Theta(t)=\int_0^t\vartheta(\tau)\,d\tau \quad \text{ and }\quad  \mathfrak{F}(t)=\int_0^tf(\tau)\,d\tau,\] we observe that $b(\vartheta(t))=b\left(\frac{\partial\Theta}{\partial t}(t)\right)\in L^2(\Omega)$ a.e. $t$. On the other hand, since $H(r)$ is the subdifferential of the convex function $r^+$, we have the inequality \begin{equation}\label{signfuncineq}s\chi\leq(r+s)^+-r^+.\end{equation} So, we obtain from \eqref{Equivalence0} the nonlocal variational inequality \begin{equation}\label{Equivalence2}\int_\Omega b\left(\frac{\partial\Theta}{\partial t}(t)\right)w+\int_{\mathbb{R}^d} AD^s\Theta(t)\cdot D^sw+\int_\Omega \lambda\left(\frac{\partial\Theta}{\partial t}(t)+w\right)^+\geq\int_\Omega \lambda\left(\frac{\partial\Theta}{\partial t}(t)\right)^++\int_\Omega (\mathfrak{F}(t)+\eta_0)w\end{equation} for all $w\in H^s_0(\Omega)$ for a.e. $t$. 

By Theorem \ref{VarStefanThm1}, $\vartheta-g\in L^2(0,T;H^s_0(\Omega))$, so $\Theta$ satisfies \begin{equation}\label{Equivalence1}\Theta\in H^1(0,T;H^s(\mathbb{R}^d)),\quad\Theta(0)=0,\quad\text{ and }\Theta(t)-\int_0^tg(\tau)\,d\tau=0 \text{ in }\Omega^c\text{ for a.e. }t,\end{equation} and defining \[\mathbb{K}(t):=H^s_0(\Omega)+g(t)\text{ for a.e. } t\in]0,T[,\]
from \eqref{Equivalence2} with $w=\tilde{w}(t)-\frac{\partial\Theta}{\partial t}(t)$, where $\tilde{w}(t)\in \mathbb{K}(t)$, we obtain, for almost every $t$, \begin{multline}\label{Equivalence3}\int_\Omega b\left(\frac{\partial\Theta}{\partial t}\right)\left(\tilde{w}-\frac{\partial\Theta}{\partial t}\right)+\int_{\mathbb{R}^d} AD^s\Theta\cdot D^s\left(\tilde{w}-\frac{\partial\Theta}{\partial t}\right)+\int_\Omega \lambda\tilde{w}^+-\int_\Omega \lambda\left(\frac{\partial\Theta}{\partial t}\right)^+\\\geq\int_\Omega (\mathfrak{F}(t)+\eta_0)\left(\tilde{w}-\frac{\partial\Theta}{\partial t}\right), \quad\forall \tilde{w}(t)\in\mathbb{K}(t),\end{multline}
which corresponds to the variational inequality formulations of Duvaut and Fr\'emond (see \cite{damlamian1976thesis}, \cite{TarziaThesis}, \cite{TarziaDuvaut1} and \cite{RodriguesVarMethods1994b}). With the same assumptions on $f$, $\tilde{g}$ and $\eta_0$, we can obtain a solution $\Theta$ to \eqref{Equivalence3}, \eqref{Equivalence1} using the Faedo-Galerkin method (refer to \cite{TarziaDuvaut1} or Chapter 3 of \cite{RodriguesVarMethods1994b} for a proof starting from the variational inequality formulation \eqref{Equivalence3}, using the special basis of Appendix \ref{Sect:Eigen}. A similar result can also be obtained using the Rothe method (refer to Section 3.1 of \cite{VisintinStefan}).

Similarly, for the one phase problem we can also obtain an equivalent variational inequality formulation, now of obstacle type. Indeed, governed by $\gamma^\omicron $, the weak temperature solution $\vartheta^\omicron $ obtained in (\ref{FracStefan}$_{1ph}$) is non-negative at all times $t\in[0,T]$. Therefore, its primitive \[\Theta^\omicron (t)=\int_0^t\vartheta^\omicron (\tau)\,d\tau\] is also always non-negative, and satisfies \begin{equation}\tag{\ref{Equivalence1}$^\omicron $}\Theta^\omicron \in H^1(0,T;H^s(\mathbb{R}^d)),\quad\Theta^\omicron (0)=0\text{ and } \Theta^\omicron (t)\geq0,\quad\Theta^\omicron (t)-\int_0^tg(\tau)\,d\tau=0 \text{ in }\Omega^c \text{ for a.e. }t\in]0,T[,\end{equation} and from \eqref{Equivalence0}, denoting $\chi_\omicron\in H(\vartheta^\omicron)$, \begin{equation}\tag{\ref{Equivalence0}$^\omicron $}\int_\Omega b\left(\frac{\partial\Theta^\omicron }{\partial t}(t)\right)w+\int_{\mathbb{R}^d} AD^s\Theta^\omicron (t)\cdot D^sw+\int_\Omega \lambda\chi_\omicron (t)w=\int_\Omega \mathfrak{F}(t)w+\int_\Omega \eta_0w,\quad\text{ for a.e. }t,\forall w\in H^s_0(\Omega).\end{equation} 

Now introduce \[\mathbb{K}^+(t):=\left\{v\in H^s(\mathbb{R}^d):v\geq0\text{ a.e. in }\Omega,v=\int_0^tg(\tau)\,d\tau\text{ in }\Omega^c\right\}, \text{ for a.e. } t\in]0,T[.\] 
Assuming that $\chi_{\{\vartheta^\omicron(t) >0\}}=\chi_{\{\Theta^\omicron(t)>0\}}$ and $\chi_{\{\vartheta^\omicron(t) <0\}}=\chi_{\{\Theta^\omicron(t)<0\}}$ for a.e. $t\in]0,T[$, we can once again make use of the inequality \eqref{signfuncineq} to obtain \[\lambda\chi_\omicron (v-\Theta^\omicron )\leq \lambda(v^+-{\Theta^\omicron}^+)=\lambda(v-\Theta^\omicron )\] when $v(t),\Theta^\omicron (t)\geq0$. Therefore, we can rewrite the equation (\ref{Equivalence0}$^\omicron $) with $w=v-\Theta^\omicron (t)$ for $v\in \mathbb{K}^+(t)$ as a variational inequality to obtain the following evolutionary obstacle-type problem for \, $\Theta^\omicron (t)\in\mathbb{K}^+(t)$: \[\int_\Omega b\left(\frac{\partial\Theta^\omicron }{\partial t}(t)\right)(v-\Theta^\omicron (t))+\int_{\mathbb{R}^d} AD^s\Theta^\omicron (t)\cdot D^s(v-\Theta^\omicron (t))\geq\int_\Omega (\mathfrak{F}(t)+\eta_0-\lambda)(v-\Theta^\omicron (t))\quad\forall v\in\mathbb{K}^+(t).\] 
This corresponds to the nonlocal version of the parabolic variational inequality obtained by Duvaut \cite{Duvaut1973} for the one-phase Stefan problem for the classical case $s=1$. See also \cite{RodriguesStefanRevisited}, \cite{RodriguesVarMethods1994b} or \cite{VisintinStefan}.

\section{Appendix - Dependence of Eigenfunctions of $\mathcal{L}_A^s$ on $0<s\leq1$}\label{Sect:Eigen}

Here we show the continuity of the eigenfunctions of $\mathcal{L}_A^s$ with respect to the parameter $s$, $0<s\leq1$. A similar result on $s\nearrow 1$ can be found in Theorem 1.2 of \cite{brasco2015stability} for the nonlocal $p$-Laplacian and Theorem 3.1 of \cite{BonderGammaConvgEigenNonlocal} for other nonlocal operators.

Recalling the compact embeddings $H^1_0(\Omega)\hookrightarrow H^s_0(\Omega)\hookrightarrow H^\sigma_0(\Omega)\hookrightarrow L^2(\Omega)$ for the bounded open set $\Omega\subset\mathbb{R}^d$, with Lipschitz boundary, where $0<\sigma<s<1$, consider the operator $T^s:L^2(\Omega)\to H^s_0(\Omega)\hookrightarrow L^2(\Omega)$, which depends on $s$, defined by $u^s=T^s(h)\in H^s_0(\Omega)$ corresponding to the homogeneous Dirichlet condition: \begin{equation}\label{A3HomogPb}u^s\in H^s_0(\Omega):\quad\langle\mathcal{L}_A^su^s,v\rangle=\int_{\mathbb{R}^d} AD^su^s\cdot D^sv=\int_\Omega hv, \quad\forall v\in H^s_0(\Omega).\end{equation} Then, by the Poincar\'e inequality, we have \begin{equation}\label{A3}\norm{u^s}_{L^2(\Omega)}^2\leq \frac{C_P}{s}\norm{D^su^s}_{L^2(\mathbb{R}^d)^d}^2\leq \frac{C_P}{sa_*}\langle\mathcal{L}_A^su^s,u^s\rangle\leq\frac{C_P}{sa_*}\int_\Omega hu^s\leq\frac{C_P}{sa_*}\norm{h}_{L^2(\Omega)}\norm{u^s}_{L^2(\Omega)}.\end{equation} Therefore, for $\sigma<s$, \[\norm{T^s}=\sup_{\norm{h}_{L^2(\Omega)}\leq1}\norm{T^s(h)}_{L^2(\Omega)}=\sup_{h\in L^2(\Omega)} \frac{\norm{u^s}_{L^2(\Omega)}}{\norm{h}_{L^2(\Omega)}}\leq\frac{C_P}{sa_*}\leq\frac{C_P}{\sigma a_*}.\]

By the estimate \eqref{A3}, for $\sigma\leq s\to r\leq1$, $u^s$ converges strongly to some $u^*$ in $L^2(\Omega)$. As argued in Section 3.2 of \cite{FracObsRiesz}, $\norm{D^s u^s}_{L^2(\mathbb{R}^d)^d}\leq C$ for some constant $C$ independent of $s$. Therefore, \[D^su^s\xrightharpoonup[s\to r]{}\zeta\quad \text{ in }L^2(\mathbb{R}^d)^d\text{-weak}\] for some $\zeta$.

Now, for all $\Phi\in C_c^\infty(\mathbb{R}^d)^d$, for $s\to r$ \[D^s\cdot\Phi\to D^r\cdot\Phi\quad\text{ in }L^2(\mathbb{R}^d)^d,\] therefore \[\int_{\mathbb{R}^d} D^su^s\cdot\Phi=-\int_{\mathbb{R}^d} u^s(D^s\cdot\Phi)\xrightarrow[s\to r]{}-\int_{\mathbb{R}^d} u^*(D^r\cdot\Phi).\]  But by the a priori estimate on $D^su^s$, \[\left|\int_{\mathbb{R}^d} D^su^s\cdot\Phi\right|\leq C\norm{\Phi}_{L^2(\mathbb{R}^d)^d},\] which implies that \[\left|\int_{\mathbb{R}^d} u^*(D^r\cdot\Phi)\right|\leq C\norm{\Phi}_{L^2(\mathbb{R}^d)^d}\quad \forall\Phi\in C_c^\infty(\mathbb{R}^d)^d.\] This means that $D^ru^*\in L^2(\mathbb{R}^d)^d$, and since $\Omega$ has a Lipschitz boundary, $u^*\in H^r_0(\Omega)$.

Furthermore, since $D^s\cdot \Phi\to D^r \cdot \Phi$ strongly in $L^2(\mathbb{R}^d)^d$ as $s\to r$, so \[\int_{\mathbb{R}^d} D^s(u^s-u^*)\cdot\Phi=-\int_{\mathbb{R}^d}(u^s-u^*)(D^s\cdot\Phi)\to0\quad \forall\Phi\in C_c^\infty(\mathbb{R}^d)^d,\] therefore \[\zeta=w-\lim_{s\to r}D^su^s=D^ru^*\in L^2(\mathbb{R}^d)^d.\]

Taking test functions $\varphi\in C_c^\infty(\Omega)$,
\[\int_{\mathbb{R}^d} AD^ru^*\cdot D^r\varphi=\lim_{s\to r}\int_{\mathbb{R}^d} AD^su^s\cdot D^s\varphi=\lim_{s\to r}\int_\Omega h\varphi=\int_\Omega  h\varphi\quad\forall \varphi\in C_c^\infty(\Omega).\] Extending this by density to all test functions $v\in H^r_0(\Omega)$, by the uniqueness of the solution to the homogeneous Dirichlet boundary problem \eqref{A3HomogPb} with $s=r\leq1$, we have that $u^*=u^r$. Therefore, for every $h\in L^2(\Omega)$, $T^s(h)$ converges to $T^r(h)$ in $L^2(\Omega)$ as $s\to r$.

\begin{theorem}
Let $0<\sigma\leq s, r \leq1$. For the sequence of operators $T^s:L^2(\Omega)\to L^2(\Omega)$ given above, $T^s$ converges to $T^r$ strongly in the operator norm as $s\to r$.
\end{theorem}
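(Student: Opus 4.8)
The plan is to upgrade the pointwise convergence $T^s(h)\to T^r(h)$ established in the paragraphs above to convergence in the operator norm $\norm{T^s-T^r}=\sup_{\norm{h}_{L^2(\Omega)}\le 1}\norm{(T^s-T^r)(h)}_{L^2(\Omega)}$, by a compactness-and-contradiction argument that exploits the uniform lower bound $s\ge\sigma$ together with the compactness of the single fixed embedding $H^\sigma_0(\Omega)\hookrightarrow L^2(\Omega)$. The point is that mere strong (pointwise) convergence of operators does not suffice; it is the collective compactness furnished by $\sigma$ that allows the two limits to be identified uniformly in the data.

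First I would argue by contradiction. Suppose $\norm{T^s-T^r}\not\to0$ as $s\to r$. Then there exist $\epsilon>0$, a sequence $s_n\to r$ with $\sigma\le s_n\le1$, and normalised $h_n\in L^2(\Omega)$, $\norm{h_n}_{L^2(\Omega)}=1$, with $\norm{T^{s_n}(h_n)-T^r(h_n)}_{L^2(\Omega)}\ge\epsilon$ for all $n$. By weak sequential compactness of the unit ball, after extracting a subsequence I may assume $h_n\rightharpoonup h$ weakly in $L^2(\Omega)$. Writing $u^{s_n}=T^{s_n}(h_n)$, the estimate \eqref{A3} gives $\norm{u^{s_n}}_{L^2(\Omega)}\le\frac{C_P}{\sigma a_*}$ and $\norm{D^{s_n}u^{s_n}}_{L^2(\mathbb{R}^d)^d}^2\le\frac{1}{a_*}\int_\Omega h_nu^{s_n}\le C$, both uniformly in $n$. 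By Lemma \ref{sContDepStefan}, $\norm{D^\sigma u^{s_n}}_{L^2(\mathbb{R}^d)^d}\le c_\sigma\norm{D^{s_n}u^{s_n}}_{L^2(\mathbb{R}^d)^d}$ is uniformly bounded, so $(u^{s_n})$ is bounded in $H^\sigma_0(\Omega)$; the compact embedding $H^\sigma_0(\Omega)\hookrightarrow L^2(\Omega)$ then yields a further subsequence with $u^{s_n}\to u^*$ strongly in $L^2(\Omega)$ and $D^{s_n}u^{s_n}\rightharpoonup\zeta$ weakly in $L^2(\mathbb{R}^d)^d$.

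Repeating verbatim the identification already carried out above for the pointwise case (testing against $\Phi\in C_c^\infty(\mathbb{R}^d)^d$ and using $D^{s_n}\cdot\Phi\to D^r\cdot\Phi$ strongly) gives $\zeta=D^ru^*$ and $u^*\in H^r_0(\Omega)$. I would then pass to the limit in the weak formulation \eqref{A3HomogPb} for a fixed $\varphi\in C_c^\infty(\Omega)\subset H^{s_n}_0(\Omega)$: since $A$ is bounded, the weak-strong pairing gives $\int_{\mathbb{R}^d}AD^{s_n}u^{s_n}\cdot D^{s_n}\varphi\to\int_{\mathbb{R}^d}AD^ru^*\cdot D^r\varphi$, while $\int_\Omega h_n\varphi\to\int_\Omega h\varphi$ by the weak convergence of $h_n$. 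By density in $H^r_0(\Omega)$ and uniqueness of the solution of \eqref{A3HomogPb} with $s=r$, this identifies $u^*=T^r(h)$. To close the contradiction, since $T^r$ is compact and $h_n\rightharpoonup h$, one has $T^r(h_n)\to T^r(h)$ strongly in $L^2(\Omega)$, whence $\norm{u^{s_n}-T^r(h_n)}_{L^2(\Omega)}\to\norm{u^*-T^r(h)}_{L^2(\Omega)}=0$, contradicting the lower bound $\epsilon$.

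The main obstacle is precisely the uniform compactness step: although each $u^{s_n}$ naturally lives in a different space $H^{s_n}_0(\Omega)$, one must precompactify the whole family in $L^2(\Omega)$ independently of $n$. Lemma \ref{sContDepStefan} is what makes this work, transferring the uniform gradient bound into a uniform $H^\sigma_0(\Omega)$-bound so that a single compact embedding $H^\sigma_0(\Omega)\hookrightarrow L^2(\Omega)$ produces strong $L^2$-precompactness. The complementary role of the compactness of $T^r$ is to handle the merely weakly convergent data $h_n$ and force $u^*$ and $\lim_n T^r(h_n)$ to coincide; once both ingredients are in place the contradiction is immediate.
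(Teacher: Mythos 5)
Your proof is correct, but it is organised differently from the paper's. The paper does not argue by contradiction: it first shows that the supremum defining $\norm{T^s-T^r}$ is \emph{attained} at some $h^s$ in the unit ball (using that $T^s$ and $T^r$ are completely continuous on $L^2(\Omega)$), then extracts a weak $L^2$-limit $h$ of the extremizers (hence strong in $H^{-\sigma}(\Omega)$ by compactness), proves the uniform operator bounds $\norm{T^s}_\sigma,\norm{T^r}_\sigma\leq c_\sigma^2/a_*$ as maps $H^{-\sigma}(\Omega)\to H^\sigma_0(\Omega)$, and concludes with the triangle-inequality splitting $T^s(h^s)-T^r(h^s)=[T^s(h)-T^r(h)]+[T^s(h^s-h)-T^r(h^s-h)]$: the first bracket is small by the already-established pointwise convergence at the fixed $h$, the second by equicontinuity in the $\sigma$-norms. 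You instead run a subsequence-contradiction argument: you get strong $L^2$-precompactness of the solutions $u^{s_n}=T^{s_n}(h_n)$ themselves (via the uniform energy bound, Lemma \ref{sContDepStefan}, and the single compact embedding $H^\sigma_0(\Omega)\hookrightarrow L^2(\Omega)$), re-run the limit identification in the weak formulation with data $h_n$ that converge only \emph{weakly} — a genuine (if mild) strengthening of the pre-theorem argument, which was stated for fixed $h$ — and close with compactness of $T^r$ to get $T^r(h_n)\to T^r(h)$. What the paper's route buys is a quantitative, extremizer-based statement reducing operator-norm convergence to ``pointwise convergence plus equicontinuity,'' reusing the pointwise result as a black box; what your route buys is that you never need the extremizer step nor the $H^{-\sigma}\to H^\sigma_0$ operator-norm computation, and the subsequence bookkeeping is cleanly justified inside the contradiction framework (the paper's assertion that the family $h^s$ itself converges as $s\to r$ is slightly informal on exactly this point). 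Both arguments ultimately rest on the same two ingredients: the uniform lower bound $s\geq\sigma$ feeding Lemma \ref{sContDepStefan}, and compactness of the embeddings associated with the fixed exponent $\sigma$.
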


\begin{proof}
We first claim that, for each fixed $s$, it is possible to find an $h^s$ in the unit ball of $L^2(\Omega)$ achieving the supremum, i.e. \[\sup_{\norm{h}_{L^2(\Omega)}\leq1}\norm{T^s(h)-T^r(h)}_{L^2(\Omega)}=\norm{T^s(h^s)-T^r(h^s)}_{L^2(\Omega)}.\] Indeed, for any maximizing sequence $\{h_m\}_m$, we can extract a subsequence which converges weakly to some $h^s$ which also belongs to the unit ball of $L^2(\Omega)$. Since the embedding from $L^2(\Omega)$ into $H^{-\sigma}(\Omega)\subset H^{-s}(\Omega)\cap H^{-r}(\Omega)$ is compact, and since $T^s$ and $T^r$ can also be considered continuous operators from $H^{-s}(\Omega)$ into $H^s_0(\Omega)$ and from $H^{-r}(\Omega)$ into $H^r_0(\Omega)$, respectively, both operators are also  completely-continuous operators in $L^2(\Omega)$, and so taking $m$ to infinity we have the conclusion.

Having obtained the sequence $\{h^s\}_s$, since they are the weak limits of a uniformly bounded sequences, there exists $h$ in the unit ball of $L^2(\Omega)$ such that $h^s$ converge weakly in $L^2(\Omega)$ and strongly in $H^{-\sigma}(\Omega)$ to $h$. Then, by Lemma \ref{sContDepStefan}, for $\sigma\leq s$, we have $\norm{u}_{H^\sigma_0(\Omega)}\leq c_\sigma\norm{u}_{H^s_0(\Omega)}$ for $u\in H^s_0(\Omega)$ and consequently \[\norm{h}_{H^{-s}(\Omega)}=\sup_{u\in H^s_0(\Omega)}\frac{\langle h,u\rangle}{\norm{u}_{H^s_0(\Omega)}}\leq c_\sigma\norm{h}_{H^{-\sigma}(\Omega)}.\] As in \eqref{A3}, if $u=T^s(f)$ with $f\in H^{-s}(\Omega)$, we obtain \[a_*\norm{u}_{H^s_0(\Omega)}^2=a_*\norm{D^su}_{L^2(\mathbb{R}^d)^d}^2\leq\langle\mathcal{L}_A^su,u\rangle=\int_\Omega fu\leq \norm{f}_{H^{-s}(\Omega)}\norm{u}_{H^s_0(\Omega)},\quad\forall f\in H^{-s}(\Omega),\] and then 
\[\norm{T^s}_{s} =\sup_{f\in H^{-s}(\Omega)}  \frac{\norm{u}_{H^s_0(\Omega)}}{\norm{f}_{H^{-s}(\Omega)}} \leq \frac{1}{a_*}\] for the operator norm $\norm{\cdot}_s$ as an operator from $H^{-s}(\Omega)$ to $H^s(\Omega)$. Therefore, it follows that
\[\norm{T^s}_{\sigma}= \sup_{f\in H^{-\sigma}(\Omega)} \frac{\norm{T^s(f)}_{H^\sigma_0(\Omega)}}{\norm{f}_{H^{-\sigma}(\Omega)}}\leq c_\sigma^2\sup_{f\in H^{-s}(\Omega)} \frac{\norm{T^s(f)}_{H^s_0(\Omega)}}{\norm{f}_{H^{-s}(\Omega)}}=c_\sigma^2\norm{T^s}_{s}\leq \frac{c_\sigma^2}{a_*}.\] Similarly, we have \[\norm{T^r}_\sigma\leq\frac{c_\sigma^2}{a_*}.\]  
Since $T^s(h)$ converges to $T^r(h)$ in $L^2(\Omega)$ for every $h\in L^2(\Omega)$, for any $\epsilon>0$, we can pick a $\delta>0$ such that, for $|s-r|\leq\delta$, we have  \[\norm{h^s-h}_{H^{-\sigma}(\Omega)}\leq\frac{\epsilon a_*}{4c_\sigma^2}\quad\text{ and }\norm{T^s(h)-T^r(h)}_{L^2(\Omega)}\leq\frac{\epsilon}{2}.\] Therefore, \begin{align*}\sup_{\norm{f}_{L^2(\Omega)}\leq1}\norm{T^s(f)-T^r(f)}_{L^2(\Omega)}&=\norm{T^s(h^s)-T^r(h^s)}_{L^2(\Omega)}\\&\leq\norm{T^s(h)-T^r(h)}_{L^2(\Omega)}+\norm{T^s(h^s-h)-T^r(h^s-h)}_{L^2(\Omega)}\\&\leq\frac{\epsilon}{2}+\left(\norm{T^s}_\sigma+\norm{T^r}_\sigma\right)\norm{h^s-h}_{H^{-\sigma}(\Omega)}\\&\leq\frac{\epsilon}{2}+\frac{2c_\sigma^2}{a_*}\frac{\epsilon a_*}{4c_\sigma^2}=\epsilon.\end{align*}

\end{proof}

As a corollary, by Theorem 2.3.1 of \cite{HenrotBook}, we have

\begin{corollary}
For the operators $T^s$, $T^r$ as given in the previous theorem, let $\lambda_k^s=\lambda_k^s(T^s)$ and $\lambda_k^r=\lambda_k^r(T^r)$be the $k$-th eigenvalues of $T^s$ and of  $T^r$ respectively for $s$ and for $r$, $0<\sigma \leq s, r\leq1$. Then, \[|\lambda_k^s-\lambda_k^r|\leq\norm{T^s-T^r}:=\sup_{\norm{f}_{L^2(\Omega)}\leq1}\norm{(T^s-T^r)(f)}.\] In particular, the map $[\sigma,1]\ni s\mapsto \lambda_k^s\in (0,\infty)$ is continuous.
\end{corollary}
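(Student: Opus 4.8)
The plan is to verify that each $T^s$ is a compact, self-adjoint and positive operator on $L^2(\Omega)$, and then to invoke the abstract eigenvalue-stability result (Theorem 2.3.1 of \cite{HenrotBook}), which bounds the difference of the $k$-th eigenvalues of two such operators by the operator norm of their difference. The continuity claim then follows at once by combining this bound with the operator-norm convergence $T^s\to T^r$ established in the preceding theorem.

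First I would record the operator-theoretic properties of the solution operator $T^s$ defined in \eqref{A3HomogPb}. Self-adjointness is a consequence of the symmetry of $A$: for $h_1,h_2\in L^2(\Omega)$, writing $u_i=T^s(h_i)$ and using the symmetric bilinear form $[\cdot,\cdot]_A$ from \eqref{Fdef} together with the defining weak formulation (testing the equation for $u_1$ against $u_2$ and vice versa), one has
\[\int_\Omega h_1\,T^s(h_2)=[u_1,u_2]_A=[u_2,u_1]_A=\int_\Omega h_2\,T^s(h_1).\]
Positivity, in fact strict positivity, follows from the coercivity \eqref{Acoer}: $\int_\Omega h\,T^s(h)=[u^s,u^s]_A\geq a_*\norm{D^su^s}_{L^2(\mathbb{R}^d)^d}^2>0$ whenever $h\neq0$. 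Finally, compactness of $T^s:L^2(\Omega)\to L^2(\Omega)$ is immediate from the compact embedding $H^s_0(\Omega)\hookrightarrow L^2(\Omega)$. Consequently $T^s$, and likewise $T^r$, is diagonalizable with a nonincreasing sequence of strictly positive eigenvalues tending to $0$, which is precisely the setting required for the eigenvalue comparison.

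Next, I would apply the min-max (Courant--Fischer) characterization that underlies Theorem 2.3.1 of \cite{HenrotBook}. The essential inequality is
\[\langle T^sf,f\rangle\leq\langle T^rf,f\rangle+\norm{T^s-T^r}\,\norm{f}_{L^2(\Omega)}^2\qquad\forall f\in L^2(\Omega),\]
which, inserted into the variational formula for the $k$-th eigenvalue of a compact self-adjoint operator (eigenvalues ordered decreasingly, so the indices of $T^s$ and $T^r$ align), yields $\lambda_k^s\leq\lambda_k^r+\norm{T^s-T^r}$; interchanging the roles of $s$ and $r$ gives the reverse inequality, hence $|\lambda_k^s-\lambda_k^r|\leq\norm{T^s-T^r}$. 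Since this is exactly the content of the cited theorem, in practice I would simply verify its hypotheses and quote it.

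The continuity of $s\mapsto\lambda_k^s$ then follows immediately: the preceding theorem gives $\norm{T^s-T^r}\to0$ as $s\to r$, so the Lipschitz-type bound forces $\lambda_k^s\to\lambda_k^r$, while the strict positivity of the eigenvalues keeps the map valued in $(0,\infty)$. I do not expect a serious obstacle here; the only points demanding care are ensuring that both operators are compared on the \emph{same} ambient Hilbert space $L^2(\Omega)$ (which is what legitimates forming the operator-norm difference directly) and that the eigenvalues are consistently ordered so that the min-max indices match on both sides.
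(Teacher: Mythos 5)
Your proposal is correct and follows essentially the same route as the paper, which proves this corollary simply by invoking Theorem 2.3.1 of \cite{HenrotBook} together with the operator-norm convergence $\norm{T^s-T^r}\to 0$ from the preceding theorem. The only difference is that you spell out what the paper leaves implicit --- the verification that $T^s$ is compact, self-adjoint and positive on $L^2(\Omega)$, and the min-max argument underlying the cited eigenvalue bound --- all of which is accurate.
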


For each eigenvalue $\lambda_k^s$, the associated eigenvector $h^s_k$ of $T^s$ such that $T^s(h^s_k)=\lambda_k^s h_k^s$. Setting $u^s_k:=T^s(h^s_k)$, we have $u_k^s=T^s(h^s_k)=\lambda_k^s h_k^s=\lambda_k^s \mathcal{L}_A^su_k^s$, so $1/\lambda_k^s$ is the eigenvalue of $\mathcal{L}_A^s$ with associated eigenvector $u_k^s$.

\begin{corollary}
Let $u_k^s$ be the corresponding eigenfunctions of $1/\lambda_k^s$ for the operator $\mathcal{L}_A^s$ for $s\in[\sigma,r]$, $0<\sigma< r\leq1$. Then, the maps  $[\sigma,1]\ni s\mapsto u_k^s\in L^2(\Omega)$ and $]\sigma,1]\ni r\mapsto u_k^r\in H^\sigma_0(\Omega)$ are also continuous.
\end{corollary}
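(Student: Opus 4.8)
The plan is to run a compactness-and-limit argument parallel to the one carried out just before the theorem, now for the normalised eigenfunctions, and then to upgrade the resulting $L^2(\Omega)$-convergence to $H^\sigma_0(\Omega)$ by a norm argument in Fourier variables. Fix $r\in\,]\sigma,1]$ and normalise $\norm{u_k^s}_{L^2(\Omega)}=1$, so that the relation $u_k^s=T^s(h_k^s)$ with $T^s(h_k^s)=\lambda_k^sh_k^s$ reads
\[\int_{\mathbb{R}^d}AD^su_k^s\cdot D^sv=\frac{1}{\lambda_k^s}\int_\Omega u_k^sv\qquad\forall\,v\in H^s_0(\Omega).\]
Because $A$ is symmetric, each $T^s$ is self-adjoint and compact on $L^2(\Omega)$ with positive eigenvalues, and the preceding corollary gives $\lambda_k^s\to\lambda_k^r>0$ as $s\to r$.

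First I would extract limits. Choosing $v=u_k^s$ and using \eqref{A3} together with the inequality \eqref{ConvergenceEq1} of Lemma \ref{sContDepStefan}, one gets $a_*\norm{D^su_k^s}_{L^2(\mathbb{R}^d)^d}^2\le 1/\lambda_k^s$ and $\norm{D^\sigma u_k^s}_{L^2(\mathbb{R}^d)^d}\le c_\sigma\norm{D^su_k^s}_{L^2(\mathbb{R}^d)^d}$, so $(u_k^s)$ is bounded in $H^\sigma_0(\Omega)$ uniformly for $s$ near $r$. Along any sequence $s\to r$ we may thus pass, up to a subsequence, to $u_k^s\rightharpoonup u^*$ weakly in $H^\sigma_0(\Omega)$ and, by the compact embedding $H^\sigma_0(\Omega)\hookrightarrow L^2(\Omega)$, strongly in $L^2(\Omega)$, whence $\norm{u^*}_{L^2(\Omega)}=1$ and $u^*\neq0$. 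Testing the eigenvalue equation against $\varphi\in C_c^\infty(\Omega)$ and passing to the limit exactly as in the argument preceding the theorem — using $D^s\varphi\to D^r\varphi$ strongly and $D^su_k^s\rightharpoonup D^ru^*$ weakly in $L^2(\mathbb{R}^d)^d$, and $\tfrac{1}{\lambda_k^s}\int_\Omega u_k^s\varphi\to\tfrac{1}{\lambda_k^r}\int_\Omega u^*\varphi$ — one finds, after extending by density to $v\in H^r_0(\Omega)$, that $\mathcal{L}_A^ru^*=\tfrac{1}{\lambda_k^r}u^*$; that is, $u^*$ is a unit eigenfunction of $\mathcal{L}_A^r$ for the eigenvalue $1/\lambda_k^r$. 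When $1/\lambda_k^r$ is simple this forces $u^*=\pm u_k^r$, and since every subsequence has a further subsequence with the same normalised limit, the whole family converges, giving continuity of $s\mapsto u_k^s$ in $L^2(\Omega)$.

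To upgrade to $H^\sigma_0(\Omega)$ it remains to prove $\norm{D^\sigma u_k^s}_{L^2(\mathbb{R}^d)^d}\to\norm{D^\sigma u_k^r}_{L^2(\mathbb{R}^d)^d}$, for then weak convergence $D^\sigma u_k^s\rightharpoonup D^\sigma u_k^r$ together with convergence of norms yields strong convergence in the Hilbert space $H^\sigma_0(\Omega)$. Writing $\norm{D^su}_{L^2(\mathbb{R}^d)^d}^2=\int_{\mathbb{R}^d}|\xi|^{2s}|\hat u(\xi)|^2\,d\xi$ as in the proof of Lemma \ref{sContDepStefan}, I would split $\int_{\mathbb{R}^d}|\xi|^{2\sigma}|\widehat{u_k^s}|^2$ at $|\xi|=R$: on $\{|\xi|\le R\}$ the weight $|\xi|^{2\sigma}$ is bounded and $\widehat{u_k^s}\to\widehat{u_k^r}$ in $L^2(\mathbb{R}^d)$ (Plancherel and the $L^2$-convergence above), while on $\{|\xi|>R\}$ one has $\int_{|\xi|>R}|\xi|^{2\sigma}|\widehat{u_k^s}|^2\le R^{2(\sigma-s)}\norm{D^su_k^s}_{L^2(\mathbb{R}^d)^d}^2\le C\,R^{-2(s-\sigma)}$, which tends to $0$ as $R\to\infty$ uniformly for $s$ near $r$ since $s-\sigma$ is bounded below. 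Letting first $R\to\infty$ and then $s\to r$ gives the desired convergence of norms, hence $u_k^s\to u_k^r$ strongly in $H^\sigma_0(\Omega)$, i.e. continuity of $r\mapsto u_k^r$ in $H^\sigma_0(\Omega)$.

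The main obstacle is the possible multiplicity of the eigenvalues: the compactness step only identifies the limit $u^*$ as \emph{some} element of the eigenspace of $1/\lambda_k^r$, so to conclude convergence of the prescribed branch $u_k^s$ one must either restrict to simple eigenvalues or argue through the total spectral projections, which do converge in operator norm because $T^s\to T^r$ in operator norm (the preceding theorem); the individual eigenfunctions can then be chosen continuously within the finite-dimensional eigenspaces. A secondary point, already handled above by the Fourier splitting, is the mismatch between the differential order $s$ at which the energy bounds are available and the fixed order $\sigma$ in which convergence is claimed; the uniform lower bound $s-\sigma\ge\delta>0$ near $r$ is exactly what makes the high-frequency tails negligible uniformly in $s$.
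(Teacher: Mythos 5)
Your proposal is correct, and its core --- normalising $\norm{u_k^s}_{L^2(\Omega)}=1$, extracting the uniform bound $a_*\norm{D^su_k^s}_{L^2(\mathbb{R}^d)^d}^2\le 1/\lambda_k^s$ from coercivity and the convergence of the eigenvalues, passing to a subsequence converging weakly in $H^\sigma_0(\Omega)$ and strongly in $L^2(\Omega)$, and identifying the limit as an eigenfunction of $\mathcal{L}_A^r$ for $1/\lambda_k^r$ --- is exactly the paper's argument for the $L^2$ part. You differ in three places. First, to identify the limiting equation you re-run the weak-formulation limit against $C_c^\infty(\Omega)$ test functions, whereas the paper exploits the operator-norm convergence $\norm{T^s-T^r}\to0$ it has just proved: $h_k^s=\frac{1}{\lambda_k^s}u_k^s\to h_k^*$ strongly in $L^2(\Omega)$ forces $T^s(h_k^s)\to T^r(h_k^*)$, so $h_k^*$ is an eigenvector of $T^r$ with eigenvalue $\lambda_k^r$; both routes work, the paper's being shorter given its preceding theorem. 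Second, for the $H^\sigma_0(\Omega)$ continuity the paper argues softly: for $s$ near $r>\sigma$ the family is bounded in $H^{\sigma'}_0(\Omega)$ for some $\sigma<\sigma'<r$, and the compact embedding $H^{\sigma'}_0(\Omega)\hookrightarrow H^\sigma_0(\Omega)$ together with the already-identified $L^2$ limit gives strong $H^\sigma_0(\Omega)$ convergence; your Fourier splitting at $|\xi|=R$ with the uniform tail bound $CR^{-2(s-\sigma)}$ is a quantitative version of the same idea (both hinge on $s-\sigma$ being bounded below near $r$) and is equally valid. Third --- and this is to your credit --- you explicitly flag the multiplicity/sign obstruction: the compactness step only places the limit in the eigenspace of $1/\lambda_k^r$, so identifying it with the prescribed branch $u_k^r$ requires simplicity, a coherent sign convention, or convergence of the spectral projections (which does follow from $\norm{T^s-T^r}\to0$). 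The paper's own proof silently asserts $h_k^*=h_k^r$ at precisely this point, so your treatment is, if anything, more careful than the printed argument.
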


\begin{proof}
Since $\lambda_k^s$ converges, so does $1/\lambda_k^s$. Therefore, \[a_*\norm{D^su_k^s}_{L^2(\mathbb{R}^d)^d}^2\leq\langle\mathcal{L}_A^su_k^s,u_k^s\rangle=\frac{1}{\lambda_k^s}\norm{u_k^s}_{L^2(\Omega)}^2.\] Normalizing by $\norm{u_k^s}_{L^2(\Omega)}=1$, the convergence of the eigenvalues gives \[a_*\norm{D^su_k^s}_{L^2(\mathbb{R}^d)^d}^2\leq\left(\frac{1}{\lambda_k^s}-\frac{1}{\lambda_k^r}\right)+\frac{1}{\lambda_k^r}\leq1+\frac{1}{\lambda_k^r}\] for $|r-s|$ sufficiently small and for $r\leq1$ and $k$ fixed. This means that the $H^s_0(\Omega)$ norm of $u_k^s$ is bounded, so by compactness, there exists a sequence $\{s_n\}_{n\in\mathbb{N}}$ with $s_n\to r$ such that the corresponding sequence of eigenfunctions $\{u_k^{s_n}\}_{n\in\mathbb{N}}$ converges weakly in $H^\sigma_0(\Omega)$ and strongly in $L^2(\Omega)$ to some $u^*_k$ for each $k$. This $u^*_k$ corresponds to a $h^*_k=\frac{1}{\lambda_k^r}u^*_k$ which is the limit of $h^s_k$, where $h^s_k$ satisfies $T^s(h^s_k)=\lambda^s_kh^s_k$. Since $\lambda^s_k\to\lambda^r_k$, $h^s_k=\frac{1}{\lambda_k^s}u^s_k$ converges to $h^*_k=\frac{1}{\lambda_k^r}u^*_k$ strongly in $L^2(\Omega)$ as $s\to r$, and by the convergence of the operator norm $T^s\to T^r$, \[T^s(h^s_k)\to T^r(h^*_k)\quad\text{ and }\quad\lambda^s_k\to\lambda^r_k \quad\text{ as }s\to r.\] Now, by the definition, the image of $T^r$ lies in $H^r_0(\Omega)$, so $u^*_k=\lambda_k^rh^*_k=T^r(h^*_k)\in H^r_0(\Omega)$. Consequently, $h^*_k=h^r_k$, so $u^*_k=u^r_k$. Therefore, for every fixed $k$ and $r$, $u^s_k$ converges strongly to $u^r_k$ in $L^2(\Omega)$ as $s\to r$, with $\norm{u_k^r}_{L^2(\Omega)}=1$, which yields the continuity of the map $[\sigma,1]\ni s\mapsto u_k^s\in L^2(\Omega)$. Since $r>\sigma$, by the compactness of the inclusion $H^{\sigma'}_0(\Omega)\hookrightarrow H^\sigma_0(\Omega)$ for all $\sigma'>\sigma$, we also have the continuity of the map $]\sigma,1]\ni r\mapsto u_k^r\in H^\sigma_0(\Omega)$.

\end{proof}

\section*{}

\noindent \textbf{Acknowledgements.} 
C. Lo acknowledges the FCT PhD fellowship in the framework of the LisMath doctoral programme at the University of Lisbon. The research of J. F. Rodrigues was partially done under the framework of the Project PTDC/MATPUR/28686/2017 at CMAFcIO/ULisboa. We would also like to thank the referees for their insightful comments.

\bibliographystyle{plain}
\bibliography{ref.bib}

\end{document}